\newcommand\reallywidehat[1]{%
\savestack{\tmpbox}{\stretchto{%
  \scaleto{%
    \scalerel*[\widthof{\ensuremath{#1}}]{\kern-.6pt\bigwedge\kern-.6pt}%
    {\rule[-\textheight/2]{1ex}{\textheight}}
  }{\textheight}%
}{0.5ex}}%
\stackon[1pt]{#1}{\tmpbox}%
}
\theoremstyle{definition}
\newtheorem{definition}{Definition}
\theoremstyle{theorem}
\newtheorem{proposition}[definition]{Proposition}
\newtheorem{lemma}[definition]{Lemma}
\newtheorem{theorem}[definition]{Theorem}
\newtheorem{corollary}[definition]{Corollary}
\numberwithin{equation}{section}
\numberwithin{definition}{section}
\theoremstyle{remark}
\newtheorem{remark}[definition]{Remark}
\newtheorem{question}[definition]{Question}
\newtheorem{example}[definition]{Example}
\newtheorem{extendedremark}[definition]{Extended remark}
\def\PP{\mathbb{P}}
\def\HH{\mathcal{H}}
\def\KK{\mathcal{K}}
\def\pr{\mathrm{pr}}
\def\BB{\mathcal{B}}
\def\BBB{\mathsf{B}}
\def\AA{\mathcal{A}}
\def\GG{\mathcal{G}}
\def\FF{\mathcal{F}}
\def\FFF{\mathsf{F}}
\def\dd{\mathrm{d}}
\def\LLL{\mathrm{L}}
\def\JJ{\mathcal{J}}
\def\at{\mathrm{at}}
\def\LL{\mathcal{L}}
\def\I{\mathrm{I}}
\def\Exp{\mathrm{Exp}}
\def\HHH{\mathsf{H}}
\def\TT{\mathcal{T}}
\def\sym{\Circled{s}}
\DeclareMathOperator*{\esssup}{ess\,sup}
\DeclareMathOperator*{\stronglim}{s-lim\,}
\begin{document}
\title{Fock structure of complete Boolean algebras of type I factors and of unital factorizations}

\author{Matija Vidmar}
\address{Department of Mathematics, Faculty of Mathematics and Physics, University of Ljubljana}
\email{matija.vidmar@fmf.uni-lj.si}

\begin{abstract}
The  factorizable vectors of a complete Boolean algebra of type I factors, acting on a separable Hilbert space, are shown to be total, resolving a conjecture of Araki and Woods. En route, the spectral theory of noise-type Boolean algebras of Tsirelson is cast in the noncommutative language of  ``factorizations with unit'' for which a muti-layered characterization of  being ``of Fock type'' is provided.
\end{abstract}

\thanks{Support from the  Slovenian Research and Innovation Agency (ARIS) under programmes Nos. P1-0402 \& P1-0448 is acknowledged. The author thanks an anonymous Referee for valuable suggestions in terms of exposition and especially for saving him from an error in (an earlier version of) the proof of Proposition~\ref{proposition:multiplicative-factorizable}.}

\keywords{Factorization; type I factor; completeness; Fock space; factorizable vector; commutative von Neumann algebra; spectrum; independence; spectral independence; stochastic noise; Boolean algebra.}

\subjclass[2020]{Primary: 46L53. Secondary: 60A99.} 

\date{\today}

\maketitle

\section{Introduction}

\subsection{Motivation, agenda, related literature}
Araki and Woods \cite{araki-woods} have studied   factorizations --- Boolean algebras of factors --- finding motivation chiefly in the work of Murray and von Neumann  \cite{murray-neumann} on direct factorizations of Hilbert spaces; and in lattices of von Neumann algebras arising in quantum theory \cite{araki,araki2,kadison-types-local,streater} (see also \cite{horuzhy} as a more recent reference). They established \cite[Theorem~4.1]{araki-woods} that a complete atomic Boolean algebra of type $\I$ factors is unitarily equivalent to the factorization that is attached canonically to a  ``discrete''   tensor product of Hilbert spaces, viz. to an incomplete/Guichardet tensor product. In the atomless case they demonstrated \cite[Theorem~6.1]{araki-woods} that  a complete  nonatomic Boolean algebra of type $\I$ factors, acting on a separable space, is unitarily equivalent to the factorization associated canonically to a ``continuous'' tensor product of Hilbert spaces, more precisely  to a symmetric Fock space over a direct integral w.r.t. an atomless measure, \emph{provided} a certain condition --- whose validity was (implicitly) conjectured but not proved --- holds, namely that the so-called factorizable vectors are total \cite[p.~210, just after Definition~6.1; p.~237, first paragraph]{araki-woods}. In both the discrete and the continuous case, or their combination, we  speak informally of the factorizations as being, briefly,  ``Fock''.

Tsirelson   \cite{tsirelson} investigates  noise-type Boolean algebras --- Boolean algebras of  sub-$\sigma$-fields of a  probability measure; called, for short, following \cite{vidmar-noise}, just noise Boolean algebras here  --- which is culmination of an extensive body of previous work on stochastic noises, see  \cite{vershik-tsirelson,tsirelson-nonclassical,picard2004lectures}  and the references therein. When the $\LLL^2$ space of the underlying probability is separable it is  shown \cite[Theorem~1.1]{tsirelson} that completeness of the noise Boolean algebra implies  that the  square-integrable so-called additive integrals generate the whole $\sigma$-field of the underlying probability (which is referred to as the property of classicality), settling an open problem going back to Feldman \cite[Problem~1.9]{feldman}. 

The Boolean algebras of sub-$\sigma$-fields of ``commutative'' (classical) probability are related naturally to a special case of the Boolean algebras of factors    from the ``noncommutative'' world of  operator algebras (the $\LLL^2$ space of the ambient probability becoming the underlying Hilbert space),  having also a ``unit''   (the constant one function  on the sample space). The two theories have enriched each other over the courses of their development. Factorizable vectors of the ``noncommutative'' correspond basically to square-integrable so-called multiplicative integrals of the ``commutative''. The square-integrable multiplicative integrals generate the same $\sigma$-field as do the additive ones \cite[Corollary~9.20]{vidmar-noise} --- the so-called stable $\sigma$-field --- them generating everything is thus in parallel to the totality of the factorizable vectors, and it is equivalent to the associated ``factorization with unit'' --- ``unital factorization'' for short --- being ``of Fock type'' \cite[Subsection~8.2]{vidmar-noise}. 

Any L\'evy process --- any process with stationary independent increments --- gives rise canonically to a classical atomless noise Boolean algebra and hence to a unital factorization of Fock type. A detailed analysis of the factorizations arising in this way is presented in \cite{Vershik2003FockFA}.  Two examples of non-classical  noise Boolean algebras, having however a non-trivial ``classical part'', come from  Warren's  noises of splitting \cite{warren}, and of stickiness (``made by a Poisson snake'') \cite{warren-sticky}. Their associated unital factorizations are not of Fock type. At the opposite side of the spectrum are the black noise Boolean algebras, whose only additive/multiplicative integrals are the constants (but the $\LLL^2$ space is not just the constants).  The most important examples of these come from the noises of coalescence \cite[Section~7]{picard2004lectures}, of the Brownian web \cite{ellis}, and of percolation \cite{ssg}. See also \cite{spectra-harris,watanabe,Jan08thenoise}. 

In this paper we, in  a manner of speaking, come full circle on the (non)commutative roundabout, by establishing that the condition for being  Fock  in the (atomless) noncommutative case --- the totality of the factorizable vectors --- follows indeed from completeness, at least in the case when the underlying Hilbert space is separable. The way this is achieved is through the study of unital factorizations which are apparently the closest noncommutative analogue of noise Boolean algebras. They are so close, in fact, that for them, as long as they are acting on a separable Hilbert space, we can delineate the conditions for their classicality --- for them being of Fock type --- in a manner that is completely on par with the commutative case (for the latter see Extended remark~\ref{extendedremark:noise-Boolean}), and which goes some way towards addressing the comment made by Tsirelson in \cite[p.~2]{tsirelson} that ``the noncommutative case is still waiting for a similar treatment''. Though, Tsirelson also succeedes in defining a noise-type completion \cite[Subsection~1.4]{tsirelson}, a natural extension of any noise Boolean algebra (of a separable probability) consisting of those $\sigma$-fields from its monotone closure that admit an independent complement; and he shows that an atomless noise Boolean \emph{sub}algebra has the same square-integrable additive integrals as does the ambient one \cite[Subsection~1.5]{tsirelson}. Analogues of these two significant findings for unital factorizations will not be pursued here.

Looking out further, this work is related to noncommutative extensions of stochastic noises \cite{HKK,kostler-speicher} on the one hand and to product systems of Hilbert spaces \cite{liebscher,arveson-fock-analogue}  on the other. In these quoted works however, the von Neumann algebras/Hilbert spaces are indexed by domains of the real line or by the positive real line, whereas here, in the spirit of \cite{araki-woods,tsirelson}, we want to treat the objects stripped to their bones, retaining only the structure that really matters for the problem at hand  (which the indexation does not). We will comment more precisely on the connections  of unital factorizations to \cite{HKK,liebscher} in Remark~\ref{remark:connection} below, but in any event the issue of characterizing completeness through the factorization being  Fock was not addressed therein.

\subsection{Outline of main results}
 Throughout this subsection  $\HH$ is a separable non-zero Hilbert space over the complex number field. 
 
 In order to be able to report on the highlights of this paper in greater detail, let us be a little bit more precise about some of the notions which we have referred to in the above, deferring full rigor and generality to later. For basic notions of von Neumann algebras we refer the reader  to, e.g., \cite{bing-ren,dixmier1981neumann}. 

  Interpreting the commutant operation as complementation, the collection of all multiples of the identity on $\HH$ as $0$, that of all bounded linear operators on $\HH$ as $1$, finally intersection as meet $\land$ and generated von Neumann algebra as join $\lor$, a factorization $\FFF$ of $\HH$ is a family of factors of $\HH$, which is a Boolean algebra under the indicated constants and operations. Such $\FFF$ is said to be of type $\I$ when all its elements are type $\I$ factors.  A factorizable vector of a factorization $\FFF$ is a non-zero element $\xi$ of $\HH$ such that the projection onto $\xi$ can be decomposed into a product of the form $XX'$ with $X\in x$, $X'\in x'$ for all $x\in \FFF$. 
  
The continuous Fock factorization associated to a direct integral $\int^\oplus \GG_s\nu(\dd s)$  of Hilbert spaces, $\nu$ standard atomless, is the factorization of the symmetric Fock space $\Gamma_s(\int^\oplus \GG_s\nu(\dd s))=\oplus_{n\in \mathbb{N}_0}\left(\int^\oplus \GG_s\nu(\dd s)\right)^{\sym n}$, comprising precisely the  factors which consist of the ``copies'' of the bounded linear operators on  $\Gamma_s(\int^\oplus_A \GG_s\nu(\dd s))$ in $\Gamma_s(\int^\oplus \GG_s\nu(\dd s))$,  $A$ running over the $\nu$-measurable sets; here copies are understood via the factorization $\Gamma_s(\int^\oplus \GG_s\nu(\dd s))=\Gamma_s(\int^\oplus_A \GG_s\nu(\dd s))\otimes \Gamma_s(\int^\oplus_{A^{\mathrm{c}}} \GG_s\nu(\dd s))$  valid up to the natural unitary equivalence coming out of the orthogonal decomposition $\int^\oplus \GG_s\nu(\dd s)=\int^\oplus_A \GG_s\nu(\dd s)\oplus \int^\oplus_{A^{\mathrm{c}}}  \GG_s\nu(\dd s)$, the copies acting as the identity on the second factor. The number $1\in \mathbb{C}=\left(\int^\oplus \GG_s\nu(\dd s)\right)^{\otimes 0}\subset \Gamma_s(\int^\oplus \GG_s\nu(\dd s))$ is referred to as the vacuum vector. A slightly different construction is more natural for a purely atomic standard $\nu$ giving a discrete Fock factorization. In general we take combination (product) of the factorizations coming out of the discrete and continuous part of a direct integral $\int^\oplus \GG_s\nu(\dd s)$, $\nu$ standard, referring to it simply as a Fock factorization. 

With the preceding notions having been introduced in rough form, a somewhat hazed panorama of the findings of this paper is as follows. 

We shall prove in Theorem~\ref{thm:main-two} and Corollary~\ref{cor:main} that 
\begin{quote}
for a complete type $\I$ factorization $\FFF$ of $\HH$ its factorizable vectors are total and such $\FFF$ is therefore unitarily isomorphic to a Fock factorization, which is continuous or discrete respectively as $\FFF$ is atomless or atomic.
\end{quote}

Two essential ingredients will be used in reaching the preceding conclusion. The first is intermediate, namely that a complete type $\I$ factorization $\FFF$ of $\HH$ always admits at least one norm one factorizable vector $\Omega$. The latter enhances $\FFF$ into the unital factorization $(\FFF,\Omega)$, $\Omega$ being the unit. 

Being now in the realm of $(\FFF,\Omega)$, the second key ingredient  in the proof of the result in display above is the implication  \ref{intro-a} $\Rightarrow$ \ref{intro-d} of what is otherwise the equivalence of the following statements, which is itself in partial summary of Theorem~\ref{thm:main-for-noise-factorization} that is a substantial finding in its own right.
\begin{enumerate}[(a)]
\item\label{intro-a} There is a complete factorization $\overline{\FFF}$, such that $\FFF\subset \overline{\FFF}$ and $(\overline{\FFF},\Omega)$ is a unital factorization.
\item\label{intro-b}   For every sequence $(x_n)_{n\in \mathbb{N}}$ in $\FFF$, $\lor_{n\in \mathbb{N}}x_n$ is a factor.
\item\label{intro-c} $(\FFF,\Omega)$ is unitarily equivalent to a unital subfactorization of a Fock factorization equipped with the vacuum vector as unit.
\item\label{intro-d} The factorizable vectors $\xi$ of $\FFF$ satisfying $\langle\Omega,\xi\rangle=1$ are total.
\end{enumerate}

To avoid too heavy repetition, we leave it in the above to the reader to guess at the (almost obvious) meanings of  ``complete'', ``unitarily equivalent'' and of  ``unital subfactorization''.

\subsection{Article structure}Section~\ref{section:setting}  delineates precisely factorizations and their factorizable vectors. Then Section~\ref{section:unital-factorizations} introduces unital factorizations, describes the more   rudimentary elements of their landscape and gives the important Fock examples. We continue in Section~\ref{section:spectrum} by considering the spectrum of a unital factorization, which is a main tool in proving the results of Section~\ref{section:fock-structure-completneess}. The latter include, but are not limited to the equivalences between \ref{intro-a}-\ref{intro-d} above, or even to those of Theorem~\ref{thm:main-for-noise-factorization}. We draw attention especially to Theorem~\ref{thm:black} concerning the analogue of ``blackness'' from the commutative world. Last but not least, Section~\ref{section:factorizations-sans} deals with factorizations per se, no a priori unit,  culminating in establishing the announced totality of the factorizable vectors in the complete type $\I$ case. 

\subsection{Miscellaneous general notation, conventions} All Hilbert spaces appearing herein are complex. Inner products are linear in the second argument, conjugate linear in the first.  For a Hilbert space $\mathcal{K}$: $e_\mathcal{K}(z):=\oplus_{n\in \mathbb{N}_0}\frac{1}{\sqrt{n!}}z^{\otimes n}$ denotes the exponential  vector of the symmetric Fock space $\Gamma_s(\mathcal{K}):=\oplus_{n\in \mathbb{N}_0}\mathcal{K}^{\sym n}$ ($\mathcal{K}^{\sym n}$ being the $n$-fold symmetric tensor product of $\KK$ with itself) associated to a $z\in \mathcal{K}$; $\alpha\KK$ is $\KK$ endowed with the scalar product $\alpha\langle \cdot,\cdot\rangle$, where $\langle \cdot,\cdot\rangle$ is the given scalar product on $\KK$, $\alpha\in (0,\infty)$; $\mathbf{1}_\KK$ is the identity on $\KK$; for $\xi\in \KK$, $\vert \xi\rangle\langle \xi\vert:=(\KK\ni z\mapsto \langle \xi,z\rangle \xi)$, which is the projection onto the one-dimensional space $\mathbb{C}\xi$ spanned by $\xi$, when $ \xi$ has norm one; $\BB(\KK)$ is the collection of all bounded linear operators on $\KK$;  $[\GG]$ denotes the orthogonal projection onto a closed linear subspace $\GG$ of $\KK$; $x':=\{X'\in \BB(\KK):X'X=XX'\text{ for all }X\in x\}$ is the commutant of an $x\subset \BB(\KK)$.

Given a measure $\mu$ on a measurable space $(S,\Sigma)$, $\mu[f]:=\int f\dd\mu$ (resp. $f\cdot \mu:=(\Sigma\ni A\mapsto \mu[f;A]:=\mu[f\mathbbm{1}_A])$) denotes the (resp. indefinite) integral of a $\Sigma$-measurable numerical $f$ against $\mu$, whenever it has significance (is well-defined); assuming $\mu$ has measurable singletons and is $\sigma$-finite, $\at(\mu):=\{s\in S:\mu(\{s\})>0\}$  and $\mu_c:=\mu\vert_{S\backslash \at(\mu)}:=\mu\vert_{\Sigma\vert_{S\backslash \at(\mu)}}$ are its collection of atoms and continuous (diffuse) part, respectively; $g_\star\mu:=(\Delta\ni A\mapsto \mu(g^{-1}(A)))$ is the push-forward of $\mu$ along a $\Sigma/\Delta$-measurable $g$. By a standard $\sigma$-finite measure we mean the completion of a $\sigma$-finite measure on a standard measurable space, possibly adjoined by an arbitrary null set (and completed again).

A partition of unity of a Boolean algebra $B$ is a finite subset $P$ of $B\backslash \{0\}$ such that $\lor P=1$ and $x\land y=0$ for $x\ne y$ from $P$ ($P$ may be empty, which happens only if $0=1$, a case that we allow).  The collection of atoms of a Boolean algebra $B$ is denoted $\at(B)$. 

A choice function on a collection of sets $P$ is a map $f$ with domain $P$ satisfying $f(p)\in p$ for all $p\in P$. For a set $A$, $2^A$ is its power set, $(2^A)_{\mathrm{fin}}:=\cup_{n\in\mathbb{N}_0}{A\choose n}$ are its finite subsets and, in case $A$ is finite, $\vert A\vert$ is its size ($\#$ of elements). $[n]:=\{1,\ldots,n\}$ ($=\emptyset$ when $n=0$) and $\mathbb{N}_{>n}:=\{n+1,n+2,\ldots\}$ for $n\in \mathbb{N}_0$,  while $\mathbb{N}_{\geq m}:=\{m,m+1,\ldots\}$ for $m\in \mathbb{N}$. 

The value of a function $f$ at a point $x$ of its domain is written as $f(x)$ or $f_x$, whichever is the more convenient. The symbol $\uparrow$ (resp. $\uparrow\uparrow$) means nondecreasing (resp. strictly increasing); analogously we interpret $\downarrow$, $\downarrow\downarrow$. We set $\inf\emptyset:=\infty$, $\max\emptyset:=0$, $\sum_\emptyset:=0$ (empty sum) and $\prod_\emptyset:=1$ (empty product), as usual.

\section{Factorizations of a Hilbert space and their factorizable vectors}\label{section:setting}

We fix henceforth  a non-zero Hilbert space $\HH$ and let 
\begin{equation*}
\hat{\HH}:=\{x\in 2^{\BB(\HH)}:x\text{ a von Neumann algebra on }\HH\}
\end{equation*} 
denote the bounded, involutive lattice of all von Neumann algebras  on $\HH$: the lattice operations
are the meet  $\land=$ intersection and join $\lor$ = the von Neumann algebra generated by the
union, involution ${}'$ is  passage to the commutant $\hat\HH\ni x\mapsto x'$, the bottom
element is the von Neumann algebra $0_\HH:=\mathbb{C}\mathbf{1}_\HH$ of the scalar multiples of the identity
operator $\mathbf{1}_\HH$, and the top element is the von Neumann algebra $1_\HH:=\BB(\HH)$ of all
bounded linear operators on $\HH$.   

\begin{remark}
The involutive properties of $\hat\HH$, namely that $x''=x$ and $(x\lor y)'=x'\land y'$ for $\{x,y\}\subset \hat\HH$, follow from the double commutant theorem; actually, $\land_{\alpha\in \mathfrak{A}}x_\alpha'=(\cup_{\alpha\in \mathfrak{A}}x_\alpha)'=(\cup_{\alpha\in \mathfrak{A}}x_\alpha)'''=((\cup_{\alpha\in \mathfrak{A}}x_\alpha)'')'=(\lor_{\alpha\in \mathfrak{A}} x_\alpha)'$ for any family $(x_\alpha)_{\alpha\in \mathfrak{A}}$ in $\hat \HH$. The commutation theorem for tensor products of von Neumann algebras \cite[Theorem~1]{commutant-tensor} entails that for non-zero Hilbert spaces $\KK$ and $\FF$ the map $\hat \KK\times \hat\FF\ni (k,f)\mapsto k\otimes f\in  \widehat{\KK\otimes \FF}$ is a homomorphism of   bounded involutive lattices. Besides,  $\{k\otimes 0_\FF:k\in\hat\KK\}=\widehat{\KK\otimes \FF}\cap 2^{1_\KK\otimes 0_\FF}$ and, for $k\in \hat{\KK}$, $k\otimes 0_\FF=\{K\otimes \mathbf{1}_\FF:K\in k\}$, the ``copy'' of $k$ in $\KK\otimes \FF$. 
 We will use this without special mention in what follows. 
\end{remark}

\subsection{Factorizations}
We remind the reader  that a factor of $\HH$ is a von Neumann algebra $x$ on $\HH$ such that $x\lor x'=1_\HH$,  equivalently $x\land x'=0_\HH$; and that it  is said to be of type $\I$ when it contains a non-zero minimal projection, which is equivalent to it being (algebraically) isomorphic to $\BB(\KK)$ for some non-zero Hilbert space $\KK$. The commutant of a factor is a factor, of course, but the meet and join of two factors in general need not be factors.  If a non-empty sublattice $\mathsf{O}\subset \hat\HH$ consisting of factors is closed for the commutant operation (is involutive), then $\mathsf{O}$ has the structure of an orthocomplemented lattice.  Insisting further on distributivity we arrive at the notion of
\begin{definition}\label{definition:factorization}
A factorization of  $\HH$ is a distributive involutive non-empty sublattice $\FFF$ of $\hat{\HH}$ consisting of factors, distributivity meaning that 
\begin{equation*}
x\land (y\lor z)=(x\land y)\lor (x\land z),\quad \{x,y,z\}\subset\FFF,
\end{equation*} or equivalently (by involution) 
\begin{equation*}
x\lor (y\land z)=(x\lor y)\land (x\lor z),\quad \{x,y,z\}\subset\FFF.
\end{equation*} In such case:
\begin{itemize}
\item $\FFF$ is of type $\mathrm{I}$ if all its elements are type $\mathrm{I}$ factors; 
\item $\FFF$ is complete if it is closed under arbitrary joins of its subfamilies;
\item the atoms of $\FFF$ are the atoms of the Boolean algebra that it is (see Remark~\ref{rmk:boolean-algebra} just below), similarly we interpret partitions of unity, atomless/nonatomic and atomic;
\item a subfactorization of $\FFF$ is a subset of $\FFF$ that is itself a factorization (of the same Hilbert space). 
\end{itemize}
Given  factorizations $\FFF_1$ and $\FFF_2$ of the  Hilbert spaces $\HH_1$ and $\HH_2$, respectively, we: 
\begin{itemize}
\item define their product  as the factorization $\FFF_1\otimes \FFF_2:=\{x_1\otimes x_2:(x_1,x_2)\in \FFF_1\times\FFF_2\}$ of $\HH_1\otimes \HH_2$; 
\item say $\FFF_1$ and $\FFF_2$ are isomorphic if there is a unitary isomorphism between $\HH_1$ and $\HH_2$ that carries $\FFF_1$ onto $\FFF_2$. %
\end{itemize}
\end{definition}
\begin{remark}\label{rmk:boolean-algebra}
 We have (implicitly) insisted that a factorization always acts on a non-zero Hilbert space; the degenerate case corresponding to a zero-dimensional space, albeit technically possible, is excluded. Interpreting $1_\HH$ as $1$, $0_\HH$ as $0$ and the commutant operation as complementation, together with the meet and join as above, a factorization of $\HH$ is a Boolean algebra. Put more succinctly then, a factorization is a Boolean algebra of factors.  If a factorization is complete, then it is closed also for arbitrary meets of its subfamilies (we see it by involution).
\end{remark}

\subsection{A certain kind of independence}\label{subsction:kind-of-independence} Throughout this subsection $\Omega\in \HH$ is a norm one vector.
In the following it would be more informative to say ``independent under the vector state $\langle\Omega,\cdot\Omega\rangle$'' but we yield to brevity of expression and say just ``independent under $\Omega$''. 

\begin{definition}\label{def:independence}
Let $\{x,y\}\subset \hat\HH$. We say $x$ and $y$ are independent under $\Omega$ when $x$ and $ y$ are commuting on $\overline{(x\lor y)\Omega}$ (= the closure of $\{Z\Omega:Z\in x\lor y\}$ in the Hilbert space topology of $\HH$) and $\langle \Omega,XY\Omega\rangle=\langle \Omega,X\Omega\rangle\langle \Omega,Y\Omega\rangle$ for all $X\in x$ and $Y\in y$. 
\end{definition}

%
%
%

\begin{example}\label{example:classical}
For a probability $\PP$ consider the Hilbert space $\LLL^2(\PP)$. Identifying the members of $\LLL^\infty(\PP)$ with their multiplication maps, $\LLL^\infty(\PP)$ is a commutative von Neumann algebra on $\LLL^2(\PP)$. Its von Neumann subalgebras are of the form $\LLL^\infty(\PP\vert_x)$ for a complete sub-$\sigma$-field $x$ of $\PP$. For two complete sub-$\sigma$-fields $x$ and $y$ of $\PP$, $\LLL^\infty(\PP\vert_x)$ and $\LLL^\infty(\PP\vert_y)$ are independent under $\mathbbm{1}$ ($:=$ the constant one function on the sample space of $\PP$) iff $x$ and $y$ are independent under $\PP$ in the usual sense. 
\end{example}
Recall that we use  $[\GG]$ to denote the orthogonal projection onto a closed linear subspace $\GG$ and that $\vert \Omega\rangle\langle\Omega\vert=[\mathbb{C}\Omega]$ is the projection onto the one-dimensional subspace spanned by $\Omega$.
\begin{proposition}\label{quantum-independence}
Let $\{x,y\}\subset \hat\HH$. The following are equivalent.
\begin{enumerate}[(i)]
\item\label{quantum-independence:A} $x$ and $y$ are independent under $\Omega$.
\item\label{quantum-independence:B} $x$ and $y$ are commuting on $\overline{(x\lor y)\Omega}$ and $\vert \Omega\rangle\langle\Omega\vert=[\overline{x\Omega}][\overline{y\Omega}]$ (and then automatically also $=[\overline{y\Omega}][\overline{x\Omega}]$).
\item\label{quantum-independence:C} There exists a (then automatically unique)  unitary isomorphism between $\overline{(x\lor y)\Omega}$ and $\overline{x\Omega}\otimes \overline{y\Omega}$ that sends $\Omega$ to $\Omega\otimes \Omega$ and  $(XY)\vert_{\overline{(x\lor y)\Omega}}$ to  $X\vert_{\overline{x\Omega}}\otimes Y\vert_{\overline{y\Omega}}$ for all $X\in x$ and all $Y\in y$ (and then automatically it sends: $\overline{x\Omega}$ onto $\overline{x\Omega}\otimes (\mathbb{C}\Omega)$ and $\overline{y\Omega}$ onto $(\mathbb{C}\Omega)\otimes \overline{y\Omega}$; also $x\vert_{\overline{(x\lor y)\Omega}}$ into $1_{\overline{x\Omega}}\otimes 0_ {\overline{y\Omega}}$ and $y\vert_{\overline{(x\lor y)\Omega}}$ into  $0_{\overline{x\Omega}}\otimes 1_{\overline{y\Omega}}$).
\end{enumerate}
\end{proposition}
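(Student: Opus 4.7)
I will establish \ref{quantum-independence:A} $\Leftrightarrow$ \ref{quantum-independence:B} by a rearrangement of inner products, then \ref{quantum-independence:B} $\Rightarrow$ \ref{quantum-independence:C} by constructing the unitary explicitly on a dense subspace, and finally \ref{quantum-independence:C} $\Rightarrow$ \ref{quantum-independence:A} by transporting the state through the isomorphism.

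For \ref{quantum-independence:A} $\Leftrightarrow$ \ref{quantum-independence:B}, the commutation hypothesis on $\overline{(x\lor y)\Omega}$ is common to both statements, so I need only compare the vector-state identity against the projection identity. Writing $P:=[\overline{x\Omega}]$ and $Q:=[\overline{y\Omega}]$, and using that $x,y$ are self-adjoint and that $P\Omega=\Omega=Q\Omega$ (because $\mathbf{1}_\HH$ belongs to both), the identity $\langle\Omega,XY\Omega\rangle=\langle\Omega,X\Omega\rangle\langle\Omega,Y\Omega\rangle$ for all $X\in x,Y\in y$ is readily seen to be equivalent to $\langle \xi,(\mathbf{1}_\HH-\vert\Omega\rangle\langle\Omega\vert)\eta\rangle=0$ for all $\xi\in\overline{x\Omega}$, $\eta\in\overline{y\Omega}$, i.e.\ to $PQ=\vert\Omega\rangle\langle\Omega\vert$; the (automatic) symmetric identity $QP=\vert\Omega\rangle\langle\Omega\vert$ follows by taking adjoints.

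For \ref{quantum-independence:B} $\Rightarrow$ \ref{quantum-independence:C}, define
\begin{equation*}
U_0:\mathrm{span}\{XY\Omega:X\in x,\,Y\in y\}\to \overline{x\Omega}\otimes\overline{y\Omega},\qquad XY\Omega\longmapsto X\Omega\otimes Y\Omega,
\end{equation*}
and verify in turn: (a) the domain is dense in $\overline{(x\lor y)\Omega}$; (b) $U_0$ is a well-defined isometry; (c) $U_0$ extends uniquely to a surjective unitary $U$; (d) $U$ intertwines $(XY)\vert_{\overline{(x\lor y)\Omega}}$ with $X\vert_{\overline{x\Omega}}\otimes Y\vert_{\overline{y\Omega}}$. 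For (a), by Kaplansky's density theorem each $Z\in x\lor y$ is the strong limit of a bounded net from the $*$-algebra generated by $x\cup y$; the commutation on $\overline{(x\lor y)\Omega}$ lets me regroup any monomial in elements of $x\cup y$, after application to $\Omega$, into a vector of the form $XY\Omega$ with $X\in x$, $Y\in y$, which renders the span dense. For (b), the same commutation gives $\langle XY\Omega,X'Y'\Omega\rangle=\langle \Omega,X^*X'Y^*Y'\Omega\rangle=\langle X^{\prime*}X\Omega,Y^*Y'\Omega\rangle$, after which \ref{quantum-independence:B} (via $P,Q$ as above) splits this into $\langle X\Omega,X'\Omega\rangle\langle Y\Omega,Y'\Omega\rangle$, which is exactly the corresponding tensor-product inner product. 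For (c), surjectivity holds because the elementary tensors $X\Omega\otimes Y\Omega$ span a dense subspace of $\overline{x\Omega}\otimes\overline{y\Omega}$; (d) is a direct rewriting using $(XY)(X'Y')\Omega=(XX')(YY')\Omega$.

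The implication \ref{quantum-independence:C} $\Rightarrow$ \ref{quantum-independence:A} is immediate: transporting $\langle\Omega,XY\Omega\rangle$ through $U$ gives $\langle\Omega\otimes\Omega,X\Omega\otimes Y\Omega\rangle=\langle\Omega,X\Omega\rangle\langle\Omega,Y\Omega\rangle$, while commutation of $X\vert_{\overline{x\Omega}}\otimes\mathbf{1}_{\overline{y\Omega}}$ with $\mathbf{1}_{\overline{x\Omega}}\otimes Y\vert_{\overline{y\Omega}}$ pulls back through $U^*$ to commutation on $\overline{(x\lor y)\Omega}$. Uniqueness of $U$ is clear because its values on a total set of vectors are prescribed, and the remaining parenthetical claims of \ref{quantum-independence:C} are read off by setting either $X$ or $Y$ to $\mathbf{1}_\HH$ in the intertwining relation. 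The main point of friction is step (a): commutation is available only on the invariant subspace $\overline{(x\lor y)\Omega}$, so one must carry out the regrouping of noncommutative monomials only after application to $\Omega$, staying inside that subspace; combining this bookkeeping with Kaplansky density is where the care is required.
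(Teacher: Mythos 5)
Your proof is correct and follows essentially the same route as the paper: the equivalence of the vector-state factorization with $[\overline{x\Omega}][\overline{y\Omega}]=\vert\Omega\rangle\langle\Omega\vert$, and the scalar-product-preserving correspondence $XY\Omega\leftrightarrow X\Omega\otimes Y\Omega$ between total sets, are exactly the paper's two ingredients, merely arranged in a different cycle of implications. Your extra care in justifying that $\mathrm{span}\{XY\Omega:X\in x,\,Y\in y\}$ is dense in $\overline{(x\lor y)\Omega}$ (regrouping monomials via the commutation on that invariant subspace) fills in a point the paper leaves implicit, and is sound.
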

\begin{proof}
The paranthetical additions of \ref{quantum-independence:B}-\ref{quantum-independence:C} are  clear as is the implication  \ref{quantum-independence:C}$\Rightarrow$\ref{quantum-independence:B}. Assuming \ref{quantum-independence:A}, the relation which associates
\begin{equation*} XY\Omega\text{ with }(X\Omega)\otimes (Y\Omega) \text{ for }(X,Y)\in x\times y
\end{equation*}
 is scalar product-preserving between total sets in $\overline{(x\lor y)\Omega}$ and $\overline{x\Omega}\otimes \overline{y\Omega}$, hence  extends uniquely to a unitary isomorphism, which gives \ref{quantum-independence:C}. If  \ref{quantum-independence:B} holds true, then for $X\in x$ and $Y\in y$, 
 \begin{equation*}\langle \Omega,XY\Omega\rangle=\langle X^*\Omega,Y\Omega\rangle=\langle [\overline{x\Omega}]X^*\Omega,[\overline{y\Omega}]Y\Omega\rangle=\langle X^*\Omega,[\overline{x\Omega}][\overline{y\Omega}]Y\Omega\rangle=\langle X^*\Omega,\Omega\rangle\langle \Omega,Y\Omega\rangle=\langle\Omega,X\Omega\rangle\langle \Omega,Y\Omega\rangle;
 \end{equation*}
  together with the assumed commutativity of $x$ and $y$ on $\overline{(x\lor y)\Omega}$ we get \ref{quantum-independence:A}. 
\end{proof}
%
The preceding may be compared with \cite[Subsection~2.2]{HKK} which features a very similar notion of independence. It is extremely well-behaved. For instance, in addition to the characterizations of Proposition~\ref{quantum-independence} we have also

\begin{proposition}\label{proposition:raise-independence} Let $x$ and $y$ be self-adjoint multiplicative subsets of $1_\HH$. Suppose $XY\Omega=YX\Omega$ and  $\langle \Omega,XY\Omega\rangle=\langle \Omega ,X\Omega\rangle\langle \Omega,Y\Omega\rangle$ for all $X\in x$ and $Y\in y$. Then $x''$ and $y''$  are independent under $\Omega$.
 \end{proposition}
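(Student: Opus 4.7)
The plan is to reduce to Proposition~\ref{quantum-independence} at the level of the unital $*$-algebras $A := \mathbb{C}\mathbf{1}_\HH + \mathrm{span}_\mathbb{C}(x)$ and $B := \mathbb{C}\mathbf{1}_\HH + \mathrm{span}_\mathbb{C}(y)$, which are $*$-subalgebras of $\BB(\HH)$ thanks to the self-adjointness and multiplicativity of $x$ and $y$. By the double commutant theorem $A'' = x''$ and $B'' = y''$, with $A$ SOT-dense in $x''$ and likewise $B$ in $y''$. The given hypotheses propagate from $(x,y)$ to $(A,B)$ by $\mathbb{C}$-bilinearity in each slot, the identity causing no trouble since $\langle\Omega,\mathbf{1}_\HH\Omega\rangle=1$ and $\mathbf{1}_\HH$ commutes with everything.

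I would next extend the factorization of the inner product from $(A,B)$ to $(x'',y'')$ by WOT continuity. Fixing $X\in A$, the functional $Y \mapsto \langle X^*\Omega,Y\Omega\rangle - \langle\Omega,X\Omega\rangle\langle\Omega,Y\Omega\rangle$ on $\BB(\HH)$ is WOT-continuous and vanishes on the WOT-dense $B$, hence on $y''$; then fixing $Y\in y''$ and repeating the argument in the other slot passes $A$ to $x''$.

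Commutativity of $x''$ and $y''$ on $\overline{(x''\lor y'')\Omega}$ is the technically heaviest step. First I would show by a short induction on word length, using $X_0Y_0\Omega = Y_0X_0\Omega$ for $(X_0,Y_0)\in A\times B$, that for every word $W_1\cdots W_n$ with each $W_i\in A\cup B$ there exist $X_0\in A$ and $Y_0\in B$ with $W_1\cdots W_n\Omega = X_0Y_0\Omega$. Letting $M$ denote the unital $*$-algebra generated by $A\cup B$, this makes $M\Omega$ equal to the linear span of $\{XY\Omega:X\in A,\,Y\in B\}$; and since $M$ is SOT-dense in $x''\lor y''=M''$, taking closures yields $\overline{(x''\lor y'')\Omega}=\overline{\mathrm{span}\{XY\Omega:X\in A,\,Y\in B\}}$. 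For $X\in A$, $Y\in B$ and $X_0\in A$, $Y_0\in B$, reordering on $\Omega$ a few times gives $XYX_0Y_0\Omega=(XX_0)(YY_0)\Omega=YXX_0Y_0\Omega$, so $XY=YX$ holds on this dense span and hence on the full closure. Finally, for $Y\in B$ fixed, the set $\{X\in x'' : (XY-YX)\vert_{\overline{(x''\lor y'')\Omega}}=0\}$ is an SOT-closed linear subspace of $x''$ containing $A$, hence equals $x''$; swapping slots promotes commutativity to all $(X,Y)\in x''\times y''$.

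The main obstacle is precisely the collapsing of arbitrary words over $A\cup B$ to $X_0Y_0$ modulo the action on $\Omega$: without it the dense subspace $\overline{(x''\lor y'')\Omega}$ is not presented in a form amenable to the bilinear commutation check above. The remaining pieces are standard density-and-continuity extensions, and together with the inner-product factorization they yield independence under $\Omega$ by Definition~\ref{def:independence}.
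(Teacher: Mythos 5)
Your proposal is correct and follows essentially the same route as the paper: the hypotheses are first raised to the linear spans (here packaged as the unital $*$-algebras $A$ and $B$) and then to $x''$ and $y''$ by SOT/WOT continuity, while the commutativity on the cyclic subspace is obtained by an induction on word length that is the same mechanism as the paper's inductive verification of \eqref{eq:raise-independence-proof}. Your reorganization of that induction into the cleaner intermediate claim that every word over $A\cup B$ collapses on $\Omega$ to the form $X_0Y_0\Omega$ is a cosmetic (and slightly tidier) variant, not a different argument.
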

In passing it may be compared with, indeed it subsumes the ``classical'' probability analogue: independence can be raised from $\pi$-systems to the $\sigma$-fields they generate (which follows by itself more easily from Dynkin's lemma).
\begin{proof}
We may and do assume $\mathbf{1}_\HH\in x\cap y$. Then, since $x$ and $y$ are multiplicative and self-adjoint, applying the double commutant theorem, we infer that $x''$, $y''$ are the strong operator closures of the linear spans of $x$, $y$, respectively. In virtue of the same token, $x''\lor y''$ is the strong operator closure of the linear span of $$\mathcal{Z}:=\cup_{n\in \mathbb{N}_0}\{X_nY_n\cdots X_1Y_1:(X_1,\ldots,X_n)\in x^n,\, (Y_1,\ldots,Y_n)\in y^n\}.$$

By linearity and continuity, since $\langle \Omega,XY\Omega\rangle=\langle \Omega ,X\Omega\rangle\langle \Omega,Y\Omega\rangle$ holds  for all $X\in x$ and $Y\in y$ by assumption, it holds: first for all $X\in x''$ and $Y\in y$; second, for all $X\in x''$ and $Y\in y''$.
 
As for the  ``commutativity property'', we verify that
\begin{equation}\label{eq:raise-independence-proof}
XY(X_nY_n\cdots X_1Y_1\Omega)=YX(X_nY_n\cdots X_1Y_1\Omega)\text{ whenever } \{X,X_1,\ldots,X_n\}\subset x,\, \{Y,Y_1,\ldots,Y_n\}\subset y,
\end{equation}
holds for all $n\in \mathbb{N}_0$, which we see inductively: \eqref{eq:raise-independence-proof} is true for $n=0$ (to wit, $XY\Omega=YX\Omega$ for all $X\in x$, all $Y\in y$) by assumption; if \eqref{eq:raise-independence-proof} holds for a given $n\in \mathbb{N}_0$, then for arbitrary $ \{X,X_1,\ldots,X_{n+1}\}\subset x$ and $\{Y,Y_1,\ldots,Y_{n+1}\}\subset y$ we deduce
 \begin{align*}
 XY(X_{n+1}Y_{n+1}\cdots X_1Y_1\Omega)&=X(YY_{n+1})X_{n+1}(X_nY_n\cdots X_1Y_1\Omega)\\
 &=YY_{n+1}(XX_{n+1})(X_nY_n\cdots X_1Y_1\Omega)\\
  &=YX(X_{n+1}Y_{n+1}X_nY_n\cdots X_1Y_1\Omega),
 \end{align*}
 so \eqref{eq:raise-independence-proof} holds true for $n+1$ also.  \eqref{eq:raise-independence-proof} being true for all $n\in \mathbb{N}_0$ means that any $X\in x$ and any $Y\in y$ are commuting on $\mathcal{Z}\Omega$; therefore, by continuity and linearity they are commuting on the closure of the linear span of $\mathcal{Z}\Omega$, i.e. on $\overline{(x''\lor y'')\Omega}$. Yet again by linearity and continuity we find in turn that $x''$ and $y$ are commuting on $\overline{(x''\lor y'')\Omega}$, and then finally that $x''$ and $y''$ are commuting on $\overline{(x''\lor y'')\Omega}$.
\end{proof}

\begin{example}
Returning to Example~\ref{example:classical}, for independent complete sub-$\sigma$-fields $x$ and $y$ of $\PP$, the unitary isomorphism of Proposition~\ref{quantum-independence}\ref{quantum-independence:C} associated to $\LLL^\infty(\PP\vert_x)$ and $\LLL^\infty(\PP\vert_y)$ on $ \HH=\LLL^2(\PP)$ and with $\Omega=\mathbbm{1}$ is just the natural unitary equivalence between $\LLL^2(\PP\vert_{x\lor y})$ and $\LLL^2(\PP\vert_x)\otimes\LLL^2(\PP\vert_y)$ sending $fg$ to $f\otimes g$ for $(f,g)\in\LLL^2(\PP\vert_x)\times\LLL^2(\PP\vert_y)$.
\end{example}

\subsection{Factorizable vectors}\label{subsection:factorizable}
We now specialize Proposition~\ref{quantum-independence} to $y=x'$, $x$ a factor of $\HH$, and elaborate on it  further in that case. Ultimately, a non-zero vector $\omega\in \HH$ shall be called factorizable relative to a factorization $\FFF$ when  its normalization $\Omega:=\frac{\omega}{\Vert\omega\Vert}$ satisfies one (and then all) of the conditions of the ensuing Proposition~\ref{proposition:factorizable-myriad} \emph{for all} factors $x$ belonging to $ \FFF$.  

A non-trivial result shall be used in the proof of Proposition~\ref{proposition:factorizable-myriad}, namely: \begin{quote}
$(\otimes)$ For a type $\I$ factor $x$ of $\HH$ there is, up to unitary isomorphism, a tensor product decomposition $\HH=\GG\otimes \GG'$ into non-zero Hilbert spaces $\GG$ and $\GG'$ so that $x=1_{\GG}\otimes 0_{\GG'}$ and (hence) $x'=0_\GG\otimes 1_{\GG'}$. \label{otimes-page}
\end{quote}
We refer to  \cite[combine: Definitions~2.3.1,~ 3.1.1 and~3.2.1; Theorem~IV on p.~150; Lemmas~5.3.1,~5.4.1]{murray-neumann}, or  \cite[Proposition~I.2.5, taking for $A$ a type $\I$ factor and for $(E_i)_{i\in I}$ the minimal non-zero projections of $A$]{dixmier1981neumann}, or \cite[Proposition~I.5.6, taking for $M$ a type $\I$ factor and for $(p_l)_{l\in \Lambda}$ the minimal non-zero projections of $M$]{bing-ren} for $(\otimes)$.

\begin{proposition}\label{proposition:factorizable-myriad}
Let $\Omega$ be a norm one vector and $x$ a factor of $\HH$. The following are equivalent.
\begin{enumerate}[(i)]
\item\label{proposition:factorizable-myriad:i} $x$ and $x'$ are independent under $\Omega$, i.e.   $\langle\Omega,XX'\Omega\rangle=\langle\Omega,X\Omega\rangle\langle\Omega,X'\Omega\rangle$ for all $X\in x$, $X'\in x'$.
\item\label{proposition:factorizable-myriad:ii} $\vert\Omega \rangle\langle\Omega\vert=[\overline{x'\Omega}][\overline{x\Omega}]$ (and then necessarily also $=[\overline{x\Omega}][\overline{x'\Omega}]$). 
\item\label{proposition:factorizable-myriad:iii} There exists a  unitary isomorphism between $\HH$ and $\overline{x\Omega}\otimes \overline{x'\Omega}$ that sends $\Omega$ to $\Omega\otimes \Omega$ and  $XX'$ to  $X\vert_{\overline{x\Omega}}\otimes X'\vert_{\overline{x'\Omega}}$ for all $X\in x$ and all $X'\in x'$ (and then necessarily: the unitary isomorphism in question is unique; $\overline{x\Omega}$ is sent by it onto $\overline{x\Omega}\otimes (\mathbb{C}\Omega)$ and $\overline{x'\Omega}$ onto $(\mathbb{C}\Omega)\otimes \overline{x'\Omega}$; also $x$ is carried by it onto  $1_{\overline{x\Omega}}\otimes 0_ {\overline{x'\Omega}}$ and $x'$ onto  $0_{\overline{x\Omega}}\otimes 1_{\overline{x'\Omega}}$).
\item\label{proposition:factorizable-myriad:iv} There exist Hilbert spaces $\GG$ and $\GG'$, norm one vectors $\Theta\in \GG$ and $\Theta'\in \GG'$ and a unitary isomorphism between $\HH$ and $\GG\otimes \GG'$ that maps $\Omega$ to $\Theta\otimes \Theta'$ and carries $x$ onto $1_{\GG}\otimes 0_{\GG'}$ and $x'$ onto $0_{\GG}\otimes 1_{\GG'}$. 
\item\label{proposition:factorizable-myriad:vi} There exist minimal projections $P$ and $P'$ of $x$ and $x'$ respectively, such that $\vert \Omega \rangle\langle\Omega\vert=PP'$ (and then necessarily: $P=[\overline{x'\Omega}]$ and $P'=[\overline{x\Omega}]$).
\item\label{proposition:factorizable-myriad:vii} There exist  minimal projections $P$ of $x$ and $P'$ of $x'$ satisfying $P\Omega=\Omega$ and $P'\Omega=\Omega$ (and then necessarily $P=[\overline{x'\Omega}]$ and $P'=[\overline{x\Omega}]$). 
\item\label{proposition:factorizable-myriad:v} 
 For some $X\in x$ and some $X'\in x'$ we have $\vert \Omega \rangle\langle\Omega\vert=XX'$. 
\end{enumerate}
\end{proposition}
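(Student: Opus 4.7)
I would organize the seven conditions into two tiers and close them by a single substantive implication. The first tier gathers \ref{proposition:factorizable-myriad:i}--\ref{proposition:factorizable-myriad:iv} (pure independence), handled directly by Proposition~\ref{quantum-independence}; the second tier, \ref{proposition:factorizable-myriad:vi} and \ref{proposition:factorizable-myriad:vii} together with the ostensibly weakest \ref{proposition:factorizable-myriad:v}, is linked to the first tier through the tensor-product structure of $(\otimes)$. I expect the main obstacle to be the closing implication \ref{proposition:factorizable-myriad:v}$\Rightarrow$\ref{proposition:factorizable-myriad:ii}, where from the bare hypothesis $\vert\Omega\rangle\langle\Omega\vert=XX'$ one has to recover full independence of $x$ and $x'$ under the vector state.

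For the first tier, apply Proposition~\ref{quantum-independence} with $y=x'$. Since $x$ is a factor, $x\lor x'=1_\HH$, so $\overline{(x\lor x')\Omega}=\HH$, and $x,x'$ commute on all of $\HH$. This delivers \ref{proposition:factorizable-myriad:i}$\Leftrightarrow$\ref{proposition:factorizable-myriad:ii}$\Leftrightarrow$\ref{proposition:factorizable-myriad:iii} with their parenthetical clauses at once. Then \ref{proposition:factorizable-myriad:iii}$\Rightarrow$\ref{proposition:factorizable-myriad:iv} by taking $\GG:=\overline{x\Omega}$, $\GG':=\overline{x'\Omega}$, $\Theta:=\Theta':=\Omega$; conversely \ref{proposition:factorizable-myriad:iv}$\Rightarrow$\ref{proposition:factorizable-myriad:i} by a one-line product-state computation in the tensor decomposition. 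Additionally, \ref{proposition:factorizable-myriad:iv} immediately supplies the minimal projections $\vert\Theta\rangle\langle\Theta\vert\otimes\mathbf{1}_{\GG'}\in x$ and $\mathbf{1}_{\GG}\otimes\vert\Theta'\rangle\langle\Theta'\vert\in x'$, each fixing $\Omega$ and with product $\vert\Omega\rangle\langle\Omega\vert$, yielding both \ref{proposition:factorizable-myriad:vi} and \ref{proposition:factorizable-myriad:vii}.

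The equivalence \ref{proposition:factorizable-myriad:vi}$\Leftrightarrow$\ref{proposition:factorizable-myriad:vii} is routine: the norm chain $1=\Vert PP'\Omega\Vert\le\Vert P'\Omega\Vert\le 1$ for commuting projections forces $P'\Omega=\Omega$ and then $P\Omega=\Omega$; in the reverse direction, the existence of a minimal projection in $x$ forces $x$ to be of type~$\I$, and $(\otimes)$ realizes $\HH=\GG\otimes\GG'$ with $P=\vert\phi\rangle\langle\phi\vert\otimes\mathbf{1}$ and $P'=\mathbf{1}\otimes\vert\psi\rangle\langle\psi\vert$, so that $P\Omega=\Omega=P'\Omega$ pins $\Omega=\phi\otimes\psi$ (up to an absorbable unimodular phase) and $PP'=\vert\Omega\rangle\langle\Omega\vert$. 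The parenthetical identifications $P=[\overline{x'\Omega}]$, $P'=[\overline{x\Omega}]$ drop out of the fact that minimality of $P$ in $x$ is equivalent to irreducibility of $x'$ on $P\HH$. Finally \ref{proposition:factorizable-myriad:vi}$\Rightarrow$\ref{proposition:factorizable-myriad:v} is tautological, reducing the whole proof to \ref{proposition:factorizable-myriad:v}$\Rightarrow$\ref{proposition:factorizable-myriad:ii}.

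For the hard step I would proceed in three moves. First, taking adjoints yields the dual factorization $\vert\Omega\rangle\langle\Omega\vert=X^*X'^*$, and $x$-$x'$ commutation rewrites $(XX')^*XX'$ as $\vert X\vert^2\vert X'\vert^2=\vert\Omega\rangle\langle\Omega\vert$; since $\vert X\vert^2\in x$ commutes with this projection it preserves the line $\mathbb{C}\Omega$, whence $\vert X\vert^2\Omega=\alpha\Omega$ and symmetrically $\vert X'\vert^2\Omega=\beta\Omega$ with $\alpha\beta=1$. Second, applying $\vert\Omega\rangle\langle\Omega\vert=X^*X'^*$ to $X\Omega$ gives $\alpha X'^*\Omega=\langle\Omega,X\Omega\rangle\Omega$ on the one hand, while on the other $X'^*\Omega=X'^*XX'\Omega=X\vert X'\vert^2\Omega=\beta X\Omega$; eliminating $X'^*\Omega$ forces the crucial eigenvector relation $X\Omega=\lambda\Omega$ with $\lambda=\langle\Omega,X\Omega\rangle$, and symmetrically $X'\Omega=\mu\Omega$ with $\lambda\mu=1$. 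Third, for $\xi=Y'\Omega$ with $Y'\in x'$ commutation gives $X\xi=Y'X\Omega=\lambda\xi$, so by closure $X\vert_{\overline{x'\Omega}}=\lambda\,\mathbf{1}$ and symmetrically $X'\vert_{\overline{x\Omega}}=\mu\,\mathbf{1}$; any $\xi\in\overline{x\Omega}\cap\overline{x'\Omega}$ then satisfies both $XX'\xi=\lambda\mu\,\xi=\xi$ and $XX'\xi=\langle\Omega,\xi\rangle\Omega$, hence $\xi\in\mathbb{C}\Omega$. The commuting projections $[\overline{x'\Omega}]\in x$ and $[\overline{x\Omega}]\in x'$ therefore have product equal to the rank-one projection onto $\overline{x\Omega}\cap\overline{x'\Omega}=\mathbb{C}\Omega$, namely $\vert\Omega\rangle\langle\Omega\vert$, which is exactly \ref{proposition:factorizable-myriad:ii}. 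The delicate sub-step is the second, where the two factorizations $XX'$ and $X^*X'^*$ of $\vert\Omega\rangle\langle\Omega\vert$ must be played against each other to collapse $X\Omega$ onto the line $\mathbb{C}\Omega$.
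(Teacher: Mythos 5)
Your proof is correct, and while its overall architecture matches the paper's (first tier \ref{proposition:factorizable-myriad:i}--\ref{proposition:factorizable-myriad:iv} via Proposition~\ref{quantum-independence} with $y=x'$, the type-$\I$ reduction $(\otimes)$ for the projection conditions), the closing implication \ref{proposition:factorizable-myriad:v}$\Rightarrow$\ref{proposition:factorizable-myriad:ii} is argued by a genuinely different route. The paper first upgrades $XX'$ to a product of \emph{projections}: it notes $\vert\Omega\rangle\langle\Omega\vert=(XX')^*(XX')=X^*X\,{X'}^*X'$ and argues that the spectral supports of $X^*X\in x$ and ${X'}^*X'\in x'$, stripped of $0$, are singletons $\{\lambda\}$, $\{\lambda'\}$ with $\lambda\lambda'=1$, so that $\vert\Omega\rangle\langle\Omega\vert=PP'$ for the projections $P:=\lambda^{-1}X^*X$, $P':={\lambda'}^{-1}{X'}^*X'$, and then concludes via $[\overline{x'\Omega}]\leq P$, $[\overline{x\Omega}]\leq P'$ together with $\vert\Omega\rangle\langle\Omega\vert\leq[\overline{x'\Omega}]$, $\vert\Omega\rangle\langle\Omega\vert\leq[\overline{x\Omega}]$. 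You instead play $XX'=\vert\Omega\rangle\langle\Omega\vert$ against its adjoint $X^*{X'}^*=\vert\Omega\rangle\langle\Omega\vert$ to extract the joint eigenvector relations $X\Omega=\lambda\Omega$, $X'\Omega=\mu\Omega$ with $\lambda\mu=1$, propagate them by commutation to get $X=\lambda\mathbf{1}$ on $\overline{x'\Omega}$ and $X'=\mu\mathbf{1}$ on $\overline{x\Omega}$, and read off $\overline{x\Omega}\cap\overline{x'\Omega}=\mathbb{C}\Omega$, whence the product of the commuting projections $[\overline{x'\Omega}]\in x$ and $[\overline{x\Omega}]\in x'$ is $\vert\Omega\rangle\langle\Omega\vert$; each of your intermediate identities checks out. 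Your version is entirely algebraic and bypasses the spectral-measure step, which is the most delicate point of the paper's argument; the paper's route yields the sharper intermediate fact that $XX'$ can always be replaced by a product of minimal projections. Two immaterial structural differences: you close \ref{proposition:factorizable-myriad:vii} through \ref{proposition:factorizable-myriad:vi} (and then through \ref{proposition:factorizable-myriad:v}$\Rightarrow$\ref{proposition:factorizable-myriad:ii}) where the paper goes \ref{proposition:factorizable-myriad:vii}$\Rightarrow$\ref{proposition:factorizable-myriad:i} directly, and you obtain \ref{proposition:factorizable-myriad:vi}--\ref{proposition:factorizable-myriad:vii} from \ref{proposition:factorizable-myriad:iv} rather than from \ref{proposition:factorizable-myriad:ii}\&\ref{proposition:factorizable-myriad:iii}. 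One micro-gap in the parentheticals: Proposition~\ref{quantum-independence}\ref{quantum-independence:C} only says $x$ is carried \emph{into} $1_{\overline{x\Omega}}\otimes 0_{\overline{x'\Omega}}$, so the ``onto'' asserted in \ref{proposition:factorizable-myriad:iii} still needs the paper's extra remark that the image of $x=x''$ contains $(0_{\overline{x\Omega}}\otimes 1_{\overline{x'\Omega}})'$.
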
 
\begin{proof}
\ref{proposition:factorizable-myriad:i}$\Leftrightarrow$\ref{proposition:factorizable-myriad:ii}$\Leftrightarrow$\ref{proposition:factorizable-myriad:iii}. On noting that $x\lor x'=1_\HH$ and hence $\overline{(x\lor x')\Omega}=\HH$ and that of course $x$ commutes with $x'$,  this is essentially a rewriting of the same equivalences of  Proposition~\ref{quantum-independence} applied with $y=x'$ and $x$, $\Omega$ as themselves, except that for the last part of the paranthetical addition of \ref{proposition:factorizable-myriad:iii} we notice as follows: on the one hand, the von Neumann algebra $x$ is carried into $1_{\overline{x\Omega}}\otimes 0_ {\overline{x'\Omega}}$; on the other hand, $x'$ is carried into $0_{\overline{x\Omega}}\otimes 1_{\overline{x'\Omega}}$; hence, the image of $x=x''$ is not only contained in, but also contains  $(0_{\overline{x\Omega}}\otimes 1_{\overline{x'\Omega}})'=1_{\overline{x\Omega}}\otimes 0_ {\overline{x'\Omega}}$.

\ref{proposition:factorizable-myriad:iii}$\Rightarrow$\ref{proposition:factorizable-myriad:iv}. We have only to take $\GG=\overline{x\Omega}$, $\GG'=\overline{x'\Omega}$, $\Theta=\Theta'=\Omega$ and for the unitary isomorphism the one of \ref{proposition:factorizable-myriad:iii}. The implication \ref{proposition:factorizable-myriad:iv}$\Rightarrow$\ref{proposition:factorizable-myriad:i} is trivial.

At this point we are assured the equivalence of \ref{proposition:factorizable-myriad:i}-\ref{proposition:factorizable-myriad:iv}.  We shall dispose of the paranthetical additions of \ref{proposition:factorizable-myriad:vi}-\ref{proposition:factorizable-myriad:vii} once we have established the equivalences \ref{proposition:factorizable-myriad:i}-\ref{proposition:factorizable-myriad:vii}.


(\ref{proposition:factorizable-myriad:ii}\&(\ref{proposition:factorizable-myriad:iii})$\Rightarrow$ \ref{proposition:factorizable-myriad:vi}. We take $P=[\overline{x'\Omega}]$ and $P'=[\overline{x\Omega}]$. 

 \ref{proposition:factorizable-myriad:vi}$\Rightarrow$\ref{proposition:factorizable-myriad:vii}.  $PP'\Omega=\vert\Omega \rangle\langle\Omega\vert\Omega=\Omega$, so $P\Omega=PPP'\Omega=PP'\Omega=\Omega$.

\ref{proposition:factorizable-myriad:vii}$\Rightarrow$\ref{proposition:factorizable-myriad:i}. Since the factors $x$ and $x'$ contain the non-zero minimal projections $P$ and $P'$ respectively, they are both type $\I$. Consequently, by $(\otimes)$, for some non-zero Hilbert spaces $\GG$ and $\GG'$, up to unitary isomorphism: $\KK=\GG\otimes \GG'$, $x=1_{\GG}\otimes 0_{\GG'}$ and  therefore $PP'=Q\otimes Q'$ for some one-dimensional projections $Q$ and $Q'$ of $\GG$ and $\GG'$ respectively. But $P\Omega=\Omega$ and $P'\Omega=\Omega$, so $PP'\Omega=\Omega$, which entails $[\mathbb{C}\Omega]\leq PP'$. Hence, still up to said unitary isomorphism, the one-dimensional projection $PP'=Q\otimes Q'$ is onto $[\mathbb{C}\Omega]$, i.e. $\Omega=\Theta\otimes \Theta'$ for some norm one vectors $\Theta\in\GG$ and $\Theta'\in \GG'$. Now  \ref{proposition:factorizable-myriad:i} follows at once. 

Thus \ref{proposition:factorizable-myriad:i}-\ref{proposition:factorizable-myriad:vii} are equivalent.

Uniquenesses of  $P$, $P'$ in \ref{proposition:factorizable-myriad:vi}. $\vert\Omega \rangle\langle\Omega\vert=PP'$ necessitates $x'\Omega=x'PP'\Omega=Px'P'\Omega\subset P\HH$ so $[\overline{x'\Omega}]\leq P$; since $P$ is minimal in $x$ and since the projection $[\overline{x'\Omega}]$, which belongs to $x$ by \ref{proposition:factorizable-myriad:iii}, is not zero, we conclude that $P=[\overline{x'\Omega}]$; analogously we deduce that $P'=[\overline{x\Omega}]$. 

Uniquenesses of $P$, $P'$ in \ref{proposition:factorizable-myriad:vii}.  $P\Omega=\Omega$ entails $x'\Omega=x'P\Omega=Px'\Omega\subset P\HH$, and we continue and finish exactly as for \ref{proposition:factorizable-myriad:vi}.

The implication \ref{proposition:factorizable-myriad:vi}$\Rightarrow$\ref{proposition:factorizable-myriad:v} is trivial.

\ref{proposition:factorizable-myriad:v}$\Rightarrow$\ref{proposition:factorizable-myriad:ii}. $\vert\Omega \rangle\langle\Omega\vert=XX'$ implies $\vert\Omega \rangle\langle\Omega\vert=(XX')^*(XX')=X^*X{X'}^*X'$. Since $\vert\Omega \rangle\langle\Omega\vert$ is a one-dimensional projection, the spectral measures of the non-zero commuting Hermitean operators $X^*X\in x$ and  ${X'}^*X'\in x'$ must have for their supports, stripped of $0$, singletons $\{\lambda\}$ and $\{\lambda'\}$ respectively with $\lambda\lambda'=1$. It follows that $\vert\Omega \rangle\langle\Omega\vert=PP'$ for the projections $P:=\lambda^{-1}X^*X$ of $ x$ and $P':={\lambda}'^{-1}{X'}^*X'$ of $x'$. Precisely as in the proof of the uniqueness of  $P$, $P'$ in \ref{proposition:factorizable-myriad:vi} we see that $[\overline{x'\Omega}]\leq P$ and $[\overline{x\Omega}]\leq P'$. Of course we also have $\vert\Omega \rangle\langle\Omega\vert\leq [\overline{x'\Omega}]$ and $\vert\Omega \rangle\langle\Omega\vert\leq [\overline{x\Omega}]$. But then $\vert\Omega \rangle\langle\Omega\vert=[\overline{x'\Omega}]\vert\Omega \rangle\langle\Omega\vert[\overline{x\Omega}]=[\overline{x'\Omega}]PP'[\overline{x\Omega}]=[\overline{x'\Omega}][\overline{x\Omega}]$.
 

The proof is now complete.
\end{proof}

\begin{definition}
For a factorization $\FFF$ of $\HH$ and $\omega\in \HH$ we say $\omega$ is a factorizable vector of $\FFF$ if $\omega\ne 0$ and  $\Omega:=\frac{\omega}{\Vert\omega\Vert}$ satisfies  any of the equivalent conditions of Proposition~\ref{proposition:factorizable-myriad} for all $x\in \FFF$.
\end{definition}
In terms of the characterization \ref{proposition:factorizable-myriad:v} of Proposition~\ref{proposition:factorizable-myriad}, which is perhaps most basic, the zero vector could potentially  naturally be included in the factorizable ones (since $\vert 0\rangle\langle 0\vert=00$). We stress however that for us only non-zero vectors can be factorizable, which will turn out to be a convenient convention. 
\begin{remark}\label{remark:factorization-typeI}
A factorization can admit  factorizable vectors only if it is of type $\I$ (we see it for instance from Item~\ref{proposition:factorizable-myriad:vi} of Proposition~\ref{proposition:factorizable-myriad}).
\end{remark}

\section{Unital factorizations and their multiplicative vectors}\label{section:unital-factorizations}
Unital factorizations are  factorizations equipped with a distinguished norm one factorizable vector, the unit. Their multiplicative vectors are the factorizable vectors of the underlying factorization having a certain normalization (scalar product equal to one) relative to the distinguished unit. We proceed to detail this. 
\subsection{Unital factorizations}
Let us formally introduce the notion of a ``factorization with unit'' together with some associated terminology.
\begin{definition}
A unital factorization situated on $\HH$ is a pair $(\BBB,\Omega)$ consisting of  a factorization   $\BBB$ of $\HH$  and a norm one factorizable vector $\Omega$ of $\BBB$ (referred to as the unit). In such case:
\begin{itemize}
\item $(\BBB,\Omega)$ is said to be complete if $\BBB$ is so;
\item a unital subfactorization of $(\BBB,\Omega)$ is a unital factorization of the form $(\tilde\BBB,\Omega)$, where $\tilde \BBB$ is a subfactorization of $\BBB$.
\end{itemize}

Given  two unital factorizations $(\BBB_1,\Omega_2)$ and $(\BBB_2,\Omega_2)$ introduced relative to the  Hilbert spaces $\HH_1$ and $\HH_2$, respectively, we: 
\begin{itemize}
\item define their product as the unital factorization $(\BBB_1\otimes \BBB_2,\Omega_1\otimes\Omega_2)$ on $\HH_1\otimes \HH_2$; 
\item say they are isomorphic if there exists  a unitary isomorphism between $\HH_1$ and $\HH_2$ that sends $\Omega_1$ to $\Omega_2$ and carries $\BBB_1$ onto $\BBB_2$. 
\end{itemize}
\end{definition}
\begin{remark}
Thanks to Proposition~\ref{proposition:factorizable-myriad}, specifically the characterization~\ref{proposition:factorizable-myriad:i}, a unital factorization $(\BBB,\Omega)$ is nothing but a factorization $\BBB$ equipped with a norm one vector $\Omega$ satisfying the property  that for all $x\in  \BBB$, $x$ and $x'$ are independent under $\Omega$; and necessarily then $\BBB$ is of type $\I$ (as noted in Remark~\ref{remark:factorization-typeI}). The product of two unital factorizations is indeed a unital factorization, by Proposition~\ref{proposition:raise-independence}.
\end{remark}
 Several examples are now in order. 

\begin{example}\label{example:noise-boolean}
Let $\PP$ be a probability. We denote by $1_\PP$ its basic $\sigma$-field (its domain) and then by $\hat{\PP}$ the collection of all $\PP$-complete sub-$\sigma$-fields of $1_\PP$. (The probability $\PP$ itself need not be complete, however.) The set $\hat\PP$ comes equipped with natural lattice operations of meet $\land=$ intersection and join $\lor=$ generated $\sigma$-field.
A  noise(-type) Boolean algebra under $\PP$ \cite[Definition~1.1]{tsirelson} is a distributive non-empty sublattice  of $\hat\PP$ each member of which admits an independent complement. Here by an independent complement of $x\in \hat\PP$ we mean an $x'\in \hat \PP$ such that $x$ and $x'$ are independent under $\PP$ and $x\lor x'=1_\PP$, whence automatically $x\land x'=\PP^{-1}(\{0,1\})=:0_ \PP$. Let $B$ be a noise Boolean algebra. Interpreting $0_\PP$ as $0$ and $1_\PP$ as $1$, $B$ is a Boolean algebra, independent complements, being unique in $B$, playing the role of complements. For $x\in B$ set $F_x:= 1_{\LLL^2(\PP\vert_x)}\otimes 0_{\LLL^2(\PP\vert_{x'})}\subset 1_{\LLL^2(\PP)}$, which is a type $\I$ factor, the inclusion being up to the natural unitary equivalence $\LLL^2(\PP)=\LLL^2(\PP\vert_x)\otimes \LLL^2(\PP\vert_{x'})$. Put $B^\uparrow:=\{F_x:x\in B\}$ and denote by $\mathbbm{1}$ the constant one function on the sample space of $\PP$. Then $(B^\uparrow,\mathbbm{1})$ is a unital factorization and $(B\ni x\mapsto F_x\in B^\uparrow)$ is an isomorphism of Boolean algebras.
\end{example}

Actually it is not clear whether  this example does not already exhaust unital factorizations up to isomoprhism, which is worth singleing out explicitly as
\begin{question}\label{question}
 (When the underlying Hilbert space is separable) is every unital factorization isomorphic to one coming from a noise Boolean algebra?
\end{question}
We will make no attempt to answer it here.

\begin{example}\label{example:factorizations:cts-Fock} Let $(\FF_s)_{s\in S}$ be a $\nu$-measurable field \cite[Section~II.1.3]{dixmier1981neumann} of a.e.-$\nu$ non-zero Hilbert spaces, $\nu$  atomless. An element $p$ of the measure algebra $\mathsf{b}$  of $\nu$ is identified with the projection of the direct integral \cite[Section~II.1.5]{dixmier1981neumann} $\FF:=\int^\oplus\FF_s\nu(\dd s)$ onto $\int^\oplus_p \FF_s\nu(\dd s)$. For each such $p$ there is a canonical unitary isomorphism between $\Gamma_s(p\FF)\otimes \Gamma_s(p^\perp\FF)$ and $\Gamma_s(\FF)$ which sends $e_{p\FF}(x)\otimes e_{p^\perp \FF}(y)$ to $e_{\FF}(x+y)$  for all $(x,y)\in (p\FF)\times (p^\perp \FF)$; $1_{\Gamma_s(p\FF)}\otimes 0_{\Gamma_s(p^\perp\FF)}$ is sent via this map onto a type $\I$ factor of $\Gamma_s(\FF)$, this factor we shall denote by $F_p$. Then  $\mathrm{c\text{-}Fock}(\int^\oplus \FF_s\nu(\dd s)):=\{F_p:p\in \mathsf{b}\}$ is an atomless factorization of $\Gamma_s(\FF)$, the continuous Fock factorization associated to $\int^\oplus \FF_s\nu(\dd s)$; moreover,  the map $(\mathsf{b}\ni p\mapsto F_p\in \mathrm{c\text{-}Fock}(\int^\oplus \FF_s\nu(\dd s)))$ is an isomorphism of Boolean algebras. Furthermore, denoting by  $1\in \mathbb{C}=\FF^{\otimes 0}\subset \Gamma_s(\FF)$ (up to natural identifications) the vacuum vector, $(\mathrm{c\text{-}Fock}(\int^\oplus \FF_s\nu(\dd s)),1)$ is a unital factorization.
\end{example}
In the discrete case it is better to work with
\begin{example}\label{example:factorizations:discrete-Fock}  Let $(\FF_\alpha)_{\alpha\in \mathfrak{A}}$ be any  collection of Hilbert spaces of dimension at least one and let $\nu$ be a measure on $(\mathfrak{A},2^\mathfrak{A})$  with $\nu(\{\alpha\})\in (0,\infty)$ for all $\alpha\in \mathfrak{A}$. Write $\FF:= \oplus_{\alpha\in \mathfrak{A}}\nu(\{\alpha\})\FF_\alpha$ and $\FF_A:= \oplus_{\alpha\in A}\nu(\{\alpha\})\FF_\alpha$ for $A\subset\mathfrak{A}$. Put $\gamma_s(\FF):=\oplus_{F\in (2^\mathfrak{A})_{\mathrm{fin}}} \left(\left(\prod_{f\in F}\nu(\{f\})\right)\otimes_{f\in F}\FF_f\right)$.  For $x\in \FF$ denote $e_{\FF}(x):=\oplus_{F\in (2^\mathfrak{A})_{\mathrm{fin}}}\otimes (x\vert_F)$, the ``exponential'' (geometric?) vector associated to $x$. Then for $A\in 2^\mathfrak{A}$ we have the decomposition $\gamma_s(\FF)=\gamma_s(\FF_A)\otimes\gamma_s(\FF_{\mathfrak{A}\backslash A})$ up to the natural unitary isomorphism which carries $e_{\FF_A}(x)\otimes e_{\FF_{\mathfrak{A}\backslash A}}(y)$ to $e_{\FF}(x+y)$  for all $(x,y)\in  \FF_A\times\FF_{\mathfrak{A}\backslash A}$; this unitary isomorphism transfers $1_{\gamma_s(\FF_A)}\otimes 0_{\gamma_s(\FF_{\mathfrak{A}\backslash A})}$ onto a  type $\I$ factor of $\gamma_s(\FF)$, which we  denote by $F_A$.  Then $\mathrm{d\text{-}Fock}(\oplus_{\alpha\in \mathfrak{A}}\nu(\{\alpha\})\FF_\alpha):=\{F_A:A\in 2^\mathfrak{A}\}$ is a factorization of $\gamma_s(\FF)$, the  discrete Fock factorization associated to $\oplus_{\alpha\in \mathfrak{A}}\nu(\{\alpha\})\FF_\alpha$; moreover $(2^\mathfrak{A}\ni A\mapsto F_A\in\mathrm{d\text{-}Fock}(\oplus_{\alpha\in \mathfrak{A}}\nu(\{\alpha\})\FF_\alpha))$ is an isomorphism of Boolean algebras.  Furthermore, $(\mathrm{d\text{-}Fock}(\oplus_{\alpha\in \mathfrak{A}}\nu(\{\alpha\})\FF_\alpha),1)$, where $1\in \mathbb{C}=\otimes_{f\in \emptyset}\FF_f\subset \gamma_s(\FF)$ (up to natural identifications) plays the role of the ``vacuum'' vector, is a unital factorization.
\end{example}


\begin{example}\label{example:discrete-factorization}
Let $(\JJ_\beta)_{\beta\in \mathfrak{B}}$ be a  family of  Hilbert spaces of dimension at least two. For each $\beta\in \mathfrak{B}$ there is given also a norm one vector $e_\beta\in \JJ_\beta$. We  form the incomplete tensor product $\JJ:=\otimes_{\beta\in \mathfrak{B}}\JJ_\beta$ relative to the stabilising family $e=(e_\beta)_{\beta\in \mathfrak{B}}$ \cite[Section~3]{araki-woods} \cite[Exercise~15.10]{Parthasarathy}. (For the ``uninitiated'' reader let us recall: we have a map $\otimes$ from $L:=\{g\in \prod_{\beta\in \mathfrak{B}}\JJ_\beta:g_\beta=e_\beta\text{ for all but finitely many }\beta\in \mathfrak{B}\}$ onto a total subset of $ \JJ$ satisfying $\langle \otimes g,\otimes f\rangle=\prod_{\beta\in \mathfrak{B}}\langle g_\beta,f_\beta\rangle$ for  $\{f,g\}\subset L$, which determines the pair $(\JJ,\otimes)$ uniquely up to unitary equivalence.) For $B\in 2^\mathfrak{B}$, $\JJ= (\otimes_{\beta\in B}\JJ_\beta)\otimes (\otimes_{\beta\in \mathfrak{B}\backslash B}\JJ_\beta)=:\JJ_B\otimes \JJ_{\mathfrak{B}\backslash B}$ up to the natural unitary isomorphism taking $\otimes (g\cup f)$ to $(\otimes g)\otimes (\otimes f)$ for all $(g,f)\in (\prod_{\beta\in A}\JJ_\beta)\times (\prod_{\beta\in \mathfrak{B}\backslash A}\JJ_\beta)$, $\JJ_B$ and $\JJ_{\mathfrak{B}\backslash B}$ being defined here relative to the stabilising families $e\vert_B$ and $e\vert_{\mathfrak{B}\backslash B}$; $1_{\JJ_B} \otimes 0_{ \JJ_{\mathfrak{B}\backslash B}}$ is carried by this isomorphism onto a type $\I$ factor of $\JJ$ that we denote by $F_B$. Then $\prod(\otimes_{\beta\in \mathfrak{B}}\JJ_\beta^{e_\beta}):=\{F_B:B\in 2^\mathfrak{B}\}$ is a factorization of $\JJ$; moreover, the map $(2^\mathfrak{B}\ni B\mapsto F_B\in \prod(\otimes_{\beta\in \mathfrak{B}}\JJ_\beta^{e_\beta}))$ is an isomorphism of Boolean algebras. Furthermore, $(\prod(\otimes_{\beta\in \mathfrak{B}}\JJ_\beta^{e_\beta}),\otimes_{\beta\in \mathfrak{B}}e_\beta)$ is a unital factorization. 
\end{example}
The unital factorizations of Examples~\ref{example:factorizations:discrete-Fock}  and~\ref{example:discrete-factorization} are just a rewriting of each other in the sense that when $\mathfrak{A}=\mathfrak{B}$ and $\JJ_\beta=(\mathbb{C}e_\beta)\oplus \nu(\{\beta\})\FF_\beta$ for $\beta\in \mathfrak{B}$, then $(\mathrm{d\text{-}Fock}(\oplus_{\alpha\in \mathfrak{A}}\nu(\{\alpha\})\FF_\alpha),1)$ and $(\prod(\otimes_{\beta\in \mathfrak{B}}\JJ_\beta^{e_\beta}),\otimes_{\beta\in \mathfrak{B}}e_\beta)$ are isomorphic by  the natural unitary isomorphism which results from ``opening the brackets'' in $\otimes_{\beta\in \mathfrak{B}}\JJ_\beta=\otimes_{\beta\in \mathfrak{B}}((\mathbb{C}e_\beta)\oplus\nu(\{\beta\}) \FF_\beta)$.\label{page:obseervation}

\begin{definition}\label{definition:noise-factorization}
Let $(T,\TT,\nu)$ be a $\sigma$-finite  standard  measure space and let  $(\FF_s)_{s\in S}$ be a measurable field of separable, a.e.-$\nu$ non-zero Hilbert spaces.  We define $\mathrm{Fock}(\int^\oplus\FF_s\nu(\dd s))$ as the product of the  factorizations of Examples~\ref{example:factorizations:cts-Fock} and~\ref{example:factorizations:discrete-Fock} taking respectively $\int^\oplus \FF_s\nu_c(\dd s)$ and $\oplus_{\alpha\in \at(\nu)}\nu(\{\alpha\})\FF_\alpha$ as the input data. This  factorization of the separable Hilbert space $\boldsymbol{\Upgamma}_s(\int^\oplus\FF_s\nu(\dd s)):=\Gamma_s(\int^\oplus\FF_s\nu_c(\dd s))\otimes \gamma_s(\oplus_{\alpha\in\at(\nu)}\nu(\{\alpha\})\FF_\alpha)$ is said to be Fock. Effecting the product of the unital factorizations, i.e. taking along the units, we get a Fock unital factorization  $(\mathrm{Fock}(\int^\oplus\FF_s\nu(\dd s)),1)$, where we have taken the liberty of denoting the unit with $1$ yet again. For $u\in \int^\oplus \FF_s\nu(\dd s)$ we write $e_{\int ^\oplus\FF_s\nu(\dd s)}(u):=e_{\int^\oplus\FF_s\nu_c(\dd s)}(u\vert_{T\backslash \at(\nu)})\otimes e_{\oplus_{\alpha\in\at(\nu)}\nu(\{\alpha\})\FF_\alpha}(u\vert_{\at(\nu)})$ for the exponential (perhaps we should call it the exponential-geometric) vector associated to $u$,  while for $W\in \TT$ we  set $F_{\int ^\oplus\FF_s\nu(\dd s)}(W):=F_{W\backslash \at(\nu)}\otimes F_{W\cap\at(\nu)}$. A  unital factorization is said to be of Fock type if it is isomorphic to a unital subfactorization of a   Fock unital factorization.
\end{definition}
In the context of Definition~\ref{definition:noise-factorization}
\begin{equation}\label{equation:scalar-product-in-exponential-Fock-space}
\left\langle e_{\int ^\oplus\FF_s\nu(\dd s)}(u),e_{\int ^\oplus\FF_s\nu(\dd s)}(v)\right\rangle=e^{\int\langle u(t),v(t)\rangle\nu_c(\dd t)}\prod_{t\in T}(1+\nu(\{t\})\langle u(t),v(t)\rangle),\quad \{u,v\}\subset  \int^\oplus \FF_s\nu(\dd s).
\end{equation}
 A Fock factorization  is complete; as a matter of fact, the isomorphisms of Boolean algebras noted in Examples~\ref{example:factorizations:cts-Fock}-\ref{example:discrete-factorization} preserve arbitrary (not just finite) joins (and hence also meets) \cite[Eq.~(3.30) and p.~ 202]{araki-woods}. Every Fock factorization is also type $\I$.

\begin{center}
\fbox{Henceforth $(\BBB,\Omega)$ is a unital factorization based on the  Hilbert space $\HH$.}
\end{center}

\begin{definition}
For $x\in \hat\HH$ we set $\HH_x:=\overline{x\Omega}$ and then $\phi_x:=[\HH_x]$. We also put $\Phi:=\{\phi_x:x\in \BBB\}$.
\end{definition}
Trivially we identify $\HH_{0_\HH}=\mathbb{C}\Omega$, $\phi_{0_\HH}=\vert\Omega\rangle\langle \Omega\vert$ 
and $\HH_{1_\HH}=\HH$, $\phi_{1_\HH}=\mathbf{1}_\HH$ 
in parallel. If $(x_n)_{n\in \mathbb{N}}$ is a sequence in $\hat\HH$ that is $\uparrow$ (resp. $\downarrow$) to an $x\in\hat\HH$, then $\phi_{x_n}\uparrow \phi_x$ (resp. $\phi_{x_n}$ is $\downarrow$ and the limit is $\geq \phi_x$) as $n\to\infty$.

\begin{example}\label{example:noise-Boolean-bis}
Relative to the unital factorization $(B^\uparrow,\mathbbm{1})$ of Example~\ref{example:noise-boolean}, for $x\in B$, $\HH_{F_x}=\LLL^2(\PP\vert_x)$ and $\phi_{F_x}=\PP[\cdot\vert x]$, the conditional expectation w.r.t. $x$.
\end{example}

On account of Proposition~\ref{proposition:factorizable-myriad}\ref{proposition:factorizable-myriad:iii}, for  each $x\in \BBB$  there exists a unique unitary isomorphism between $\HH$ and $\HH_x\otimes \HH_{x'}$ that 
\begin{itemize}
\item sends $\Omega$ to $\Omega\otimes \Omega$ and sends $XX'$ to  $X\vert_{\HH_x}\otimes X'\vert_{\HH_{x'}}$ for all $X\in x$ and all  $X'\in x'$,
\item therefore $\HH_x$ onto $\HH_x\otimes (\mathbb{C}\Omega)$ and $\HH_{x'}$ onto $(\mathbb{C}\Omega)\otimes \HH_{x'}$, 
\item whence also $x$ onto $1_{\HH_x}\otimes 0_ {\HH_{x'}}$ and $x'$ onto $0_{\HH_{x}}\otimes 1_{\HH_{x'}}$;
\end{itemize}
\begin{equation}\label{identification}
\text{\fbox{henceforth we silently identify $\HH$ with $\HH_x\otimes \HH_{x'}$ as per above.}}
\end{equation}
 Some  immediate consequences  of the preceding observation ensue. 

%

\begin{definition}
Let $x\in \BBB$. We set $\BBB_x:=\BBB\cap 2^x$, $y\vert_{\HH_x}:=\{Y\vert_{\HH_x}:Y\in y\}$ for $y\in \BBB_x$ and then $\BBB_x\vert_{\HH_x}:=\{y\vert_{\HH_x}:y\in \BBB_x\}$. 
\end{definition}

\begin{proposition}\label{proposition:restriction-of-noise-factorization}
For $x\in \BBB$, $(\BBB_x\vert_{\HH_x},\Omega)$ is a unital factorization based on the Hilbert space $\HH_x$ and the map $(\BBB\ni y\mapsto (y\land x)\vert_{\HH_x}\in \BBB_x\vert_{\HH_x})$ is a homomorphism of Boolean algebras. 
\end{proposition}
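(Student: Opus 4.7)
The plan is to work throughout under the identification $\HH = \HH_x\otimes \HH_{x'}$ of \eqref{identification}, under which $\Omega$ becomes $\Omega\otimes\Omega$ and $x$ becomes $1_{\HH_x}\otimes 0_{\HH_{x'}}$. The key observation is that any $y\in\BBB_x$, being contained in $x$, must take the form $y = (y\vert_{\HH_x})\otimes 0_{\HH_{x'}}$, with $y\vert_{\HH_x}$ a von Neumann algebra on $\HH_x$ canonically $*$-isomorphic to $y$; in particular $y\vert_{\HH_x}$ inherits trivial center and so is a factor on $\HH_x$. Trivially $0_{\HH_x}$ and $1_{\HH_x}$ lie in $\BBB_x\vert_{\HH_x}$, and distributivity transfers from $\BBB$.

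For $\BBB_x\vert_{\HH_x}$ to be a factorization the crux is closure under commutation within $\BB(\HH_x)$. The commutation theorem for tensor products gives $y' = (y\vert_{\HH_x})'\otimes 1_{\HH_{x'}}$ in $\BB(\HH)$, and intersecting with $x$ yields
\begin{equation*}
y'\land x = (y\vert_{\HH_x})'\otimes 0_{\HH_{x'}}, \qquad (y'\land x)\vert_{\HH_x} = (y\vert_{\HH_x})'.
\end{equation*}
Since $y'\land x\in\BBB_x$, the commutant $(y\vert_{\HH_x})'$ therefore belongs to $\BBB_x\vert_{\HH_x}$. Closure of $\BBB_x\vert_{\HH_x}$ under meet and join is immediate from the corresponding closure of $\BBB_x$ combined with the lattice-homomorphism property of $(k,f)\mapsto k\otimes f$ noted in the opening remark.

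To verify that $\Omega$ is factorizable for $\BBB_x\vert_{\HH_x}$ I would use Proposition~\ref{proposition:factorizable-myriad}\ref{proposition:factorizable-myriad:i}: given $y\in\BBB_x$, $Y\in y\vert_{\HH_x}$ and $Z\in(y\vert_{\HH_x})'$, the lifts $\tilde Y:=Y\otimes\mathbf{1}_{\HH_{x'}}\in y$ and $\tilde Z:=Z\otimes\mathbf{1}_{\HH_{x'}}\in (y\vert_{\HH_x})'\otimes 1_{\HH_{x'}}=y'$ inherit factorizability from $\Omega$ for $\BBB$ at $y$; since under the identification $\tilde Y\Omega = (Y\Omega)\otimes\Omega$, $\tilde Z\Omega = (Z\Omega)\otimes\Omega$ and $\tilde Y\tilde Z\Omega = (YZ\Omega)\otimes\Omega$, the required identity $\langle\Omega,YZ\Omega\rangle = \langle\Omega,Y\Omega\rangle\langle\Omega,Z\Omega\rangle$ on $\HH_x$ reduces to the corresponding identity for $\tilde Y$, $\tilde Z$ on $\HH$.

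For the homomorphism $\pi(y):=(y\land x)\vert_{\HH_x}$, preservation of the bottom, top and meet are routine Boolean manipulations; preservation of join uses distributivity of $\BBB$ to write $(y\lor z)\land x = (y\land x)\lor (z\land x)$ and then the lattice-homomorphism property of the tensor product. Preservation of the complement follows by applying the displayed identity $(w'\land x)\vert_{\HH_x} = (w\vert_{\HH_x})'$ above with $w:=y\land x$, expanding $w' = y'\lor x'$ and using $x'\land x = 0_\HH$:
\begin{equation*}
(\pi(y))' = ((y'\lor x')\land x)\vert_{\HH_x} = (y'\land x)\vert_{\HH_x} = \pi(y').
\end{equation*}
The only genuine subtlety is the correct invocation of the tensor-product commutation theorem to pin down commutants on $\HH_x$ in terms of meets in $\BBB$; everything else is careful bookkeeping with the identification.
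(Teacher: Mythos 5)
Your proposal is correct and follows essentially the same route as the paper: both arguments hinge on the identification $\HH=\HH_x\otimes\HH_{x'}$, the commutation theorem for tensor products to get $(y'\land x)\vert_{\HH_x}=((y\land x)\vert_{\HH_x})'$, and the lattice-homomorphism property of $(k,f)\mapsto k\otimes f$. The paper leaves the factorizability of $\Omega$ for the restricted family and the remaining Boolean identities to the reader ("the claim follows easily"), whereas you spell them out; the substance is the same.
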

\begin{proof}
Notice that $\HH_x\ne \{0\}$ and that $\BBB_x\vert_{\HH_x}\subset \widehat{\HH_x}$. \eqref{identification} identifies a $y=(y\land x)\lor (y\land x')$ from $\BBB$ with $(y\land x)\vert_{\HH_x}\otimes (y\land x')\vert_{\HH_{x'}}$, hence $y'$ with $((y\land x)\vert_{\HH_x})'\otimes ( (y\land x')\vert_{\HH_{x'}})'$; thus $y'\land x$ with $(((y\land x)\vert_{\HH_x})'\otimes  ( (y\land x')\vert_{\HH_{x'}})')\land (1_{\HH_x}\otimes 0_ {\HH_{x'}})=((y\land x)\vert_{\HH_x})'\otimes 0_{\HH_{x'}}$ on the one hand, but also with $((y'\land x)\vert_{\HH_x})\otimes 0_{\HH_{x'}}$ on the other. This shows that the indicated map preserves the commutant operation and clearly it does also joins; besides, it sends $0_\HH$ to $0_{\HH_x}$ and $1_\HH$ to $1_{\HH_x}$. Now the claim follows easily.
\end{proof}
Thus, for $x\in \BBB$, $(\BBB,\Omega)$ is (isomorphic to) the product of  the ``restrictions'' $(\BBB_x\vert_{\HH_x},\Omega)$ and $(\BBB_{x'}\vert_{\HH_{x'}},\Omega)$, which may be referred to as the ``local product decomposition'' of $(\BBB,\Omega)$.

 \begin{proposition}\label{proposition:orthogonal}
 If $\{x,y\}\subset \BBB$ and $x\land y=0_\HH$, then $\HH_x$ and $\HH_y\ominus (\mathbb{C}\Omega)$ are orthogonal.
 \end{proposition}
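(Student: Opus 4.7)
The plan is to upgrade the hypothesis $x\land y=0_\HH$ to the stronger Boolean-algebraic statement $y\subset x'$, and then read off orthogonality directly from the tensor-product decomposition $\HH=\HH_x\otimes \HH_{x'}$ provided by $\Omega$'s factorizability at $x$.

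First I would observe, using distributivity of $\BBB$ together with $x\lor x'=1_\HH$, that
\begin{equation*}
y \;=\; y\land 1_\HH \;=\; y\land (x\lor x') \;=\; (y\land x)\lor (y\land x') \;=\; 0_\HH \lor (y\land x') \;=\; y\land x',
\end{equation*}
so $y\subset x'$ as subsets of $\BB(\HH)$. In particular $y\Omega\subset x'\Omega$, and taking closures $\HH_y=\overline{y\Omega}\subset \overline{x'\Omega}=\HH_{x'}$.

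Next I would invoke the identification \eqref{identification}: $\HH$ is unitarily isomorphic to $\HH_x\otimes \HH_{x'}$ in such a way that $\Omega\mapsto \Omega\otimes\Omega$, $\HH_x$ is carried onto $\HH_x\otimes (\mathbb{C}\Omega)$, and $\HH_{x'}$ onto $(\mathbb{C}\Omega)\otimes \HH_{x'}$. Under this identification $\HH_y$ sits inside $(\mathbb{C}\Omega)\otimes \HH_{x'}$, and any $w\in \HH_y\ominus (\mathbb{C}\Omega)$ corresponds to a vector of the form $\Omega\otimes w_0$ with $w_0\in \HH_{x'}$ satisfying $\langle \Omega,w_0\rangle=0$. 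A typical $v\in \HH_x$ corresponds to $v\otimes \Omega$, and the inner product splits as
\begin{equation*}
\langle v\otimes \Omega,\,\Omega\otimes w_0\rangle \;=\; \langle v,\Omega\rangle\,\langle \Omega,w_0\rangle \;=\; 0.
\end{equation*}
This gives $\HH_x\perp (\HH_y\ominus (\mathbb{C}\Omega))$, as required.

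The only substantive step is the Boolean-algebraic observation $y\le x'$; once that is in hand, the rest is bookkeeping with the product decomposition. There is no real obstacle, though one must be careful that the tensor identification sends $\HH_x$ and $\HH_{x'}$ to the two distinguished ``axes'' $\HH_x\otimes(\mathbb{C}\Omega)$ and $(\mathbb{C}\Omega)\otimes \HH_{x'}$ (which is exactly what the parenthetical of Proposition~\ref{proposition:factorizable-myriad}\ref{proposition:factorizable-myriad:iii} asserts), so that the zero cross-term $\langle \Omega,w_0\rangle$ appears in the inner-product computation.
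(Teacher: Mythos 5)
Your proof is correct and follows essentially the same route as the paper's: reduce via $x\land y=0_\HH\Rightarrow y\subset x'$ (the paper simply says ``we may and do assume $y=x'$'') and then compute the inner product through the identification $\HH=\HH_x\otimes\HH_{x'}$, where the cross-term $\langle\Omega,w_0\rangle=0$ kills everything. No issues.
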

 \begin{proof}
 We may and do assume $y=x'$. Then for $\xi\in \HH_x$ and $\xi'\in \HH_{x'}$, $\langle \xi,\xi'-[\mathbb{C}\Omega]\xi'\rangle=\langle \xi\otimes \Omega,\Omega\otimes \xi'-\langle \Omega,\xi'\rangle\Omega\otimes\Omega\rangle=\langle \xi,\Omega\rangle\langle \Omega,\xi'\rangle-\langle \Omega,\xi'\rangle\langle \xi,\Omega\rangle\langle\Omega,\Omega\rangle=0$ ($\because$ $\Omega$ is unit),  yielding the claim.
 \end{proof}

\begin{proposition}\label{rmk:parition-of-unity-tensor-decomposition}
For every partition of unity $P$  of  $\BBB$    there exists a unique unitary isomorphism between $\HH$ and $\otimes_{p\in P}\HH_p$ that sends $\Omega$ to $\Omega^{\otimes P}$ and  $\prod_{p\in P}X_p$ to  $\otimes_{p\in P}X\vert_{\HH_p}$ for all choice functions $X$ on $P$.
\end{proposition}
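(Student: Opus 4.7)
The plan is to induct on $n := |P|$. The base case $n = 1$ is trivial: since $\HH \ne \{0\}$ forces $0_\HH \ne 1_\HH$, the only singleton partition of unity is $P = \{1_\HH\}$, and the identity map on $\HH_{1_\HH} = \HH$ fulfils all requirements.

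For the inductive step, pick any $p_0 \in P$ and set $q := \lor(P \backslash \{p_0\})$. In the Boolean algebra $\BBB$ the elements $p_0$ and $q$ are complementary, so $q = p_0'$. Proposition~\ref{proposition:factorizable-myriad}\ref{proposition:factorizable-myriad:iii} supplies a unique unitary $U_1 : \HH \to \HH_{p_0} \otimes \HH_q$ sending $\Omega$ to $\Omega \otimes \Omega$ and $XY$ to $X|_{\HH_{p_0}} \otimes Y|_{\HH_q}$ for all $X \in p_0$, $Y \in q$. Each $p \in P \backslash \{p_0\}$ satisfies $p \land p_0 = 0_\HH$, hence $p \leq p_0' = q$, so $P \backslash \{p_0\}$ lies in $\BBB_q$ and its image under the Boolean-algebra homomorphism of Proposition~\ref{proposition:restriction-of-noise-factorization} is a partition of unity of the unital factorization $(\BBB_q|_{\HH_q}, \Omega)$ of size $n-1$ (injectivity of the restriction map on $\BBB_q$ is easily read off from~\eqref{identification}). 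Moreover, because $p \leq q$, the space $\HH_p = \overline{p\Omega}$ (computed in $\HH$) coincides with $\overline{(p|_{\HH_q})\Omega}$ (computed in $\HH_q$). The induction hypothesis therefore yields a unitary $U_2 : \HH_q \to \otimes_{p \in P \backslash \{p_0\}} \HH_p$ sending $\Omega$ to $\Omega^{\otimes (P \backslash \{p_0\})}$ and $\prod_{p \ne p_0} X_p|_{\HH_q}$ to $\otimes_{p \ne p_0} X_p|_{\HH_p}$ for each choice function $X$ on $P \backslash \{p_0\}$. Set $U := (\mathbf{1}_{\HH_{p_0}} \otimes U_2) \circ U_1$. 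For any choice function $X$ on $P$ the factors $X_p$, $p \in P$, pairwise commute (distinct $p$'s meet to $0_\HH$, hence lie in each other's commutants), so $\prod_{p \in P} X_p$ is unambiguous; under $U_1$ this product becomes $X_{p_0}|_{\HH_{p_0}} \otimes (\prod_{p \ne p_0} X_p)|_{\HH_q}$, and applying $\mathbf{1}_{\HH_{p_0}} \otimes U_2$ turns it into $\otimes_{p \in P} X_p|_{\HH_p}$. A parallel direct check gives $U\Omega = \Omega^{\otimes P}$.

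For uniqueness, any unitary $V$ satisfying the stated conditions must map $\prod_{p \in P} X_p \Omega$ to $\otimes_{p \in P} X_p \Omega$ for every choice function $X$ on $P$. Since $\{Z\Omega : Z \in p\}$ is total in $\HH_p$ by definition of the latter, the vectors $\otimes_{p \in P} X_p \Omega$ span a total subset of $\otimes_{p \in P} \HH_p$; pulling back by the unitary $U$ just constructed shows that $\{\prod_{p \in P} X_p \Omega : X \text{ a choice function on } P\}$ is total in $\HH$, forcing $V = U$. The only mildly delicate point is the identification of $\HH_p$ as computed in the nested unital factorizations $(\BBB, \Omega)$ and $(\BBB_q|_{\HH_q}, \Omega)$, which is handled by the inequality $p \leq q$ together with the local product decomposition~\eqref{identification}; everything else is bookkeeping.
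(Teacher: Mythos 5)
Your proof is correct and follows essentially the same route as the paper's, which likewise sketches an induction based on Proposition~\ref{proposition:restriction-of-noise-factorization} for existence and settles uniqueness by observing that the stipulation determines the isomorphism on a total set. One inessential slip: $\HH\ne\{0\}$ does not force $0_\HH\ne 1_\HH$ (a one-dimensional $\HH$ has $0_\HH=1_\HH$, in which case the only partition of unity is the empty one and the claim is degenerate), but this does not affect your argument.
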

\begin{proof}
An inductive argument utilising Proposition~\ref{proposition:restriction-of-noise-factorization} for existence. Uniqueness can be argued directly: the stipulation determines the unitary isomorphism uniquely on a total set. Alternatively, both existence and uniqueness are seen at once following the line of the proof of Proposition~\ref{quantum-independence}, \ref{quantum-independence:A} $\Rightarrow$ \ref{quantum-independence:C}. 
\end{proof}

%

Via the unitary isomorphism of Proposition~\ref{rmk:parition-of-unity-tensor-decomposition} 
 we
\begin{equation}\label{identification-another}
\text{\fbox{henceforth silently identify $\HH$ with  $\otimes_{p\in P}\HH_p$,}}
\end{equation}
hence  an $x\in \BBB$ with $\otimes_{p\in P}(p\land x)\vert_{\HH_p}$, $\HH_x$ with $\otimes_{p\in P}\HH_{p\land x}$ and  $\phi_x$ with $\otimes_{p\in P}\phi_{p\land x}\vert_{\HH_p}$. Naturally, for any finite $Q\subset \BBB\backslash \{0_\HH\}$ satisfying $x\land y=0_\HH$ for $x\ne y$ from $Q$, we also
\begin{equation}\label{identification-another'}
\text{\fbox{henceforth  silently identify $\HH_{\lor Q}$ with $\otimes_{q\in Q}\HH_q$}}
\end{equation}
(formally through utilising first Proposition~\ref{proposition:restriction-of-noise-factorization} with $x=\lor Q$ and then Proposition~\ref{rmk:parition-of-unity-tensor-decomposition} for this restricted unital factorization).

\begin{remark}\label{remark:connection}
In virtue of \eqref{identification-another}, unital factorizations may be viewed as a certain abstraction of  continuous products of pointed Hilbert spaces \cite[Definition~6d6]{tsirelson-nonclassical}/continuous tensor product systems of Hilbert spaces with a unit \cite[Definitions~3.1 and~3.6]{liebscher}. In this sense Question~\ref{question} corresponds to \cite[Question~6d7]{tsirelson-nonclassical}.  Directly from the definition they are also an abstraction of continuous $\mathbb{C}$-expected Bernoulli shifts \cite[Definition~3.1.2]{HKK} in the case of a ``commuting past and future'' \cite[Subsection~4.3]{HKK}.
\end{remark}

\begin{corollary}\label{corollary:factorizable-and-partition-of-unity}
For all partitions of unity $P$ of $\BBB$, $\vert \Omega\rangle\langle \Omega\vert=\prod_{p\in P}X_p$ for some choice function $X$ on $P$ having as $X_p$ a minimal non-zero projection of $p$ for all $p\in P$, while 
\begin{equation*}
\left\langle \Omega,\prod_{p\in P}X_p\Omega\right\rangle=\prod_{p\in P}\langle \Omega,X_p\Omega\rangle
\end{equation*}
for all choice functions $X$ on $P$.
\end{corollary}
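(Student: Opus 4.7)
The plan is to transport everything through the unitary identification $\HH=\otimes_{p\in P}\HH_p$ with $\Omega=\Omega^{\otimes P}$ supplied by Proposition~\ref{rmk:parition-of-unity-tensor-decomposition}. Under this identification each factor $p\in P$ becomes (cf.~\eqref{identification-another}) $\BB(\HH_p)$ acting in the $p$th tensor slot, whence any $X_p\in p$ has the form
\begin{equation*}
X_p=Y_p\otimes\Bigl(\otimes_{q\in P\setminus\{p\}}\mathbf{1}_{\HH_q}\Bigr)
\end{equation*}
for a unique $Y_p\in \BB(\HH_p)$. In particular, operators attached to distinct elements of $P$ commute, and a straightforward induction in $\vert P\vert$ gives $\prod_{p\in P}X_p=\otimes_{p\in P}Y_p$.

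From here the multiplicative identity on the second line of the statement is an instance of the definition of the scalar product on a tensor product:
\begin{equation*}
\left\langle \Omega,\prod_{p\in P}X_p\Omega\right\rangle=\left\langle \otimes_{p\in P}\Omega,\otimes_{p\in P}Y_p\Omega\right\rangle=\prod_{p\in P}\langle \Omega,Y_p\Omega\rangle=\prod_{p\in P}\langle \Omega,X_p\Omega\rangle,
\end{equation*}
valid for any choice function $X$ on $P$.

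For the existence assertion, for each $p\in P$ let $E_p$ denote the rank-one projection onto $\mathbb{C}\Omega$ inside $\HH_p$ and set $X_p:=E_p\otimes(\otimes_{q\ne p}\mathbf{1}_{\HH_q})$. Being rank-one in the $p$th slot and trivial elsewhere, each $X_p$ is a minimal non-zero projection of the type $\I$ factor $p\cong\BB(\HH_p)$. The same tensor bookkeeping then yields
\begin{equation*}
\prod_{p\in P}X_p=\otimes_{p\in P}E_p=\vert \Omega^{\otimes P}\rangle\langle \Omega^{\otimes P}\vert=\vert \Omega\rangle\langle \Omega\vert,
\end{equation*}
as required. (Equivalently, one could apply Proposition~\ref{proposition:factorizable-myriad}\ref{proposition:factorizable-myriad:vi} separately to each factor $p$ and take $X_p=\phi_{p'}$, which is a minimal projection of $p$; the global tensor decomposition then again reidentifies the product.)

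The substantive content of the argument is entirely packaged in Proposition~\ref{rmk:parition-of-unity-tensor-decomposition}, so I do not anticipate a genuine obstacle. The only point requiring care is to translate ``local'' minimal projections into the global tensor picture correctly, which is routine once the tensor identification is in force.
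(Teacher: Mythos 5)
Your proposal is correct and follows the paper's own route: the paper likewise deduces the corollary in one line from Proposition~\ref{rmk:parition-of-unity-tensor-decomposition}, observing that under the identification \eqref{identification-another} one has $\vert\Omega\rangle\langle\Omega\vert=\otimes_{p\in P}\vert\Omega\rangle\langle\Omega\vert$ with each tensor factor a minimal non-zero projection of the corresponding $p$, and the scalar-product identity is the standard multiplicativity across tensor slots. Your write-up merely spells out the tensor bookkeeping that the paper leaves implicit.
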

\begin{proof}
From Proposition~\ref{rmk:parition-of-unity-tensor-decomposition} we get $\vert \Omega\rangle\langle \Omega\vert=\otimes_{p\in P}\vert \Omega\rangle\langle \Omega\vert$ up to the identification \eqref{identification-another}. 
\end{proof}
Corollary~\ref{corollary:factorizable-and-partition-of-unity} brings our definition of factorizability fully in line with \cite[Definitions~2.3 and 5.2]{araki-woods}, cf. also \cite[Lemma~2.6]{araki-woods} \cite[p.~87]{vershik-tsirelson}. A key observation concerning the family $\Phi$  is the content of

\begin{proposition}\label{proposition:commuting-projections}
 The family $\Phi$ of projections in $\HH$ is commuting, moreover,
\begin{equation}\label{phi-intersection}
\phi_x\phi_y=\phi_y\phi_x=\phi_x\land\phi_y=\phi_{x\land y},\quad \{x,y\}\subset \BBB.
\end{equation}
The map $(\BBB\ni x \mapsto \phi_x)$ is injective.
\end{proposition}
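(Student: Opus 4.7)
The plan is to derive both claims of the proposition by a single tensor-product decomposition associated with the finest partition that $x$ and $y$ jointly generate.

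Fix $\{x,y\}\subset\BBB$. By the distributivity of $\BBB$, the family
\[Q:=\{x\land y,\, x\land y',\, x'\land y,\, x'\land y'\}\setminus\{0_\HH\}\]
has pairwise zero meets, and $\lor Q=(x\lor x')\land (y\lor y')=1_\HH$, so $Q$ is a partition of unity of $\BBB$. Through the identification \eqref{identification-another'} one has $\HH=\otimes_{q\in Q}\HH_q$, and for each $z\in\{x,y,x\land y\}$ the element $z$ identifies with $\otimes_{q\in Q}(q\land z)\vert_{\HH_q}$, hence $\phi_z=\otimes_{q\in Q}\phi_{q\land z}\vert_{\HH_q}$. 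By construction every $q\in Q$ lies either below $x$ or below $x'$, and likewise either below $y$ or below $y'$, so $q\land z\in\{0_\HH,q\}$ for each $z\in\{x,y,x\land y\}$; accordingly, each tensor factor $\phi_{q\land z}\vert_{\HH_q}$ is either $\vert\Omega\rangle\langle\Omega\vert$ or $\mathbf{1}_{\HH_q}$.

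These two projections commute (one contains the other), so the products $\phi_x\phi_y$ and $\phi_y\phi_x$ agree tensor factor by tensor factor, yielding commutativity of $\Phi$. Moreover, for each $q\in Q$ one has $q\leq x\land y$ iff $q\leq x$ and $q\leq y$, which on the level of tensor factors reads
\[\phi_{q\land x}\vert_{\HH_q}\cdot\phi_{q\land y}\vert_{\HH_q}=\phi_{q\land(x\land y)}\vert_{\HH_q};\]
assembling across $Q$ gives $\phi_x\phi_y=\phi_{x\land y}$. The equality with the meet $\phi_x\land\phi_y$ is then automatic for commuting projections (whose product is the orthogonal projection onto the intersection of their ranges), establishing \eqref{phi-intersection}.

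For injectivity I would first observe the non-degeneracy fact that any $p\in\BBB\setminus\{0_\HH\}$ satisfies $\dim\HH_p\geq 2$: otherwise $\HH_p=\mathbb{C}\Omega$, whereupon the identification \eqref{identification} forces $p=1_{\HH_p}\otimes 0_{\HH_{p'}}=0_\HH$, a contradiction. Hence $\mathbf{1}_{\HH_q}\neq\vert\Omega\rangle\langle\Omega\vert$ for every $q\in Q$. If now $\phi_x=\phi_y$, the tensor-factor description forces $q\leq x\Leftrightarrow q\leq y$ for each $q\in Q$, ruling out both $x\land y'\in Q$ and $x'\land y\in Q$; thus $x\land y'=0_\HH=x'\land y$, whence $x=x\land (y\lor y')=x\land y=y$ by distributivity. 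I do not anticipate a serious obstacle beyond this dimensional non-degeneracy check, which is what makes the projections $\phi_p$ genuinely distinguish non-zero elements of $\BBB$ from $0_\HH$ and thereby powers the injectivity argument.
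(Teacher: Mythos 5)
Your proof is correct and is essentially the paper's own argument made explicit: the paper disposes of the whole proposition as ``immediate from \eqref{identification-another}'', i.e.\ from precisely the partition-of-unity tensor decomposition $\phi_z=\otimes_{q\in Q}\phi_{q\land z}\vert_{\HH_q}$ that you use (note that for a partition of unity the relevant identification is \eqref{identification-another} rather than \eqref{identification-another'}, though the two coincide here since $\lor Q=1_\HH$). Your dimensional non-degeneracy observation ($\dim\HH_p\geq 2$ for $p\neq 0_\HH$) is exactly the point that makes the injectivity ``immediate'', and it is good that you spelled it out.
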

 \begin{proof}
 Immediate from \eqref{identification-another}.
 \end{proof}

\subsection{Multiplicative vectors}The following concept will feature prominently in what follows.
\begin{definition}\label{definition:multiplicative-vector}
A multiplicative vector  is an element $\xi$ of $\HH$ such that $\langle \Omega,\xi\rangle=1$ and such that for all $x\in \BBB$, $\xi=\phi_x(\xi)\otimes \phi_{x'}(\xi)\in \HH_x\otimes \HH_{x'}= \HH$. 
\end{definition}
$\Omega$ is always a multiplicative vector.
\begin{example}\label{example:noise-boolean-contd}
In the context of Example~\ref{example:noise-boolean} a multiplicative vector of $(B^\uparrow,\mathbbm{1})$ is nothing other than a square-integrable multiplicative integral of $B$, i.e. a $\xi\in \LLL^2(\PP)$ such that $\PP[\xi]=1$ and $\xi=\PP[\xi\vert x]\PP[\xi\vert x']$ for all $x\in B$.
\end{example}
 If $\xi$ is a multiplicative vector, then for all $x\in \BBB$ so too is $\phi_x(\xi)$ and (consequently) $\xi=\otimes_{p\in P}\phi_p(x)\in \otimes_{ p\in P}\HH_p=\HH$ for all partitions of unity $P$ of $\BBB$.

The definition of a multiplicative vector involves the family of projections $(\phi_x)_{x\in \BBB}$, and the decomposition \eqref{identification}, but we can give a more direct characterization, which exposes the fundamental connection between factorizable and multiplicative vectors.
\begin{proposition}\label{proposition:multiplicative-factorizable}
The multiplicative vectors of $(\BBB,\Omega)$ are precisely the factorizable vectors  $\xi$ of $\BBB$ for which $\langle \Omega,\xi\rangle=1$. 
\end{proposition}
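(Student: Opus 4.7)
The plan is to prove the two inclusions separately via the canonical identification $\HH = \HH_x \otimes \HH_{x'}$ from \eqref{identification} and the equivalences of Proposition~\ref{proposition:factorizable-myriad}. The key preliminary observation is that under this identification, $\phi_x = \mathbf{1}_{\HH_x} \otimes \vert\Omega\rangle\langle\Omega\vert$ and $\phi_{x'} = \vert\Omega\rangle\langle\Omega\vert \otimes \mathbf{1}_{\HH_{x'}}$; a short calculation then shows that for any pure tensor $\xi = a \otimes b \in \HH_x \otimes \HH_{x'}$ one has, modulo the natural identifications $\HH_x \otimes \mathbb{C}\Omega = \HH_x$ and $\mathbb{C}\Omega \otimes \HH_{x'} = \HH_{x'}$, the relation $\phi_x(\xi) \otimes \phi_{x'}(\xi) = \langle\Omega, a\rangle\langle\Omega, b\rangle (a \otimes b) = \langle\Omega,\xi\rangle\,\xi$. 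Consequently the defining equation $\xi = \phi_x(\xi) \otimes \phi_{x'}(\xi)$ of multiplicativity at $x$ is equivalent to $\xi$ being a pure tensor in $\HH_x \otimes \HH_{x'}$ together with $\langle\Omega,\xi\rangle = 1$.

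For the forward direction, given a multiplicative $\xi$, the condition $\langle\Omega,\xi\rangle = 1$ is immediate (so $\xi \ne 0$); moreover for each $x \in \BBB$ the equation $\xi = \phi_x(\xi) \otimes \phi_{x'}(\xi)$ realises $\xi$ as a pure tensor in $\HH_x \otimes \HH_{x'}$ with non-zero factors (since $\Vert\xi\Vert = \Vert\phi_x(\xi)\Vert\,\Vert\phi_{x'}(\xi)\Vert$). Hence $\xi/\Vert\xi\Vert = \Theta \otimes \Theta'$ for unit vectors $\Theta \in \HH_x$, $\Theta' \in \HH_{x'}$; combined with the facts that $x$ is carried to $1_{\HH_x} \otimes 0_{\HH_{x'}}$ and $x'$ to $0_{\HH_x} \otimes 1_{\HH_{x'}}$, this matches characterization~\ref{proposition:factorizable-myriad:iv} of Proposition~\ref{proposition:factorizable-myriad} applied to $\xi/\Vert\xi\Vert$, yielding factorizability of $\xi$.

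For the reverse direction, fix a factorizable $\xi$ with $\langle\Omega,\xi\rangle = 1$, write $\Xi := \xi/\Vert\xi\Vert$ and pick any $x \in \BBB$. Characterization~\ref{proposition:factorizable-myriad:vi} of Proposition~\ref{proposition:factorizable-myriad} applied to $\Xi$ produces minimal projections $P \in x$ and $P' \in x'$ with $\vert\Xi\rangle\langle\Xi\vert = PP'$. Under the identification $\HH = \HH_x \otimes \HH_{x'}$, the minimal non-zero projections of $x = 1_{\HH_x} \otimes 0_{\HH_{x'}}$ are precisely those of the form $\vert\eta\rangle\langle\eta\vert \otimes \mathbf{1}_{\HH_{x'}}$ (unit $\eta \in \HH_x$), and analogously for $x'$; hence $PP' = \vert\eta \otimes \eta'\rangle\langle\eta \otimes \eta'\vert$, so $\Xi$ (and then $\xi$) is a pure tensor in $\HH_x \otimes \HH_{x'}$. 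The preliminary observation closes the argument, since $\langle\Omega,\xi\rangle = 1$ forces $\phi_x(\xi) \otimes \phi_{x'}(\xi) = \xi$.

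The main subtlety is the consistent bookkeeping of the canonical identification $\HH = \HH_x \otimes \HH_{x'}$, in particular reconciling the two readings of $\phi_x(\xi)$ and $\phi_{x'}(\xi)$ in the defining equation of multiplicativity: as images of projections in $\HH$ on the one hand, and as elements of the tensor factors $\HH_x$ and $\HH_{x'}$ on the other.
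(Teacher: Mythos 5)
Your proof is correct. The forward implication is essentially the paper's: both exhibit $\xi$ as a pure tensor under the identification \eqref{identification} and then invoke a characterization from Proposition~\ref{proposition:factorizable-myriad} (you use \ref{proposition:factorizable-myriad:iv}; the paper uses \ref{proposition:factorizable-myriad:v}, writing $\vert\xi\rangle\langle\xi\vert$ as the product of the two rank-one factors tensored with identities). The converse is where you genuinely diverge. The paper starts from $\vert\xi\rangle\langle\xi\vert=YY'$ with $Y\in x$, $Y'\in x'$, computes $\xi=YY'\Omega$, identifies $\phi_x(\xi)=\langle\Omega,Y'\Omega\rangle Y\Omega$ and $\phi_{x'}(\xi)=\langle\Omega,Y\Omega\rangle Y'\Omega$ by pairing against the total set $x\Omega$, and multiplies out. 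You instead pass through characterization~\ref{proposition:factorizable-myriad:vi}, note that the minimal projections of $x=1_{\HH_x}\otimes 0_{\HH_{x'}}$ and of $x'$ are exactly $\vert\eta\rangle\langle\eta\vert\otimes\mathbf{1}_{\HH_{x'}}$ and $\mathbf{1}_{\HH_x}\otimes\vert\eta'\rangle\langle\eta'\vert$, conclude that $\xi$ is a pure tensor in the $\Omega$-adapted decomposition, and finish with your preliminary identity $\phi_x(\xi)\otimes\phi_{x'}(\xi)=\langle\Omega,\xi\rangle\,\xi$ for pure tensors. This is arguably cleaner: the one identity drives both directions, and you correctly keep the $\Omega$-adapted decomposition separate from the $\xi$-adapted one, which is precisely the pitfall the paper's more computational route is designed to avoid. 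One cosmetic caveat: the stated ``equivalence'' of $\xi=\phi_x(\xi)\otimes\phi_{x'}(\xi)$ with ``$\xi$ a pure tensor and $\langle\Omega,\xi\rangle=1$'' fails at $\xi=0$; this is harmless here, since $\langle\Omega,\xi\rangle=1$ is already part of the definition of a multiplicative vector and only the two correct one-way implications are actually used.
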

\begin{proof}
If $\xi$ is a multiplicative vector of $(\BBB,\Omega)$, then by definition $\langle\Omega,\xi\rangle=1$, while for all $x\in \BBB$, up to the identification \eqref{identification},
\begin{equation*}
\vert \xi\rangle\langle \xi\vert=(\vert \phi_x(\xi)\rangle\langle \phi_x(\xi)\vert)\otimes (\vert \phi_{x'}(\xi)\rangle\langle \phi_ {x'}(\xi)\vert)=\left((\vert \phi_x(\xi)\rangle\langle \phi_x(\xi)\vert)\otimes \mathbf{1}_{\HH_{x'}}\right)\left(\mathbf{1}_{\HH_x}\otimes  (\vert \phi_{x'}(\xi)\rangle\langle \phi_ {x'}(\xi)\vert)\right);
\end{equation*}
together with $(\vert \phi_x(\xi)\rangle\langle \phi_x(\xi)\vert)\otimes \mathbf{1}_{\HH_{x'}}\in 1_{\HH_x}\otimes 0_{\HH_{x'}}=x$ and $\mathbf{1}_{\HH_x}\otimes  (\vert \phi_{x'}(\xi)\rangle\langle \phi_ {x'}(\xi)\vert)\in 0_{\HH_x}\otimes 1_{\HH_{x'}}=x'$ it means that $\xi$ is a factorizable vector of $\BBB$. 

Now suppose $\xi$ is a factorizable vector of $\BBB$ and $\langle \Omega,\xi\rangle=1$. Pick any $x\in\BBB$. By definition $\vert \xi\rangle\langle \xi\vert=YY'$ for some $Y\in x$ and $Y'\in x'$. 
The condition $\langle \Omega,\xi\rangle=1$ entails 
\begin{equation}\label{fact-mult:0}
\xi=\vert \xi\rangle\langle \xi\vert\Omega=YY'\Omega
\end{equation} 
and then also 
\begin{equation}\label{fact-mult:i}
\langle  \Omega,Y\Omega\rangle\langle  \Omega,Y'\Omega\rangle=\langle\Omega, YY'\Omega\rangle=\langle\Omega,\xi\rangle=1
\end{equation} 
by the independence of $x$ and $x'$ under $\Omega$. Next, for all $X\in x$, 
\begin{equation*}
\langle X\Omega,\phi_x(\xi)\rangle=\langle X\Omega,\xi\rangle=\langle X\Omega,YY'\Omega\rangle=\langle \Omega,X^*Y\Omega\rangle\langle  \Omega,Y'\Omega\rangle=\langle X\Omega,\langle  \Omega,Y'\Omega\rangle Y\Omega\rangle;
 \end{equation*}
 $X\in x$ being arbitrary we deduce (from $\{\phi_x(\xi),Y\Omega\}\subset \HH_x$ and the totality of $x\Omega$ in $\HH_x$) that 
 \begin{equation}\label{fact-mult:ii}\phi_x(\xi)=\langle  \Omega,Y'\Omega\rangle Y\Omega.
 \end{equation} By the same token we obtain 
 \begin{equation}\label{fact-mult:iii}
 \phi_{x'}(\xi)=\langle  \Omega,Y\Omega\rangle Y'\Omega. 
 \end{equation}
 Combining \eqref{fact-mult:0}-\eqref{fact-mult:iii} we may compute, up to the identification \eqref{identification},
 \begin{equation*}
 \phi_x(\xi) \otimes \phi_{x'}(\xi)=\langle  \Omega,Y\Omega\rangle{\langle  \Omega,Y'\Omega\rangle}(Y\Omega)\otimes (Y'\Omega)=\langle  \Omega,Y\Omega\rangle{\langle  \Omega,Y'\Omega\rangle}YY'\Omega=\langle  \Omega,Y\Omega\rangle{\langle  \Omega,Y'\Omega\rangle}\xi=\xi,
 \end{equation*}
 rendering $\xi$ a multiplicative vector of $(\BBB,\Omega)$.
\end{proof}


\section{Spectrum of a unital factorization}\label{section:spectrum}
The concepts and results of this section for the commutative landscape can be found in large part in \cite{tsirelson};  in \cite{vidmar-noise} in the complementary part.

\begin{definition}
The von Neumann algebra generated on $\HH$ by $\Phi$ is denoted $\AA$. 
\end{definition}
By Proposition~\ref{proposition:commuting-projections} $\Phi$ is commuting, hence  so too is $\AA$. 
\begin{center}
\fbox{Henceforth $\HH$ is separable.}
\end{center}
Thus $\AA$ is an abelian von Neumann algebra acting on the separable Hilbert space $\HH$.

Spectrally resolving \cite[A.84]{dixmier-c-star} $\AA$ we get  a standard $\sigma$-finite measure space $(S,\Sigma,\mu)$, a measurable field of separable a.e.-$\mu$ non-zero Hilbert spaces $(\GG_s)_{s\in S}$ and a unitary isomorphism $\Psi:\HH\to \int^\oplus \GG_s\mu(\dd s)$ that carries $\AA$ onto the algebra of diagonalizable operators \cite[A.80]{dixmier-c-star} (which we identify naturally with  $\LLL^\infty(\mu)$) as a $^*$-isomorphism. This $^*$-isomorphism we shall denote by $\alpha$. Thus  $\alpha$ maps $\AA$ onto $ \LLL^\infty(\mu)$ bijectively. We call $(\mu,\Psi)$ briefly a spectrum for $(\BBB,\Omega)$.  One could insist on the measure $\mu$ being finite, but it is somewhat more forgiving to allow a $\sigma$-finite space.


\subsection{Spectral subspaces, spectral sets, spectral measures}
If $A\in \Sigma$, then $\int^\oplus_A \GG_s\mu(\dd s)$ is a closed linear subspace of $\int^\oplus \GG_s\mu(\dd s)$ that $\Psi$ pulls back onto a closed linear subspace of $\HH$ that we shall denote  $\HHH(A)$. 

For $x\in \BBB$, 
$\alpha(\phi_x)$, being a projection from $\LLL^\infty(\mu)$, is (multiplication by) $\mathbbm{1}_E$ for some a.e.-$\mu$ unique $E\in \Sigma$ that we denote by $S_x$ and call the spectral set of $x$; accordingly $\HHH(S_x)=\phi_x(\HH)=\HH_x$. 
Due to \eqref{phi-intersection},
\begin{equation}\label{spectral-sets:pi-system}
S_{x\land y}=S_x\cap S_y\text{ a.e.-$\mu$},\quad  \{x,y\}\subset \BBB.
\end{equation}
The $^*$-isomorphism $\alpha$ being onto $\LLL^\infty(\mu)$ we see that (a countable subset of) the collection $\{S_x:x\in \BBB\}$ generates $\Sigma$ up to $\mu$-trivial sets (a countable subset suffices, because $\mu$ is standard).  Since $\phi_{0_\HH}=\vert\Omega\rangle\langle\Omega\vert$ is a minimal non-zero projection of $\AA$, $S_{0_\HH}$ is an atom of $\mu$: $
S_{0_\HH}=\{\emptyset_S\}\text{ a.e.-$\mu$}$
for a unique $\emptyset_S\in S$ that is charged by $\mu$. In this notation $\HHH(\{\emptyset_S\})=\mathbb{C}\Omega$. 

For $g\in \HH$ we write $\mu_g:=\Vert \Psi(g)\Vert^2\cdot \mu$ (wherein the norm is taken pointwise on $S$, of course), which is a finite measure on $(S,\Sigma)$ with total mass $\Vert g\Vert^2$ satisfying $\mu_g(S_x)=\Vert \phi_x(g)\Vert^2$, $x\in \BBB$. More generally, for further $h\in \HH$, $\mu_{h,g}:=\langle \Psi(h),\Psi(g)\rangle\cdot\mu$ (the scalar product having been taken, again, pointwise on $S$) is a complex measure on $(S,\Sigma)$ verifying $\mu_{h,g}(S_x)=\langle \phi_x(h),\phi_x(g)\rangle=\langle \phi_x(h),g\rangle=\langle h,\phi_x(g)\rangle$, $x\in \BBB$.
\subsection{The counting map}

By $\mathfrak{F}$ let us denote the collection of all finite Boolean subalgebras of $\BBB$. Any $b\in \mathfrak{F}$ has its collection of atoms $\mathrm{at}(b)$, which is a partition of unity of $\BBB$. We introduce 
\begin{equation}\label{eq:Kb}
K_b:=\sum_{x \in \at(b)}\mathbbm{1}_{S\backslash S_{x'}}.
\end{equation}
 Due to \eqref{spectral-sets:pi-system}, for $\mu$-a.e. $s$, $K_b(s)$ is the number of atoms of $b$ comprising (which is to say, whose join is) the minimal element $x\in b$ such that $s\in S_x$, this minimal element $x$ let us denote by $\underline{b}(s)$. Thus $\underline{b}:S\to b$; and, for $\mu$-a.e. $s$, 
 \begin{equation}\label{eq:ad-def-K}
\forall x\in b: \text{$s\in S_x$ iff $x\supset \underline{b}(s)$.}
 \end{equation}
 If $\{b_1,b_2\}\subset \mathfrak{F}$ and $b_1\subset b_2$ then $\underline{b_2}\subset \underline{b_1}$ a.e.-$\mu$, accordingly $K_{b_2}\geq K_{b_1}$ a.e.-$\mu$. The family $(K_b)_{b\in \mathfrak{F}}$ is therefore directed upwards in the sense that for $\{b_1,b_2\}\subset \mathfrak{F}$ there is $b\in \mathfrak{F}$ such that $K_b\geq K_{b_1}\lor K_{b_2}$ a.e.-$\mu$ (namely, we can take $b=b_1\lor b_2$). We define 
 \begin{equation}\label{def:counting-map}
 K:=\text{$\mu$-$\esssup_{b\in \mathfrak{F}}$}\, K_b,
 \end{equation} which is thus a $\Sigma$-measurable map valued in $\mathbb{N}_0\cup \{\infty\}$, that we shall refer to as the counting map. Of course $\{K=0\}=S_{0_\HH}=\{\emptyset_S\}$ a.e.-$\mu$. 
 By the very definition \eqref{def:counting-map} of $K$, the fact that $(K_b)_{b\in \mathfrak{F}}$ is directed upwards and the properties of an essential supremum there exists a $\uparrow$ sequence $(b_n)_{n\in \mathbb{N}}$ in $\mathfrak{F}$ such that 
\begin{equation}\label{xK-limit}
K=\text{$\uparrow$-$\lim_{n\to\infty}$}K_{b_n}\text{ a.e.-$\mu$};
\end{equation}
any such sequence shall be said to be exhausting and we may ask of it for $b_1$ to be whichever member of $\mathfrak{F}$ that we so please.

We denote by $\mu^{(1)}$ the  restriction of $\mu$ to $\{K=1\}$.
 
 \subsection{Spectral projections and spectral independence}\label{subsection:spectral-everything}
 Let $x\in\BBB$. Recalling Proposition~\ref{proposition:restriction-of-noise-factorization},  in the obvious notation, $(\mu_x,\Psi_x):=(\mu\vert_{S_x},\Psi\vert_{\HH_x}\vert_{S_x})$ is a spectrum for $(\BBB_x\vert_{\HH_x},\Omega)$, i.e. $\Psi_x:\HH_x\to\int^\oplus \GG_s\mu_x(\dd s)$ is a spectral resolution of the commutative von Neumann algebra $\AA_x={\Phi_x}''$ generated by $\Phi_x=\{\phi_u\vert_{\HH_x}:u\in \BBB_x\}$ on $\HH_x$.  

Still $x\in \BBB$. We have $\Phi=\Phi_x\dot{\otimes }\Phi_{x'}:=\{\phi_u\vert_{\HH_x}\otimes \phi_v\vert_{\HH_{x'}}:(u,v)\in \BBB_x\times\BBB_{x'}\}$  and so $\AA=\AA_x\otimes \AA_{x'}$. Therefore, a spectrum for $(\BBB,\Omega)$ is given by the pair $(\mu_x\times \mu_{x'},\Psi_x\otimes \Psi_{x'})$, $\Psi_x\otimes \Psi_{x'}$ mapping $\HH=\HH_x\otimes \HH_{x'}$  onto the direct integral of tensor products \cite[Proposition~II.1.10]{dixmier1981neumann} $\int^\oplus \GG_s\otimes \GG_{s'}(\mu_x\times\mu_{x'})(\dd (s,s'))=\int ^\oplus\GG_s\mu_x(\dd s)\otimes \int^\oplus \GG_{s'}\mu_{x'}(\dd s')$ (equality up to the natural identification). Furthermore, for $(u,v)\in \BBB_x\times \BBB_{x'}$  
\begin{equation}\label{eq:spectral-in-product}
\text{the spectral set associated to $u\lor v$ relative to $(\mu_x\times \mu_{x'},\Psi_x\otimes \Psi_{x'})$  is equal to $S_u\times S_v$ a.e.-$(\mu_x\times \mu_{x'})$.}
\end{equation} 
But we also continue to have the given standing spectrum $(\mu,\Psi)$ for $(\BBB,\Omega)$. In consequence, by the ``uniqueness of spectral resolutions of abelian von Neumann algebras'' \cite[A.85]{dixmier-c-star}, there exists a 
\begin{equation}\label{eq:spectral-resolve-mod0}
\text{mod-$0$ isomorphism $\psi$ between  $\mu_{x}\times \mu_{x'}$ and a $\sigma$-finite measure $\mu'$ equivalent to $\mu$,}
\end{equation}
 together with a 
\begin{equation}\label{eq:spectral-resolve-spaces}
 \text{$\psi$-isomorphism \cite[A.70]{dixmier-c-star} $\zeta$ between  $(\GG_s\otimes \GG_{s'})_{(s,s')\in S_x\times S_{x'}}$ and $(\GG_s)_{s\in S}$ (modulo negligible sets),}
 \end{equation}
 such that  $\Psi\circ (\Psi_x\otimes\Psi_{x'})^{-1}$ is  the composition of $\int^\oplus \zeta(s,s')(\mu_x\times \mu_{x'})(\dd(s,s'))$ and of the canonical unitary isomorphism \cite[Remark on p.~170]{dixmier1981neumann} between $\int^\oplus \GG_s\mu'(\dd s)$ and $\int^\oplus \GG_s\mu(\dd s)$. In particular, thanks to \eqref{eq:spectral-in-product}, the mod-$0$ isomorphism $\psi$ verifies
  \begin{equation}\label{eq:spectral-resolve-sets}
\text{$\psi(S_u\times S_v)=S_{u\lor v}$ a.e.-$\mu$ for all $(u,v)\in \BBB_x\times \BBB_{x'}$}.
  \end{equation}

By \eqref{eq:spectral-resolve-mod0} and \eqref{eq:spectral-resolve-sets}, considering $(\pr_{S_x},\emptyset_S):S_x\times S_{x'}\to S_x\times S_{x'}$, where $\pr_{S_x}$ is the canonical projection of $S_x\times S_{x'}$ onto its first factor $S_x$ and where we treat $\emptyset_S$ as the constant map on $S_x\times S_{x'}$, we see that there exists an a.e.-$\mu$ unique $\Sigma/\Sigma$-measurable map $\pr_x$ such that $\pr_x^{-1}(F)$ is $\mu$-negligible for $\mu$-negligible $F$ and such that 
 \begin{equation*}
 \pr_x^{-1}(S_y)=S_{y\lor x'}\text{ a.e.-$\mu$ for all $y\in \BBB$ (equivalently, all $y\in \BBB_x$)}.
 \end{equation*}
Here for the uniqueness one observes that, by standardness, a countable subcollection of $\{S_u:u  \in \BBB_x\}$ separates the points of a $\mu_x$-conegligible set. Let us call $\pr_x$ the spectral projection onto $x$; plainly it takes its values in $S_x$ a.e.-$\mu$ and the $\mu$-complete  $\sigma$-field which it generates is $\pr_x^{-1}(\Sigma)\lor \mathcal{N}_\mu=\sigma(\{S_{u\lor x'}:u\in \BBB_x\}) \lor \mathcal{N}_\mu=:\Sigma_x$, $\mathcal{N}_\mu:=\{E\in \Sigma:\mu(E)=0\text{ or }\mu(S\backslash E)=0\}$ being the $\mu$-trivial sets.   

Incidentally, \eqref{eq:spectral-resolve-spaces} reveals a kind of ``local tensor product form'' of the spectral resolution reflecting the local product form of $(\BBB,\Omega)$ noted just after Proposition~\ref{proposition:restriction-of-noise-factorization}

\begin{definition}
 A probability $\nu$  on $(S,\Sigma)$ shall be called a spectral independence probability if $\nu$ is absolutely continuous w.r.t. $\mu$, $\nu(\{\emptyset_S\})>0$ and $\pr_x$ is independent of $\pr_{x'}$ under $\nu$ for all $x\in \BBB$. 
 \end{definition}
 A trivial spectral independence probability always exists, namely $\delta_{\emptyset_S}$ (strictly speaking we should restrict this Dirac measure --- defined by default on the whole of the power set of $S$ --- to $\Sigma$, but let us suffer this slight abuse of notation, as we do many others).

Directly from \eqref{spectral-sets:pi-system} and the definition,
 \begin{equation}\label{projections-composition}
\pr_y\circ\pr_x= \pr_{x\land y}=\pr_x\circ \pr_y\text{ a.e.-$\mu$ for }\{x,y\}\subset \BBB, \,  \pr_{0_\HH}=\emptyset_S\text{ a.e.-$\mu$} \text{ and }\pr_{1_\HH}=\mathrm{id}_S\text{ a.e.-$\mu$},
 \end{equation} 
which entails in particular that for a spectral independence probability $\nu$ the spectral projections $\pr_p$, $p\in P$, are independent under $\nu$ for all partitions of unity $P$ of $\BBB$. 

A straightforward  connection between the counting map $K$ and the spectral projections is 
 \begin{equation}\label{K-local}
 K=\sum_{x\in P}K(\pr_x)\text{ a.e.-$\mu$, indeed }K_b=\sum_{x\in P}K_b(\pr_x)\text{ a.e.-$\mu$ for all }b\in \mathfrak{F}\text{ for which }P\subset b,
 \end{equation}
this being so for all partitions of unity $P$ of $\BBB$, which we see from \eqref{eq:Kb} and \eqref{xK-limit}. 
By \eqref{projections-composition} the first equality of \eqref{K-local} is still true if $K$ is replaced with $K(\pr_y)$ for any $y\in \BBB$; if further $z\in \BBB_y$, we get $K(\pr_y)\geq K(\pr_y\circ\pr_z)= K(\pr_z)$ a.e.-$\mu$. Thus
\begin{equation}\label{K:nondecreasing}
\text{$K(\pr_u)\leq K(\pr_v)$ a.e.-$\mu$ for $u\subset v$ from $\BBB$};
\end{equation}
likewise, for $b\in \mathfrak{F}$ and $x\in b$:  $K_b(\pr_x)$ is $\mu$-a.e. $\uparrow$ in $x$, and $\mu$-a.e. $x\land \underline{b}$ is the join of precisely $K_b(\pr_x)$ atoms of $b$.
 
 Another property that we shall find useful is that 
 \begin{equation}\label{projections-and-subspaces}
  \HHH(\cap_{p\in P}\pr_p^{-1}(E_p))=\otimes_{p\in P}\HHH(E_p\cap S_p)=\otimes_{p\in P}[\HHH(E_p)\cap \HH_p]
 \end{equation}
 for all $E\in \Sigma^ P$ for all partitions of unity $P$ of $\BBB$. Indeed \eqref{projections-and-subspaces} is evidently true for $E_p$ of the form $S_{x_p}$ with $x_p\in \BBB_p$ as $p$ ranges over $P$, and extends easily to the general case by an application of Dynkin's lemma.

 \subsection{Spectrum of Fock unital factorizations}\label{subsection:spectrum-of-Fock}
  Consider the unital factorization $(\mathrm{Fock}(\int^\oplus\FF_s\nu(\dd s)),1)$ of Definition~\ref{definition:noise-factorization} situated on the Hilbert space $\boldsymbol{\Upgamma}_s(\int^\oplus\FF_s\nu(\dd s))$. Let $\hat\nu:=\sum_{n\in \mathbb{N}_0}(n!)^{-1}(q_n)_\star  \nu^n\vert_{(T^n)_\ne}$ be the symmetric measure over $\nu$, defined on the measurable space $((2^T)_{\mathrm{fin}},\TT_s)$, $q_n:(T^n)_{\ne}\to {T\choose n}$ being, for $n\in \mathbb{N}_0$, the map that sends an $n$-tuple with pairwise distinct entries to  its range, and the $\sigma$-field $\TT_s$ on $(2^T)_{\mathrm{fin}}$ being the largest one relative to which $q_n$ is $\Sigma^{\otimes n}\vert_{(T^n)_{\ne}}$-measurable for all $n\in \mathbb{N}_0$. We complete $((2^T)_{\mathrm{fin}},\TT_s,\hat \nu)$ to get the  standard measure space $((2^T)_{\mathrm{fin}},\tilde\TT,\tilde\nu)$. Further, we define a $\tilde\nu$-measurable field of separable Hilbert spaces $(\tilde\FF_A)_{A\in (2^T)_{\mathrm{fin}}}$ by setting 
  \begin{equation}\label{equation:tensor-product}
  \tilde\FF_A:=\otimes_{s\in A}\FF_s,\quad A\in (2^T)_{\mathrm{fin}},
  \end{equation}
  endowing it with the natural ``Fock'' $\tilde\nu$-measurable field structure characterized \cite[Propositions~II.1.2 and~II.1.4]{dixmier1981neumann} by the fact that the exponential vectors $E(u)$, $u\in\int^{\oplus}\FF_s\nu(\dd s)$, are total in $\int^\oplus \tilde\FF_A\tilde\nu(\dd A)$; here 
  \begin{equation*}
  E(u)_A:=\otimes_{a\in A}u(a),\quad A\in (2^T)_{\mathrm{fin}}.
  \end{equation*}
  
An elementary computation reveals that
  \begin{equation*}
  \langle E(u),E(v)\rangle=\int \prod_{a\in A}\langle u(a),v(a)\rangle \tilde\nu(\dd A)=e^{\int \langle u(s),v(s)\rangle\nu_c(\dd s)}\prod_{a\in\at(\nu)}\left(1+\nu(\{a\})\langle u(a),v(a)\rangle\right),\quad \{u,v\}\subset \int^{\oplus}\FF_s\nu(\dd s).
  \end{equation*}
  Hence, for $W\in \TT$, there exists a unique unitary isomorphism between $\left(\int^\oplus \tilde\FF_A\widetilde{\nu\vert_W}(\dd A)\right)\otimes\left(\int^\oplus \tilde\FF_A\widetilde{\nu\vert_{T\backslash W}}(\dd A)\right)$ and $\int^\oplus \tilde\FF_A\tilde\nu(\dd A)$, which carries $E(u)\otimes E(v)$ to $E(u\cup v)$ for all $u\in  \int^{\oplus}\FF_s[\nu\vert_W](\dd s)$ and all $v\in  \int^{\oplus}\FF_s[\nu\vert_{T\backslash W}](\dd s)$, the association being scalar product-preserving between total sets. By the very same token and \eqref{equation:scalar-product-in-exponential-Fock-space} there exists a unique unitary isomorphism $\tilde\Psi$ between $\boldsymbol{\Upgamma}_s(\int^\oplus\FF_s\nu(\dd s))$ and $\int^\oplus \tilde\FF_A\tilde\nu(\dd A)$, which sends $e_{\int^\oplus\FF_s\nu(\dd s)}(u)$ to $E(u)$ for all $u\in \int^\oplus\FF_s\nu(\dd s)$. Since the system $\{(2^W)_{\mathrm{fin}}: W\in \TT\}$ is generating for $\tilde\TT$ up to $\tilde\nu$-trivial sets \cite[Lemma~8.6(ii)]{vidmar-noise} we deduce that the pair $(\tilde{\nu}, \tilde\Psi)$ constitutes a spectral resolution for $(\mathrm{Fock}(\int^\oplus\FF_s\nu(\dd s)),1)$ with spectral sets, spectral projections   and counting map  given by
  \begin{equation*}
 \tilde S_{F_{\int ^\oplus\FF_s\nu(\dd s)}(W)}=(2^W)_{\mathrm{fin}}\text{ a.e.-$\tilde\nu$ for } W\in \TT,
  \end{equation*}
  \begin{equation*}
  \tilde\pr_{F_{\int ^\oplus\FF_s\nu(\dd s)}(W)}(A)=A\cap W\text{ for $\tilde\nu$-a.e. $A$ for all } W\in \TT,
  \end{equation*}
  and
    \begin{equation*}
\tilde K(A)=\vert A\vert \text{ for $\tilde\nu$-a.e. $A$},
  \end{equation*}
  respectively (using a $\tilde{}$ to indicate that the objects pertain to $(\tilde\nu,\tilde\Psi)$).   
  
  Precisely the same spectral resolution $(\tilde\nu,\tilde\Psi)$ as rendered above works for any unital subfactorization $(\FFF,1)$ of $(\mathrm{Fock}(\int^\oplus\FF_s\nu(\dd s)),1)$ provided  the algebra $\TT_\FFF:=\{W\in\TT:F_{\int ^\oplus\FF_s\nu(\dd s)}(W)\in \FFF\}$ generates $\TT$  (just because the system $\{(2^W)_{\mathrm{fin}}: W\in \TT_\FFF\}$ then still generates the domain of $\tilde\nu$ up to $\tilde\nu$-trivial sets \cite[Lemma~8.6(iii)]{vidmar-noise}). 
  
  We may say that the local tensor product form of the spectral resolution, being there always \eqref{eq:spectral-resolve-spaces}, becomes in the case of  a Fock unital factorization ``global''  \eqref{equation:tensor-product}.

\section{Characterizations of Fock form for unital factorizations}\label{section:fock-structure-completneess}
As a preliminary observation we note that thanks to separability of $\HH$, completeness of $(\BBB,\Omega)$ is just ``sequential completeness''.
\begin{proposition}
$\BBB$ is complete iff $\lor_{n\in \mathbb{N}}x_n\in\BBB$ for every $\uparrow$ sequence $(x_n)_{n\in \mathbb{N }}$ in $\BBB$.
\end{proposition}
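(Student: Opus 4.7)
The forward direction is immediate from the definition of completeness (an $\uparrow$ sequence is a special family). For the converse, I would assume $\BBB$ is closed under $\uparrow$-sequential joins, pick an arbitrary family $(x_\alpha)_{\alpha \in \mathfrak{A}}$ in $\BBB$, and write $y := \lor_{\alpha\in \mathfrak{A}} x_\alpha$ for its join computed in $\hat\HH$. The goal is to show $y \in \BBB$, and the crux will be producing a \emph{countable} $\mathfrak{A}_0 \subset \mathfrak{A}$ with $\lor_{\alpha \in \mathfrak{A}_0} x_\alpha = y$ in $\hat\HH$; once this is in hand, the conclusion follows by exhausting $\mathfrak{A}_0$ with finite joins (which remain in $\BBB$ by the sublattice property) and invoking the standing hypothesis.

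To extract $\mathfrak{A}_0$ I would exploit separability of $\HH$, which makes the unit ball of $\BB(\HH)$ metrizable and compact in the weak operator topology, hence SOT-separable on bounded subsets. Fix a countable SOT-dense set $D$ in the unit ball of $y$. Since $y$ is the von Neumann algebra generated by $\cup_\alpha x_\alpha$, it coincides with the SOT-closure of the $*$-subalgebra $\cup_{F \in \mathcal{F}} y_F$, where $\mathcal{F}$ is the directed set of finite subsets of $\mathfrak{A}$ and $y_F := \lor_{\alpha\in F} x_\alpha$. Each $d \in D$ is therefore an SOT-limit of some sequence $(d_n^d)_n$ with $d_n^d \in y_{F_n^d}$ for finite $F_n^d \subset \mathfrak{A}$. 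Setting $\mathfrak{A}_0 := \cup_{d \in D} \cup_n F_n^d$ produces a countable subset of $\mathfrak{A}$ such that the von Neumann algebra $y_0$ generated by $\{x_\alpha : \alpha \in \mathfrak{A}_0\}$ is SOT-closed, contains every $d_n^d$, hence every $d \in D$, hence (by SOT-density of $D$ in the unit ball of $y$ and linearity) all of $y$. Thus $y_0 = y$.

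Enumerating $\mathfrak{A}_0 = \{\alpha_1, \alpha_2, \ldots\}$ and setting $z_n := x_{\alpha_1} \lor \cdots \lor x_{\alpha_n}$ gives an $\uparrow$ sequence in $\BBB$ (finite joins coincide in $\BBB$ and $\hat\HH$); by hypothesis $\lor_n z_n \in \BBB$, and this $\hat\HH$-join is the von Neumann algebra generated by $\cup_n x_{\alpha_n}$, namely $y_0 = y$. Hence $y \in \BBB$, proving completeness. The only nontrivial step is the countable-cofinal extraction, which is standard but leans essentially on separability of $\HH$ (established as a standing assumption at the start of Section~\ref{section:spectrum}); the remainder is routine bookkeeping with the sublattice structure of $\BBB$ and the double commutant theorem.
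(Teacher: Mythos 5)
Your proof is correct, but it takes a genuinely different route from the paper's. The paper does not extract a countable cofinal subfamily by operator-topological means; instead it first shows that the Boolean algebra $\BBB$ satisfies the countable chain condition --- an uncountable pairwise-disjoint family $X\subset\BBB\setminus\{0_\HH\}$ would, by Proposition~\ref{proposition:orthogonal}, produce uncountably many pairwise orthogonal non-zero subspaces $\HH_x\ominus(\mathbb{C}\Omega)$, contradicting separability --- and then invokes Halmos's lemma that in a ccc Boolean algebra every subset admits a countable subset with the same upper bounds; the conclusion follows as in your last step. Your argument buys generality: it never uses the unit $\Omega$ or the factorization structure, only that $\BBB$ is a sublattice of $\hat\HH$ with $\HH$ separable, whereas the paper's argument is shorter given the machinery already in place and isolates the purely Boolean-algebraic (ccc) content. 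One step of yours deserves patching: the SOT-closure of the $*$-algebra $\cup_F y_F$ is a priori a closure under nets, and SOT is not metrizable on unbounded sets, so ``each $d\in D$ is an SOT-limit of a sequence from $\cup_F y_F$'' needs justification; the standard fix is Kaplansky density (the unit ball of $\cup_F y_F$ is SOT-dense in the unit ball of $y$), after which metrizability of SOT on the unit ball of $\BB(\HH)$ for separable $\HH$ yields the desired sequences, and the same metrizability plus separability of the unit ball supplies your countable SOT-dense set $D$. With that supplement your extraction of $\mathfrak{A}_0$ and the concluding exhaustion by finite joins are sound.
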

\begin{proof}
The condition is trivially necessary. Sufficiency. $\BBB$ satisfies the ``countable chain condition'': no uncountable subset $X$ of $\BBB\backslash \{0_\HH\}$ has $x\land y=0_\HH$ for all $x\ne y$ from $X$; for, if such a subset did exist, then by Proposition~\ref{proposition:orthogonal} the $\HH_x\ominus (\mathbb{C}\Omega)$, $x\in X$, would be pairwise orthogonal and non-zero subspaces of $\HH$, which is in contradiction with the separability of $\HH$. By a well-known result \cite[Lemma~14.1]{halmos1963lectures} it follows that every subset $X$ of $\BBB$ admits a countable subset $Y$ which has the same  upper bounds (in the Boolean algebra $\BBB$);  but $\lor Y\in \BBB$ by assumption, so $\lor Y\supset \lor X$, whence (the opposite inclusion being obvious) $\lor X=\lor Y\in \BBB$. 
\end{proof}
%

In this section we prove 
\begin{theorem}\label{thm:main-for-noise-factorization}
The following are equivalent.
\begin{enumerate}[(A)]
\item\label{classical:A} There is a complete unital factorization of $\HH$ which has $(\BBB,\Omega)$ as a subfactorization.
\item\label{classical:B}  For every sequence $(x_n)_{n\in \mathbb{N}}$ in $\BBB$, $\lor_{n\in \mathbb{N}}x_n$ is a factor, i.e. $(\lor_{n\in \mathbb{N}}x_n)\lor (\land_{n\in \mathbb{N}}x_n')=1_\HH$.
\item\label{classical:C} $K<\infty$ a.e.-$\mu$.
\item\label{classical:D} $(\BBB,\Omega)$ is of Fock type.
\item\label{classical:E} The multiplicative vectors are total.
\item\label{classical:F} There exists a spectral independence probability equivalent to $\mu$.
\end{enumerate}
\end{theorem}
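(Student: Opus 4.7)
The plan is to go around the cycle $\ref{classical:D}\Rightarrow\ref{classical:A}\Rightarrow\ref{classical:B}\Rightarrow\ref{classical:C}\Rightarrow\ref{classical:F}\Rightarrow\ref{classical:E}\Rightarrow\ref{classical:D}$. The first two links are the cheapest: \ref{classical:D}$\Rightarrow$\ref{classical:A} is immediate because a Fock unital factorization is itself complete (as noted just after Definition~\ref{definition:noise-factorization}) and so supplies its own complete unital overfactorization; \ref{classical:A}$\Rightarrow$\ref{classical:B} follows because in any complete extension $\overline\BBB\supset\BBB$, the $\hat\HH$-join $\lor_{n\in\mathbb{N}} x_n$ of a sequence in $\BBB$ belongs to $\overline\BBB$ and is therefore a factor.

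For \ref{classical:B}$\Rightarrow$\ref{classical:C} I would argue by contraposition. Assume $\mu(\{K=\infty\})>0$ and pick an exhausting sequence $(b_n)_{n\in\mathbb{N}}$ in $\mathfrak{F}$ witnessing \eqref{xK-limit}. By a diagonal selection across the refining atom systems $\at(b_n)$ I would extract $(x_n)\subset\BBB$ whose spectral sets refine enough on $\{K=\infty\}$ that, using \eqref{projections-and-subspaces} and \eqref{K:nondecreasing}, the subspace $\HH_{\lor_n x_n}+\HH_{\land_n x_n'}$ fails to span $\HH$, the unaccounted-for component having its spectral trace supported on $\{K=\infty\}$. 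Then $(\lor_n x_n)\lor(\land_n x_n')\ne 1_\HH$ and $\lor_n x_n$ is not a factor.

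For \ref{classical:C}$\Rightarrow$\ref{classical:F}, with $K<\infty$ a.e.-$\mu$, I would define $\nu$ by a density of the form $c^K\cdot h$ with respect to $\mu$, where $c\in(0,1)$ and $h>0$ is a $\mu$-integrable normalizer chosen to have local product form (constructible because $\mu$ itself factorizes via \eqref{eq:spectral-resolve-mod0}), then normalize to a probability. The identity $K=\sum_{p\in P}K\circ\pr_p$ from \eqref{K-local} turns $c^K$ into the product $\prod_{p\in P}c^{K\circ\pr_p}$, and combined with the local product forms of $h$ and $\mu$ this delivers independence of $\pr_x$ and $\pr_{x'}$ under $\nu$ for every $x\in\BBB$; positivity of $\nu(\{\emptyset_S\})$ is automatic because $K(\emptyset_S)=0$. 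For \ref{classical:F}$\Rightarrow$\ref{classical:E}, given $\nu=f\cdot\mu$ spectrally independent and equivalent to $\mu$, I would set $\omega_f:=\Psi^{-1}(\sqrt{f}\cdot\mathbf{e})$ for a measurable unit field $\mathbf{e}$; appealing to \eqref{eq:spectral-resolve-spaces} the factorization of $f$ along $(\pr_x,\pr_{x'})$ translates, under the identification \eqref{identification}, into $\omega_f=\phi_x(\omega_f)\otimes\phi_{x'}(\omega_f)$ with $\langle\Omega,\omega_f\rangle=1$, making $\omega_f$ multiplicative (Definition~\ref{definition:multiplicative-vector}). Totality is obtained by varying $f$ through bounded positive multipliers that factor through $(\pr_x,\pr_{x'})$ simultaneously for a generating family of $x\in\BBB$, and by letting $\mathbf{e}$ range over enough measurable unit fields to cover the multiplicities $\GG_s$.

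The closing link \ref{classical:E}$\Rightarrow$\ref{classical:D} is the Araki--Woods-style reconstruction and I expect it to be the main obstacle. The guiding identity is that for multiplicative vectors $\omega,\omega'$ one has
\begin{equation*}
\langle\omega,\omega'\rangle=\langle\phi_x\omega,\phi_x\omega'\rangle\langle\phi_{x'}\omega,\phi_{x'}\omega'\rangle,\quad x\in\BBB,
\end{equation*}
an exact multiplicative analogue of the exponential scalar product \eqref{equation:scalar-product-in-exponential-Fock-space}. Taking formal logarithms of $\omega\mapsto\langle\Omega,\omega\rangle$ near $\Omega$ on the multiplicative set, one extracts an additive ``tangent space'' at $\Omega$ which, via the refining partitions encoded by $\mathfrak{F}$, should decompose as a direct integral over the counting-one portion of the spectrum (cf.\ $\mu^{(1)}$); exponentiating back yields a unitary isomorphism of $\HH$ with a Fock space $\boldsymbol{\Upgamma}_s(\int^\oplus\FF_s\mu^{(1)}(\dd s))$ that intertwines $\BBB$ with a unital subfactorization of the associated Fock unital factorization. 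The delicate points will be the construction of the measurable field $(\FF_s)_{s\in\{K=1\}}$ from the tangent data and verification of the intertwining for all of $\BBB$ (not only for a generating sub-Boolean algebra) --- the same measure-theoretic bookkeeping that made \cite[Sections~5--6]{araki-woods} demanding.
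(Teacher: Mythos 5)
Your architecture (a single cycle \ref{classical:D}$\Rightarrow$\ref{classical:A}$\Rightarrow$\ref{classical:B}$\Rightarrow$\ref{classical:C}$\Rightarrow$\ref{classical:F}$\Rightarrow$\ref{classical:E}$\Rightarrow$\ref{classical:D}) is reasonable and the first two links are fine, but three of the remaining four links have genuine gaps. The most concrete is \ref{classical:B}$\Rightarrow$\ref{classical:C}: your witness for non-factoriality is wrong. That $\HH_{\lor_n x_n}+\HH_{\land_n x_n'}$ fails to span $\HH$ does not imply $(\lor_n x_n)\lor(\land_n x_n')\ne 1_\HH$ --- already for a single factor $x$ one has $x\lor x'=1_\HH$ while $\HH_x+\HH_{x'}$ is a proper subspace of $\HH_x\otimes\HH_{x'}$ whenever both tensor factors exceed dimension one. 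The correct reduction (Lemma~\ref{lemma:for-K-finite}) is a statement about the spectral sets $S_{(x_1\lor\cdots\lor x_m)\lor(x_1\lor\cdots\lor x_n)'}$, and the real difficulty is that no deterministic ``diagonal selection'' of the $x_n$ from the $\at(b_n)$ is known to produce a violation: the paper (following Tsirelson) must choose the $x_n$ \emph{at random}, including each atom of $b_n$ independently with probability $p_n$, and then play Borel--Cantelli and the two counting lemmas (Lemmas~\ref{lemma:one} and~\ref{lemma:three}) against each other to show a random choice works with positive probability on $\{K=\infty\}$. Your sketch omits precisely this mechanism.

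The links \ref{classical:C}$\Rightarrow$\ref{classical:F} and \ref{classical:F}$\Rightarrow$\ref{classical:E} both smuggle in the conclusion. A density $c^K\cdot h$ with $h$ ``of local product form'' requires a factorization of the Radon--Nikodym derivative relating $\mu$ to $\mu_x\times\mu_{x'}$ \emph{coherently for all $x\in\BBB$ at once} (\eqref{eq:spectral-resolve-mod0} only gives equivalence for each $x$ separately), and producing such a coherent system is essentially equivalent to producing a spectral independence probability --- i.e.\ to \ref{classical:F} itself. Likewise $\Psi^{-1}(\sqrt f\cdot\mathbf e)$ is multiplicative only if the unit field $\mathbf e$ factorizes as $\mathbf e_x\otimes\mathbf e_{x'}$ along every decomposition \eqref{eq:spectral-resolve-spaces} simultaneously, which is not automatic and is again the crux. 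The paper circumvents both problems by routing everything through the exponential map on $\HHH(\{K=1\})$: $\Exp(g)$ is multiplicative by construction (Proposition~\ref{prop:Exp}), the family $\{\Exp(g)\}$ is total in $\HHH(\{K<\infty\})$ via the derivative trick $g=\lim_{\epsilon\downarrow0}(\Exp(\epsilon g)-\Omega)/\epsilon$ together with Proposition~\ref{proposition:products-total}, and $\mu_{\Exp(f)/\Vert\Exp(f)\Vert}$ is automatically a spectral independence probability (Proposition~\ref{proposition:multiplicative-and-independence}); the converse direction \ref{classical:F}$\Rightarrow$\ref{classical:C} then needs its own nontrivial probabilistic argument (Proposition~\ref{theorem:new-conditioon-for-classicality}, via stochastic dominance on a random exploration tree), which your cycle never supplies. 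Finally, \ref{classical:E}$\Rightarrow$\ref{classical:D} is left by you as an acknowledged outline; the paper instead proves \ref{classical:C}$\Rightarrow$\ref{classical:D} directly from the unitary $e_{\int^\oplus\GG_s\mu^{(1)}(\dd s)}(g)\mapsto\Exp(\Psi^{-1}(g))$ of Proposition~\ref{proposition:fock-spectrum}, so the Araki--Woods-style reconstruction you anticipate is never needed in that form.
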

We do this by establishing, through a series of results occupying Subsections~\ref{subsection:finiteness}-\ref{subsection:multiplicative-carried}, the implications \ref{classical:A} $\Rightarrow$ \ref{classical:B} (Proposition~\ref{AB}),  \ref{classical:B} $\Rightarrow$ \ref{classical:C} (Proposition~\ref{BC}),   \ref{classical:C} $\Rightarrow$ \ref{classical:E}  (Corollary~\ref{CE}), \ref{classical:C} $\Rightarrow$ \ref{classical:D}  (Corollary~\ref{CD}), \ref{classical:D} $\Rightarrow$ \ref{classical:A} (Proposition~\ref{DA}), \ref{classical:C} $\Rightarrow$ \ref{classical:F} (Proposition~\ref{CF}) and [\ref{classical:E} or \ref{classical:F}] $\Rightarrow$ \ref{classical:C} (Corollary~\ref{EF,C}). 

\begin{definition}\label{definition:classical}
We call $(\BBB,\Omega)$ classical if it meets one and then all of the conditions of Theorem~\ref{thm:main-for-noise-factorization}.
\end{definition}
The notion of blackness, which is a kind of opposite of classicality, is considered briefly in Subsection~\ref{subsection:blackness}. In the same subsection, the reader will find  Extended remark~\ref{extendedremark:noise-Boolean}, which especially one well-versed in the classical probabilistic counterpart of this theory should find a great aid in the understaning of Theorem~\ref{thm:main-for-noise-factorization}.  Indeed, as a guide to intuition, Extended remark~\ref{extendedremark:noise-Boolean}  might even ``preemptively'' be consulted  already at this point, before attempting the proofs.

 \subsection{Finiteness of the counting map}\label{subsection:finiteness}
  Elementary is
 \begin{proposition}\label{AB}
\ref{classical:A} implies \ref{classical:B}.
 \end{proposition}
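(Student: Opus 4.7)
The plan is to use completeness of the enlargement directly. Suppose we are given a complete unital factorization $(\overline{\BBB}, \Omega)$ of $\HH$ with $\BBB \subset \overline{\BBB}$, and let $(x_n)_{n \in \mathbb{N}}$ be a sequence in $\BBB$. Since $\BBB \subset \overline{\BBB}$, each $x_n$ belongs to the complete factorization $\overline{\BBB}$.

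First I would observe that the join $\lor_{n \in \mathbb{N}} x_n$ formed inside $\hat{\HH}$ (namely, the von Neumann algebra generated by $\cup_n x_n$) coincides with the join computed in $\overline{\BBB}$, because $\overline{\BBB}$ is a sublattice of $\hat{\HH}$ closed under arbitrary joins by the assumed completeness. Consequently $\lor_{n \in \mathbb{N}} x_n \in \overline{\BBB}$. By definition a factorization consists of factors, so this element is already a factor.

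To obtain the explicit form $(\lor_{n \in \mathbb{N}} x_n) \lor (\land_{n \in \mathbb{N}} x_n') = 1_\HH$, I would invoke the De Morgan identity $(\lor_{n} x_n)' = \land_n x_n'$ recalled in the remark on involutive properties of $\hat\HH$ at the start of Section~\ref{section:setting}, together with the factor condition $y \lor y' = 1_\HH$ applied to $y := \lor_n x_n \in \overline{\BBB}$. Thus the equation reduces to the defining property of a factor.

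No real obstacle is anticipated here: the implication is essentially bookkeeping, exploiting that completeness of $\overline{\BBB}$ absorbs countable joins and that its members are factors by definition. The proposition is thus an immediate consequence of the framework set up in Section~\ref{section:setting} and Definition~\ref{definition:factorization}.
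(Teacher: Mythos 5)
Your proof is correct and follows essentially the same route as the paper: membership of $\lor_{n\in\mathbb{N}}x_n$ in the complete factorization gives the factor property, and the De Morgan identity $(\lor_{n}x_n)'=\land_n x_n'$ converts this into the stated equation. The paper's proof is just a two-line version of exactly this argument.
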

 \begin{proof}
 $\lor_{n\in \mathbb{N}}x_n$ is  a factor since it belongs to the given complete factorization, which contains $\BBB$. It remains to note that $(\lor_{n\in \mathbb{N}}x_n)'=\land_{n\in \mathbb{N}}x_n'$.
 \end{proof}
 We examine the implications of  \ref{classical:B} on $K$.
\begin{lemma}\label{lemma:for-K-finite}
If \ref{classical:B} holds true, then for every sequence $(x_n)_{n\in \mathbb{N}}$ in $\BBB$, $\lor_{m\in \mathbb{N}}\land_{n\in \mathbb{N}}(x_1\lor\cdots\lor x_m)\lor (x_1\lor\cdots\lor x_n)'=1_\HH$, hence  
\begin{equation*}
\text{ for $\mu$-a.e. $s$: } \exists m\in \mathbb{N}\backepsilon \forall n\in \mathbb{N}:\, s\in S_{(x_1\lor\cdots\lor x_m)\lor (x_1\lor\cdots\lor x_n)'}.
\end{equation*}
\end{lemma}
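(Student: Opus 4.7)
Set $y_m:=x_1\lor\cdots\lor x_m$, so that $(y_m)_{m\in\mathbb{N}}$ is $\uparrow$ in $\BBB$ with $\lor_{m}y_m=\lor_{n}x_n$ and $\land_{n}y_n'=\land_n x_n'$. The plan is to identify an explicit element of $\BBB$ sitting below $\land_{n\in\mathbb{N}}(y_m\lor y_n')$, namely $y_m\lor(\land_n y_n')$, and then to invoke the hypothesis \ref{classical:B} to conclude that the joins of these lower bounds exhaust $1_\HH$; finally, to deduce the spectral $\mu$-a.e. assertion using the monotone behaviour of the $\phi$-projections recalled just after the definition of $\phi_x$.

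The first ingredient is that, for $n\leq m$, $y_n\subset y_m$ and thus $y_n'\supset y_m'$, giving $y_m\lor y_n'\supset y_m\lor y_m'=1_\HH$, because $y_m\in\BBB$ is a factor (here I use that $\BBB$ is a factorization closed under finite joins). So in the meet $\land_{n\in\mathbb{N}}(y_m\lor y_n')$ only the tail $n>m$ matters, and the net $(y_m\lor y_n')_{n}$ is $\downarrow$ in $n$. In any case $u_m:=\land_{n\in\mathbb{N}}(y_m\lor y_n')\supset y_m\lor(\land_{n\in\mathbb{N}} y_n')$ (each factor in the meet lies above $y_m$ and above $\land_n y_n'$). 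The assumption \ref{classical:B} applied to the original sequence gives $(\lor_{n}x_n)\lor(\land_n x_n')=1_\HH$, i.e. $(\lor_m y_m)\lor(\land_n y_n')=1_\HH$. Therefore
\begin{equation*}
\lor_{m\in\mathbb{N}}u_m\;\supset\;\lor_{m\in\mathbb{N}}\bigl(y_m\lor(\land_{n\in\mathbb{N}} y_n')\bigr)\;=\;(\lor_m y_m)\lor(\land_n y_n')\;=\;1_\HH,
\end{equation*}
which is the first displayed identity of the lemma.

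For the spectral consequence I would pass through the family $\Phi$. Since $(u_m)_{m}$ is $\uparrow$ to $1_\HH$, the standing continuity property of $\phi$ on monotone sequences yields $\phi_{u_m}\uparrow\mathbf{1}_\HH$, so $S_{u_m}\uparrow S$ a.e.-$\mu$. For fixed $m$, the sequence $(y_m\lor y_n')_n$ is $\downarrow$ to some element $\supset u_m$, so the corresponding projections decrease to a limit $\geq\phi_{u_m}$, whence $\cap_{n}S_{y_m\lor y_n'}\supset S_{u_m}$ a.e.-$\mu$. Taking the $\uparrow$ union in $m$ of countably many such a.e. inclusions, $\cup_{m}\cap_{n}S_{y_m\lor y_n'}=S$ a.e.-$\mu$, which is precisely the stated $\mu$-a.e. existence of $m$. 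The only place where one must be slightly careful is that all spectral sets here are defined only up to $\mu$-null sets, but since the sequence $(x_n)$ is countable only countably many such null sets intervene, and they can be accumulated into a single $\mu$-negligible exceptional set.

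The main (and only non-routine) obstacle is the identification of a tractable lower bound for $\land_{n}(y_m\lor y_n')$: the lattice $\hat\HH$ is not distributive, so one cannot simply ``pull out'' $y_m$ from the meet. My plan sidesteps this by using only the trivial containment $u_m\supset y_m\lor(\land_n y_n')$, which is enough to close the argument once \ref{classical:B} supplies $(\lor_m y_m)\lor(\land_n y_n')=1_\HH$.
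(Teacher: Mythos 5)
Your proof is correct and follows essentially the same route as the paper's: the paper likewise bounds $\land_{n}(y_m\lor y_n')$ from below by $y_m\lor\overline{x}'$ with $\overline{x}:=\lor_n x_n$ (which is your $y_m\lor(\land_n y_n')$, since $\land_n y_n'=\overline{x}'$), joins over $m$ to invoke \ref{classical:B}, and then transfers the identity to spectral sets via the monotone behaviour of the projections $\phi$ and the fact that $\alpha$ respects bounded monotone suprema and infima. No gaps.
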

\begin{proof}
Put $\overline{x}:=\lor_{n\in \mathbb{N}}x_n$. Then $(x_1\lor\cdots\lor x_n)'\supset \overline{x}'$ for all $n\in \mathbb{N}$. Thus $\lor_{m\in \mathbb{N}}\land_{n\in \mathbb{N}}(x_1\lor\cdots\lor x_m)\lor (x_1\lor\cdots\lor x_n)'\supset \lor_{m\in \mathbb{N}}[(x_1\lor\cdots\lor x_m)\lor \overline{x}']=\overline{x}\lor \overline{x}'=1_\HH$, which means that the inclusion is actually an equality. On the other hand, from $\lor_{m\in \mathbb{N}}\land_{n\in \mathbb{N}}(x_1\lor\cdots\lor x_m)\lor (x_1\lor\cdots\lor x_n)'=1_\HH$ and $\phi_{\lor_{m\in \mathbb{N}}\land_{n\in \mathbb{N}}(x_1\lor\cdots\lor x_m)\lor (x_1\lor\cdots\lor x_n)'}=\lor_{m\in \mathbb{N}}\phi_{\land_{n\in \mathbb{N}}(x_1\lor\cdots\lor x_m)\lor (x_1\lor\cdots\lor x_n)'}\leq \lor_{m\in \mathbb{N}}\land_{n\in \mathbb{N}}\phi_{(x_1\lor\cdots\lor x_m)\lor (x_1\lor\cdots\lor x_n)'}$ we get  $\mathbf{1}_\HH\geq \lor_{m\in \mathbb{N}}\land_{n\in \mathbb{N}}\phi_{(x_1\lor\cdots\lor x_m)\lor (x_1\lor\cdots\lor x_n)'}\geq \phi_{1_\HH}=\mathbf{1}_\HH$ and hence $S= \cup_{m\in \mathbb{N}}\cap_{n\in \mathbb{N}}S_{(x_1\lor\cdots\lor x_m)\lor (x_1\lor\cdots\lor x_n)'}$ a.e.-$\mu$, since the $^*$-isomorphism $\alpha$ respects bounded $\uparrow$ suprema and $\downarrow$ infima.
\end{proof}
With Lemma~\ref{lemma:for-K-finite} in hand the proof of the proposition to follow is now essentially verbatim the same as in the commutative case \cite[pp. 350-352]{tsirelson}. We repeat it here for the reader's convenience. It is more succinct relative to the original because the spectral projections (which were only introduced later in \cite{vidmar-noise}) allow for a greater degree of economy of expression. 
\begin{proposition}\label{BC}
\ref{classical:B} implies \ref{classical:C}.
\end{proposition}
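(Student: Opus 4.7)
The plan is to argue by contradiction following the blueprint of Tsirelson's commutative-case argument \cite[pp.~350--352]{tsirelson}, now streamlined by the spectral-projection language. Suppose $\mu(E) > 0$ where $E := \{K = \infty\}$, and fix an exhausting sequence $(\beta_n)$ with $K_{\beta_n} \uparrow K$ $\mu$-a.e., per \eqref{xK-limit}. The heart of the argument is to construct a sequence $(x_n)_{n \in \mathbb{N}}$ in $\BBB$ of \emph{pairwise orthogonal} elements, meaning $x_i \land x_j = 0_\HH$ for $i \neq j$, equivalently $x_{n+1} \subset r_n := x_1' \land \cdots \land x_n'$. Under this constraint, the finite Boolean subalgebra $B_n$ of $\BBB$ generated by $\{x_1, \ldots, x_n\}$ has exactly the atoms $\{x_1, \ldots, x_n, r_n\}$ (with $r_n$ dropped if zero), and critically $v_n' = r_n$, where $v_n := x_1 \lor \cdots \lor x_n$.

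The sequence is built inductively together with a descending chain $E \supset E_1 \supset E_2 \supset \cdots$ of positive-$\mu$-measure subsets whose intersection $E_\infty := \cap_n E_n$ still has $\mu(E_\infty) > 0$. At step $n+1$, one selects $x_{n+1} \subset r_n$ non-trivially so that on a positive-measure subset $E_{n+1} \subset E_n$, neither $s \in S_{x_{n+1}}$ nor $s \in S_{x_{n+1}'}$ holds. Feasibility of such a choice exploits the additive decomposition \eqref{K-local}: on $E_n \subset \{K = \infty\}$, $K = \sum_{k \leq n} K(\pr_{x_k}) + K(\pr_{r_n})$ is infinite, forcing sufficient nontrivial spectral content either on the residual side for a fresh orthogonal split or, if the infinity has migrated into a previously chosen $x_k$, allowing a partial reorganization by bookkeeping via \eqref{K-local}. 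A measurable selection, using a countable subfamily of $\BBB$ whose spectral sets generate $\Sigma$ up to $\mu$-null sets (available by standardness of $\mu$ and separability of $\HH$), permits a single $x_{n+1}$ to work on a positive sub-measure subset, with $\mu(E_n \setminus E_{n+1})$ controlled so that $\sum_n \mu(E_n \setminus E_{n+1}) < \infty$ and $\mu(E_\infty) > 0$. On $E_\infty$, the conditions $s \notin S_{x_k}$ and $s \notin S_{x_k'}$ for all $k$ force $\underline{B_n}(s) \supset v_n$, giving $K_{B_n}(s) \geq n \to \infty$.

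Now apply Lemma~\ref{lemma:for-K-finite} to $(x_n)$: for $\mu$-a.e. $s$ there is $m(s) \in \mathbb{N}$ with $s \in S_{u_m \lor v_n'}$ for all $n$, where $u_m := x_1 \lor \cdots \lor x_m$. By countable additivity, choose $M \in \mathbb{N}$ and $E'' \subset E_\infty$ with $\mu(E'') > 0$ and $m(s) \leq M$ throughout $E''$; by monotonicity in $m$, $s \in S_{u_M \lor v_n'} = S_{u_M \lor r_n}$ for all $s \in E''$ and all $n$. By the pairwise-orthogonal structure, the atoms of $B_n$ below $u_M \lor r_n$ are exactly $\{x_1, \ldots, x_M, r_n\}$, totaling $M+1$. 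Via \eqref{eq:ad-def-K}, $\underline{B_n}(s) \subset u_M \lor r_n$ on $E''$ for $n \geq M$, forcing $K_{B_n}(s) \leq M + 1$ there---a contradiction with $K_{B_n}(s) \to \infty$ on $E_\infty \supset E''$ established above.

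The main obstacle is the inductive construction in the second paragraph: securing at every stage an orthogonal splitting $x_{n+1} \subset r_n$ that strictly produces growth of $K_{B_n}$, while preserving $\mu(E_\infty) > 0$. The subtle scenario is when the infinite $K$-mass on $E$ concentrates into the already-selected $x_k$'s rather than into the current residual $r_n$; here \eqref{K-local} and the measurable-selection machinery do the real work of \textit{shepherding} the infinity into a fresh orthogonal direction. Once the construction is in hand, the clean atom count in the final step is the genuine payoff of insisting on pairwise-orthogonal sequences.
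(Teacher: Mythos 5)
Your overall architecture (build a ``bad'' pairwise disjoint sequence on a positive-measure part of $\{K=\infty\}$, then contradict it with Lemma~\ref{lemma:for-K-finite}) is a reasonable plan, and your final atom-counting step is correct: if $s\notin S_{x_k'}$ for $k\le n$ then $\underline{B_n}(s)\supset v_n$ so $K_{B_n}(s)\ge n$, while $s\in S_{u_M\lor r_n}$ caps $K_{B_n}(s)$ at $M+1$. But the inductive construction on which everything rests has a genuine gap, and it is exactly the gap that forces the probabilistic argument used in the paper. Two distinct problems. First, to even have a candidate $x_{n+1}\le r_n$ with spectral content at $s$, you need $K(\pr_{r_n}(s))\ge 1$ (and to keep going forever, $=\infty$); the identity \eqref{K-local} gives $K=\sum_{k\le n}K(\pr_{x_k})+K(\pr_{r_n})$ but in no way prevents the infinity from having been absorbed into an already chosen $x_k$, and your proposed ``partial reorganization'' would mean revising earlier choices, which destroys the induction. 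Second, even where a suitable candidate exists for each individual $s$ (which does follow, by countable additivity over a countable generating family, for \emph{some} positive-measure $E_{n+1}\subset E_n$), nothing forces the single uniform choice to capture more than an uncontrolled fraction of $E_n$; the assertion that one can arrange $\sum_n\mu(E_n\setminus E_{n+1})<\infty$ so that $\mu(E_\infty)>0$ is exactly the hard point and is stated, not proved. As written, $\mu(E_n)\downarrow 0$ is entirely possible, and then there is no contradiction.

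The paper's proof sidesteps both issues by randomizing the sequence rather than constructing it: each atom of $b_n$ is included in $X_n$ independently with probability $p_n$, where $\sum_n p_n<1$ and $(1-p_n)^{c_n}\to 0$ for a Borel--Cantelli-calibrated sequence $(c_n)$. The quantifiers are then reversed relative to your scheme: for $\mu$-a.e.\ \emph{fixed} $s\in\{K=\infty\}$ one shows that with positive $\PP$-probability $K(\pr_{Y_m'}(s))=\infty$ for \emph{all} $m$ (Lemma~\ref{lemma:one}, using $\sum p_n<1$), while $\PP$-a.s.\ no $m$ can simultaneously have $K(\pr_{Y_m'}(s))=\infty$ and $s\in S_{Y_m\lor Y_n'}$ for all $n$ (Lemma~\ref{lemma:three}, using the growth of $K_{b_n}$ on $\{K=\infty\}$ from \eqref{xK-limit} and Borel--Cantelli). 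Assumption~\ref{classical:B}, via Lemma~\ref{lemma:for-K-finite} applied to each realization, then forces $\PP(K(\pr_{Y_m'}(s))<\infty\text{ for some }m)=1$, contradicting the first estimate unless $\mu(K=\infty)=0$. Because everything is quantified pointwise in $s$ with the randomness carrying the selection, no positive-measure set has to survive a deterministic induction. To repair your proof you would essentially have to reinvent this randomization (or find a genuinely new selection argument); the deterministic construction as described does not go through.
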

\begin{proof}
For the moment we do not assume \ref{classical:B}. Let $(b_n)_{n\in \mathbb{N}}$ be an exhausting sequence in $\mathfrak{F}$ (recall \eqref{xK-limit}).


Next we choose sequences $(p_n)_{n\in \mathbb{N}}$ in $(0,1)$ and $(c_n)_{n\in \mathbb{N}}$ in $\mathbb{N}$ such that 
\begin{equation}\label{conditions:c-p}
\sum_{n\in \mathbb{N}}p_n<1
\text{ and } \lim_{n\to\infty}(1-p_n)^{c_n}=0
\end{equation}
 (they exist, any will do).  Passing to an equivalent finite measure we may and do assume $\mu$ is finite. Due to \eqref{xK-limit}, by continuity of $\mu$ from above, there is a sequence $(\mathsf{n}_n)_{n\in \mathbb{N}}$ in $\mathbb{N}$ that is $\uparrow\uparrow\infty$ and such that 
\begin{equation*}
\sum_{n\in \mathbb{N}}\mu(K_{b_{\mathsf{n}_n}}<c_n,K=\infty)<\infty; 
\end{equation*}
by Borel-Cantelli we deduce that 
\begin{equation}\label{eq:funny-condition}
\text{[either $K<\infty$ or else ($K_{b_{\mathsf{n}_n}}\geq c_n$ for all $n\in \mathbb{N}$ large enough)] a.e.-$\mu$}.
\end{equation}
For ease of notation, passing to a subsequence of $(b_n)_{n\in \mathbb{N}}$ if necessary, we may and do assume (vis-\`a-vis the validity of \eqref{conditions:c-p}-\eqref{eq:funny-condition})  that 
\begin{equation}\label{eq:wlog-for-n}
\text{$\mathsf{n}_n=n$ for all $n\in \mathbb{N}$.}
\end{equation} 

Introduce now the product probability $\PP:=\times_{n\in \mathbb{N}}\PP_n$ on the  measurable space $(\prod_{n\in \mathbb{N}}b_n,\otimes_{n\in \mathbb{N}}2^{b_n})$, where, for each $n\in \mathbb{N}$, $\PP_n$ is the unique probability on $(b_n,2^{b_n})$ such that 
\begin{equation*}
\PP_n(\{x\})=p_n^{\vert \at(b_n)\cap 2^x\vert}(1-p_n)^{\vert \at(b_n)\vert-\vert \at(b_n)\cap 2^x\vert},\quad x\in b_n;
\end{equation*} in words, $\PP_n$ includes each atom of $b_n$ with probability $p_n$ independently of the others.  We denote by $X=(X_n)_{n\in \mathbb{N}}$ the coordinate process on $\prod_{n\in \mathbb{N}}b_n$. The probability $\PP$ may then equivalently be described in the following manner: the $X_n$, $n\in \mathbb{N}$, are independent under $\PP$ and ${X_n}_\star\PP=\PP_n$ for all $n\in \mathbb{N}$. Finally we define the ``running joins''
\begin{equation*}
Y_n:=X_1\lor \cdots \lor X_n,\quad n\in \mathbb{N},
\end{equation*}
 and record two ancillary claims (which are generally valid).
 
 \begin{lemma}\label{lemma:one}
For $\mu$-a.e. $s\in \{K=\infty\}$, $$\PP(K(\pr_{Y_m'}(s))<\infty\text{ for some $m\in \mathbb{N}$})< 1.$$
\end{lemma}
\begin{proof}
 For $\mu$-a.e. $s\in \{K=\infty\}$ we have as follows. For all $m\in \mathbb{N}$, by \eqref{K-local} applied with $P=\at(b_m)$, there is $a\in \at(b_m)$ such that $K(\pr_a(s))=\infty$. By \eqref{projections-composition}-\eqref{K-local}, $K(\pr_{Y'_m}(s))=\sum_{x\in \at(b_m)\cap 2^{Y'_m}}K(\pr_x(s))$ and we get
\begin{equation*}
\{K(\pr_{Y_m'}(s))<\infty\}\subset  \{a\nsubset Y_m'\} =\{a\subset Y_m\}\subset \cup_{n\in [m]}\{a\subset X_n\}.
\end{equation*}
Applying monotonicity and finite subaddivity of $\PP$ we get $\PP(K(\pr_{Y_m'}(s))<\infty)\leq \sum_{n=1}^mp_n$. Passing to the limit $m\to\infty$ using \eqref{K:nondecreasing} and the continuity of $\PP$ from below the claim now follows from the first condition of  \eqref{conditions:c-p}.
\end{proof}


\begin{lemma}\label{lemma:three}
For $\mu$-a.e. $s$,
\begin{equation*}
\PP\left(\exists m\in \mathbb{N}\left[ K(\pr_{Y_m'}(s))=\infty\text{ and } s\in S_{Y_m\lor Y_n'}\text{ for all $n\in \mathbb{N}$}\right]\right)=0.
\end{equation*}
\end{lemma}
\begin{proof}
The argument is again for $\mu$-a.e. $s$. Fix $m\in \mathbb{N}$. Recall from \eqref{eq:ad-def-K} that for all $n\in \mathbb{N}_{\geq m}$, $s\in S_{Y_m\lor Y_n'}$ iff $Y_m\lor Y_n'\supset \underline{b_n}(s)$. Separating the event $\{K(\pr_{Y_m'}(s))=\infty\text{ and }s\in S_{Y_m\lor Y_n'}\text{ for all }n\in \mathbb{N}_{\geq m}\}$ according to the possible (finitely many) values that $Y_m'$ can take in $b_m$ it suffices to prove (by subadditivity of $\PP$) that for all $x\in b_m$, if $s\in \{K(\pr_x)=\infty\}$, then 
\begin{equation*}
\PP(x'\lor Y_n'\supset \underline{b_n}(s)\text{ for all }n\in \mathbb{N}_{\geq m})=0.
\end{equation*}
But for $n\in \mathbb{N}_{\geq m}$, $\{x'\lor Y_n'\supset \underline{b_n}(s)\}=\{(Y_n\land x)'\supset \underline{b_n}(s)\}=\{Y_n\land x\land \underline{b_n}(s)=0_\HH\}\subset\{X_n\land x\land \underline{b_n}(s)=0_\HH\}=\{X_n\subset (x\land \underline{b_n}(s))'\}$. Furthermore, 
$x\land \underline{b_n}(s)$ is the join of precisely $K_ {b_n}(\pr_x(s))$ atoms of $b_n$. Therefore $\PP(X_n\subset (x\land \underline{b_n}(s))')=(1-p_n)^{K_{b_n}(\pr_x(s))}$, which, if $s\in \{K(\pr_x)=\infty\}$, is $\to 0$ as $n\to\infty$ by \eqref{eq:funny-condition}-\eqref{eq:wlog-for-n} and the second condition of \eqref{conditions:c-p}.
 \end{proof}

Now, assuming \ref{classical:B}, Lemmas~\ref{lemma:for-K-finite} and~\ref{lemma:three} tell us that 
\begin{equation*}
\PP(K(\pr_{Y_m'}(s))<\infty\text{ for some $m\in \mathbb{N}$})=1
\end{equation*}
for $\mu$-a.e. $s$. Combined with Lemma~\ref{lemma:one} it renders $\mu(K=\infty)=0$, which is \ref{classical:C}.
\end{proof}

\subsection{Fock form} Let us investigate  the ``classical part'' $\HHH(\{K<\infty\})$. Its structure will be revealed fully in Proposition~\ref{proposition:fock-spectrum} via the exponential map of Proposition~\ref{prop:Exp}. But first we exhaust it through suitable tensor products in
\begin{proposition}\label{proposition:products-total}
The collection
\begin{equation*}
\left\{\otimes_{p\in P}X_p:X\in \prod_{p\in P}\HHH(\{K\leq 1\})\cap \HH_p\text{, $P$ a partition of unity of $\BBB$}\right\}
\end{equation*}
is total in $\HHH(\{K<\infty\})$.
\end{proposition}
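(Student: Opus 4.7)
My approach is to exhaust $\HHH(\{K<\infty\})$ from below by the subspaces $\HHH(\{K\leq n\})$ ($n\in \mathbb{N}_0$), which is legitimate since $\{K\leq n\}\uparrow \{K<\infty\}$ implies $[\HHH(\{K\leq n\})]\uparrow [\HHH(\{K<\infty\})]$ strongly. It therefore suffices to fit each $\HHH(\{K\leq n\})$ into the closed linear span of the prescribed tensor products. Fix $n$ and choose an exhausting sequence $(b_m)_{m\in \mathbb{N}}$ in $\mathfrak{F}$ as in \eqref{xK-limit}. Setting $P_m:=\at(b_m)$ and $A_m:=\{K_{b_m}=K\}$, the fact that each $K_{b_m}$ is $\mathbb{N}_0$-valued and $\uparrow K$ a.e.-$\mu$ makes $A_m\uparrow \{K<\infty\}$ a.e.-$\mu$, so $\HHH(A_m\cap \{K\leq n\})\uparrow \HHH(\{K\leq n\})$, and it is enough to handle the former for each fixed $m$.

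The core combinatorial observation is that on $A_m$ the spectral projections $\pr_a$, $a\in P_m$, are severely constrained. By \eqref{K-local} applied to the partition $P_m$, $K=\sum_{a\in P_m}K(\pr_a)$ and $K_{b_m}=\sum_{a\in P_m}K_{b_m}(\pr_a)$ a.e.-$\mu$; moreover, directly from \eqref{eq:Kb} and the identity $\pr_a^{-1}(\{\emptyset_S\})=S_{a'}$ one gets $K_{b_m}(\pr_a)=\mathbbm{1}_{S\setminus S_{a'}}\in \{0,1\}$, which is $1$ iff $\pr_a\ne \emptyset_S$. On $A_m$, where $K=K_{b_m}$, the identity $K(\pr_a)\geq K_{b_m}(\pr_a)$ (a consequence of $K\geq K_{b_m}$ read along the partition) forces each $K(\pr_a)$ itself to lie in $\{0,1\}$, taking the value $0$ precisely when $\pr_a=\emptyset_S$ and $1$ otherwise. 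For $A\subset P_m$ with $|A|\leq n$, define
\begin{equation*}
F_{m,A}:=\bigcap_{a\in A}\pr_a^{-1}(\{K=1\})\cap \bigcap_{a\in P_m\setminus A}\pr_a^{-1}(\{\emptyset_S\}).
\end{equation*}
By the above, $A_m\cap \{K\leq n\}$ is the disjoint union (a.e.-$\mu$) of the $F_{m,A}$, ranging over such $A$.

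Applying \eqref{projections-and-subspaces} to $F_{m,A}$ with the partition $P_m$ yields
\begin{equation*}
\HHH(F_{m,A})=\bigotimes_{a\in A}\bigl[\HHH(\{K=1\})\cap \HH_a\bigr]\otimes \bigotimes_{a\in P_m\setminus A}\mathbb{C}\Omega,
\end{equation*}
where I used $\HHH(\{\emptyset_S\})\cap \HH_a=\mathbb{C}\Omega$ ($\Omega\in \HH_a$). Since both $\HHH(\{K=1\})\cap \HH_a$ and $\mathbb{C}\Omega=\HHH(\{\emptyset_S\})\cap \HH_a$ sit inside $\HHH(\{K\leq 1\})\cap \HH_a$, each simple tensor in $\HHH(F_{m,A})$ has the form $\otimes_{p\in P_m}X_p$ with $X_p\in \HHH(\{K\leq 1\})\cap \HH_p$ (take $X_p=\Omega$ outside $A$). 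Summing over $A$ puts $\HHH(A_m\cap \{K\leq n\})$ in the requisite closed span, and passing to the monotone limits in $m$ and then in $n$ finishes the proof.

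\textbf{Main obstacle.} The delicate point is the dichotomy on $A_m$: I need that the \emph{full} counting map $K$, not merely the finite approximant $K_{b_m}$, takes values in $\{0,1\}$ under each $\pr_a$ for $a\in P_m$. That relies on combining \eqref{K-local} with the inequality $K\geq K_{b_m}$ in a way that is compatible with the additive decomposition over $P_m$, and on the identification $\pr_a^{-1}(\{\emptyset_S\})=S_{a'}$. Once this is in hand, recasting $F_{m,A}$ as an intersection of $\pr_p^{-1}(E_p)$, $p\in P_m$, is routine, and \eqref{projections-and-subspaces} delivers the tensor-product form directly; the remaining monotone limits are bookkeeping.
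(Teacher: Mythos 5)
Your proof is correct and follows essentially the same route as the paper's: both decompose $\{K<\infty\}$ (a.e.-$\mu$) into the sets $\bigl(\cap_{a\in Q}\pr_a^{-1}(\{K=1\})\bigr)\cap\bigl(\cap_{a\in \at(b_n)\backslash Q}\pr_a^{-1}(\{K=0\})\bigr)$ using an exhausting sequence, the termwise comparison of $K=\sum_a K(\pr_a)$ with $K_{b_n}=\sum_a K_{b_n}(\pr_a)$ on $\{K=K_{b_n}\}$, and then \eqref{projections-and-subspaces}. Your intermediate exhaustion via $\{K\leq n\}$ and the sets $A_m$ is just a more explicit bookkeeping of the same union.
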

\begin{proof}
The family is included in $\HHH(\{K<\infty\})$ due to \eqref{K-local}-\eqref{projections-and-subspaces}. Let $(b_n)_{n\in \mathbb{N}}$ be an exhausting sequence in $\mathfrak{F}$. By  \eqref{projections-composition}-\eqref{K-local}, for $n\in \mathbb{N}$ and $m\in \mathbb{N}_0$, $\{K=K_{b_n}=m\}=\cup_{Q\in { \at(b_n)\choose m}}\left[\left(\cap_{a\in Q}\pr_a^{-1}(\{K=1\})\right)\cap\left(\cap_{a\in \at(b_n)\backslash Q}\pr_a^{-1}(\{K=0\})\right)\right]$ a.e.-$\mu$. The evident a.e.-$\mu$ equality $\{K<\infty\}=\cup_{m\in \mathbb{N}_0} \cup_{n\in \mathbb{N}}\{K=K_{b_n}=m\}$ thus becomes 
\begin{equation*}
\{K<\infty\}=\cup_{n\in \mathbb{N}}\cup_{m\in \mathbb{N}_0}\cup_{Q\in { \at(b_n)\choose m}}\left[\left(\cap_{a\in Q}\pr_a^{-1}(\{K=1\})\right)\cap\left(\cap_{a\in \at(b_n)\backslash Q}\pr_a^{-1}(\{K=0\})\right)\right] \text{ a.e.-$\mu$,}
\end{equation*} which in view of  \eqref{projections-and-subspaces}  yields the stipulated totality.
\end{proof}

The following technical measure-theoretic result shall be employed below.
\begin{lemma}\label{lemma:limit-involving-complex-measure}
Let $\nu$ be a complex measure on a countably separated measurable space $(X,M)$. Let also $\mathcal{P}=(\mathcal{P}_n)_{n\in \mathbb{N}}$ be a sequence of finite $M$-measurable partitions of $X$, whose union is separating the points of $X$ and such that $\mathcal{P}_{n+1}$ is finer than $\mathcal{P}_n$ for all $n\in \mathbb{N}$ (such sequences exist precisely because $(X,M)$ is countably separated). In other words, $\mathcal{P}$ is a dissecting system in the sense of  \cite[Definition~A1.6.I]{vere-jones}. Then 
\begin{equation}\label{eq:a.measure-limit}
\lim_{n\to\infty}\prod_{P\in \mathcal{P}_n}(1+\nu(P))=e^{\nu(X\backslash \at(\vert\nu\vert))}\prod_{x\in \at(\vert\nu\vert)}(1+\nu(\{x\})).
\end{equation}
\end{lemma}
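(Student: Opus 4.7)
The plan is to isolate the contribution of the atoms of $|\nu|$ and estimate the remainder via a Taylor expansion of the principal branch of $\log(1+z)$. Since $|\nu|$ is finite, $\at(|\nu|)$ is countable with $\sum_x|\nu(\{x\})|\leq|\nu|(X)<\infty$, so $\prod_{x\in\at(|\nu|)}(1+\nu(\{x\}))$ converges absolutely. I will write $|\nu|=|\nu|_c+|\nu|_{\at}$ for the atomless--purely atomic decomposition of the total variation.

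The central technical fact I will establish first is
\begin{equation*}
\lim_{n\to\infty}\sup_{P\in\mathcal{P}_n}|\nu|_c(P)=0.
\end{equation*}
Suppose this fails, witnessed by some $\epsilon>0$. Then the sets $T_n:=\{P\in\mathcal{P}_n:|\nu|_c(P)\geq\epsilon\}$ are all non-empty, and because $\mathcal{P}_{n+1}$ refines $\mathcal{P}_n$, each $P\in T_{n+1}$ lies inside a unique parent in $T_n$; this gives a finitely branching inverse system of non-empty finite sets. K\"onig's lemma then produces a decreasing chain $P_1\supseteq P_2\supseteq\cdots$ with $P_n\in T_n$. By the dissecting property, $\bigcap_n P_n$ has at most one point; continuity of the finite measure $|\nu|_c$ from above forces $|\nu|_c(\bigcap_n P_n)\geq\epsilon$. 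This contradicts atomlessness of $|\nu|_c$.

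Given $\eta>0$, I fix a finite $A=\{x_1,\ldots,x_N\}\subseteq\at(|\nu|)$ with $|\nu|(\at(|\nu|)\setminus A)<\eta$, and take $n$ large enough that $\mathcal{P}_n$ separates the $x_k$'s, so each $x_k$ sits in a distinct piece $P_{n,k}\in\mathcal{P}_n$. Writing the remaining pieces as $Q_{n,1},\ldots,Q_{n,m_n}$, I split
\begin{equation*}
\prod_{P\in\mathcal{P}_n}(1+\nu(P))=\prod_{k=1}^N(1+\nu(P_{n,k}))\cdot\prod_j(1+\nu(Q_{n,j})).
\end{equation*}
Since $P_{n,k}\downarrow\{x_k\}$, continuity of $\nu$ gives $\prod_{k=1}^N(1+\nu(P_{n,k}))\to\prod_{k=1}^N(1+\nu(\{x_k\}))$. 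For the other factor, the uniform bound $|\nu(Q_{n,j})|\leq|\nu|_c(Q_{n,j})+|\nu|(Q_{n,j}\setminus A)\leq\sup_{P\in\mathcal{P}_n}|\nu|_c(P)+\eta$ is at most $2\eta$ for $n$ large by the key lemma. Using $|\log(1+z)-z|\leq|z|^2$ for $|z|\leq 1/2$, I get
\begin{equation*}
\log\prod_j(1+\nu(Q_{n,j}))=\nu\Bigl(\bigcup_j Q_{n,j}\Bigr)+E_n,\quad|E_n|\leq\sum_j|\nu(Q_{n,j})|^2\leq 2\eta\,|\nu|(X),
\end{equation*}
and $\nu(\bigcup_j Q_{n,j})=\nu(X)-\sum_k\nu(P_{n,k})\to\nu(X\setminus A)$. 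Combining, $\limsup_n\bigl|\prod_{P\in\mathcal{P}_n}(1+\nu(P))-e^{\nu(X\setminus A)}\prod_{k=1}^N(1+\nu(\{x_k\}))\bigr|\leq C\eta$ for a constant $C$ depending only on $|\nu|(X)$.

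Letting $\eta\to 0$ (so $A\uparrow\at(|\nu|)$), absolute convergence of the atom product gives $\prod_{k=1}^N(1+\nu(\{x_k\}))\to\prod_{x\in\at(|\nu|)}(1+\nu(\{x\}))$, while $\nu(X\setminus A)\to\nu(X\setminus\at(|\nu|))$ by continuity of $\nu$, delivering \eqref{eq:a.measure-limit}. I expect the main obstacle to be the uniform-smallness lemma $\sup_{P\in\mathcal{P}_n}|\nu|_c(P)\to 0$: it is the only step where the dissecting (as opposed to merely generating) property of $\mathcal{P}$ enters in an essential way, and it crucially couples atomlessness of $|\nu|_c$ to the combinatorics of the refining system through K\"onig's lemma.
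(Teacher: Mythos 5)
Your proof is correct, and while it rests on the same three pillars as the paper's argument --- a compactness argument on the refining system to get uniform smallness, continuity from above on the shrinking cells around atoms, and the second-order estimate for $\log(1+z)$ --- the way you organize it is genuinely different. The paper's key claim is $\max_{P\in \mathcal{P}_n}\vert\nu\vert(P\backslash\{a_P\})\to 0$, where $a_P$ is a point of $P$ of maximal $\vert\nu\vert$-mass; this lets it peel off \emph{all} atoms at once via the auxiliary product $\prod_{q}(1+\nu(\{q_P\}))$ and then reduce, through repeated use of an elementary product inequality, to the purely atomless case, where the logarithm estimate finishes the job. You instead prove the weaker (and sufficient) statement $\sup_{P\in\mathcal{P}_n}\vert\nu\vert_c(P)\to 0$, isolate only a \emph{finite} set of atoms carrying all but $\eta$ of the atomic mass, and absorb the small atoms together with the continuous part into a single uniformly small remainder handled by one application of the log estimate; the price is a $\limsup$-within-$\eta$ bookkeeping step at the end. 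Your compactness step is also phrased more cleanly: the tree of ``bad'' cells plus K\"onig's lemma replaces the paper's ad hoc extraction (via maximality/Zorn) of a nonincreasing subsequence, and you avoid the paper's extra care around factors $1+\nu(\{x\})=0$ (its reduction divides by $1+\nu(\{a_P\})$ and must first dispose of that degenerate case), since you never divide by anything. Overall your route is somewhat more economical; the paper's yields the slightly stronger uniform statement about $\vert\nu\vert(P\backslash\{a_P\})$ as a by-product.
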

\begin{proof}
For $n\in \mathbb{N}_0$ and $x\in \mathbb{C}^{[n]}$ we have 
\begin{equation}\label{elementary}
\left\vert \prod_{i\in [n]}(1+x_i)-1\right\vert\leq \prod_{i\in [n]}(1+\vert x_i\vert)-1\leq e^{\sum_{i\in [n]}\vert x_i\vert}-1\leq \left(\sum_{i\in [n]}\vert x_i\vert\right) e^{\sum_{i\in [n]}\vert x_i\vert}.
\end{equation} 
Notice also that for all $x\in  X$, if we denote, for $n\in \mathbb{N}$, by $P_{n,x}$ the unique element of $\mathcal{P}_n$ that contains $x$, then 
\begin{equation}\label{descending-to-singleton}
\text{$P_{n,x}\downarrow \{x\}$ as $n\to\infty$}
\end{equation}
(because $\cup_{n\in \mathbb{N}}\mathcal{P}_n$ is separating $X$).

For any non-empty $P\subset X$ denote next by $a_P$ an arbitrary element $p\in P$ which maximizes $\vert\nu\vert(\{p\})$ (so, $\nu(\{a_P\})=\max\{\vert \nu\vert(\{p\}):p\in P\}$, the maximum being attained because $\vert \nu\vert$ is finite). We claim that 
\begin{equation}\label{eq:uniform-conv}
\text{$\downarrow$-$\lim_{n\to\infty}$}\max_{P\in \mathcal{P}_n}\vert \nu\vert(P\backslash \{a_P\})=0.
\end{equation}
The argument  is a straightforward extension of \cite[proof of Lemma~A1.6.II]{vere-jones}, which handles the nonatomic case. As it is very short we may as well include it here. Indeed, if \eqref{eq:uniform-conv} were not the case, then for some $\epsilon>0$ we would find a sequence $(P_n)_{n\in \mathbb{N}}$ with $P_n\in \mathcal{P}_n$ and $\vert \nu\vert(P_n\backslash \{a_{P_n}\})\geq \epsilon$ for all $n\in \mathbb{N}$  (call this property $(\dagger)$).  By Zorn (or just by direct construction) there is  a subset  $A$ of $\mathbb{N}$ maximal w.r.t. inclusion having the property that the $P_n$, $n\in A$, are pairwise disjoint.  If $A$ is infinite it contradicts finiteness of $\vert\nu\vert$. Otherwise there is $a\in A$ such that $P_n\subset P_a$ for infinitely many $n\in \mathbb{N}_{>a}$. Proceeding inductively we deduce existence of a nonincreasing (w.r.t. inclusion) subsequence $(P_{\mathsf{n}_k})_{k\in \mathbb{N}}$ of $(P_n)_{n\in \mathbb{N}}$. Now, 
by the separating property, $\cap_{k\in \mathbb{N}}P_{\mathsf{n}_k}$ contains at most one point $x$, and if this point is further an atom of $\vert\nu\vert$, there being only finitely many other atoms of size no smaller than $\vert\nu\vert(\{x\})$, $a_{P_{\mathsf{n}_k}}$ must be equal to $x$ for all but finitely many $k\in \mathbb{N}$ (again by the separating property). Hence, whether or not $\cap_{k\in \mathbb{N}}P_{\mathsf{n}_k}$ is empty, $\lim_{k\to\infty}\vert\nu\vert(P_{\mathsf{n}_k}\backslash \{a_{P_{\mathsf{n}_k}}\})=0$ contradicting $(\dagger)$. 

By bounded convergence \eqref{eq:uniform-conv} entails 
\begin{equation}\label{eq:weird-with-indicators}
\lim_{n\to\infty}\vert \nu\vert\left[\sum_{P\in \mathcal{P}_n}\vert \nu\vert(P\backslash \{a_P\})\mathbbm{1}_{P}\right]=0.
\end{equation}

Write now $Q_n:=\{a_P:P\in \mathcal{P}_n\}$ for $n\in \mathbb{N}$. Using the elementary estimate \eqref{elementary} we reduce checking \eqref{eq:a.measure-limit} to the case when $\nu(\{x\})\ne -1$ for all $x\in \at(\vert\nu\vert)$, note that as a consequence of this and of the finiteness of $\vert \nu\vert$,
\begin{equation}\label{eq:uniform-atoms}
\inf_{x\in \at(\vert\nu\vert)}\vert 1+\nu(\{x\})\vert>0,
\end{equation}
and  resolve to evaluating
\begin{align*}
\lim_{n\to\infty}\prod_{P\in \mathcal{P}_n}(1+\nu(P))&=\lim_{n\to\infty}\left(\prod_{q\in Q_n}(1+\nu(\{q\}))\right)\prod_{P\in \mathcal{P}_n}\frac{1+\nu(P)}{1+\nu(\{a_P\})}\\
&=\left(\prod_{x\in \at(\vert\nu\vert)}(1+\nu(\{x\}))\right)\lim_{n\to\infty}\prod_{P\in \mathcal{P}_n}\frac{1+\nu(P)}{1+\nu(\{a_P\})}\quad (\because \text{ \eqref{elementary}})\\
&=\left(\prod_{x\in \at(\vert\nu\vert)}(1+\nu(\{x\}))\right)\lim_{n\to\infty}\prod_{P\in \mathcal{P}_n}(1+\nu(P\backslash \{a_P\})),
\end{align*}
taking into account in the last equality that,  again in virtue of \eqref{elementary},
\begin{equation*}
\lim_{n\to\infty}\prod_{P\in \mathcal{P}_n}\frac{1+\nu(P)}{(1+\nu(\{a_P\}))(1+\nu(P\backslash \{a_P\}))}=1,
\end{equation*}
since
\begin{align*}
\sum_{P\in  \mathcal{P}_n}\left\vert\frac{1+\nu(P)}{(1+\nu(\{a_P\}))(1+\nu(P\backslash \{a_P\}))}-1\right\vert&\leq \sum_{P\in  \mathcal{P}_n}\frac{\vert \nu\vert(\{a_P\})\vert\nu\vert(P\backslash \{a_P\})}{\vert( 1+\nu(\{a_P\}))(1+\nu(P\backslash \{a_P\}))\vert}\\
&\leq \frac{\vert \nu\vert(X)\max_{P\in \mathcal{P}_n}\vert \nu\vert(P\backslash \{a_P\})}{\left[1\land \left(\inf_{x\in \at(\vert\nu\vert)}\vert 1+\nu(\{x\})\vert\right)\right]\left(1-\max_{P\in \mathcal{P}_n}\vert \nu\vert(P\backslash \{a_P\})\right)}\to 0
\end{align*}
as $n\to\infty$ by \eqref{eq:uniform-conv} and \eqref{eq:uniform-atoms}.

Yet another estimate using \eqref{elementary} shows that in checking \eqref{eq:a.measure-limit} at this point we may and do assume $\nu$ is atomless. Then, recalling \eqref{eq:uniform-conv}, we express, taking the principal branch of the complex logarithm,
\begin{equation*}
\lim_{n\to\infty}e^{-\nu(X)}\prod_{P\in \mathcal{P}_n}(1+\nu(P))=\lim_{n\to\infty}e^{\sum_{P\in \mathcal{P}_n}(\log(1+\nu(P))-\nu(P))};
\end{equation*}
but,  at least for all large enough $n\in \mathbb{N}$,
\begin{equation*}
\left\vert \sum_{P\in \mathcal{P}_n}(\log(1+\nu(P))-\nu(P))  \right\vert\leq \sum_{P\in \mathcal{P}_n}\frac{(\vert\nu\vert (P))^2}{2(1-\vert\nu\vert (P))}\to 0 ,
\end{equation*}
in virtue of \eqref{eq:uniform-conv} and since $\sum_{P\in \mathcal{P}_n}(\vert\nu\vert (P))^2=\vert \nu\vert[\sum_{P\in \mathcal{P}_n}\vert\nu\vert (P)\mathbbm{1}_P]\to 0$ as $n\to \infty$ by \eqref{eq:weird-with-indicators}.
\end{proof}
Also, an elementary inequality. 

\begin{lemma}\label{lemma:elementary-inequality}
For $n\in \mathbb{N}$ and $x\in [0,\infty)^{[n]}$, 
\begin{equation*}
\sum_{\substack{A\in 2^{[n]}, \\\vert A\vert\geq 2}} \prod_{i\in A}x_i\leq  \land_{j\in [n]}\left(\sum_{i\in [n]\backslash \{j\}}x_i\right)\left(\sum_{i\in [n]}x_i\right)e^{\sum_{i\in [n]}x_i}.
\end{equation*}
\end{lemma}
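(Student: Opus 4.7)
Let me write $s:=\sum_{i\in[n]}x_i$ and $s_j:=\sum_{i\in[n]\setminus\{j\}}x_i=s-x_j$. The plan is to show that for every fixed $j\in[n]$,
\[
\sum_{\substack{A\subset[n],\,|A|\ge 2}}\prod_{i\in A}x_i\;\le\;s_j\cdot s\cdot e^s,
\]
and then to take the minimum over $j$. The entire argument is an elementary averaging trick together with two standard scalar inequalities ($1+x\le e^x$ and $e^y-1\le ye^y$ for $y\ge 0$); there is no real obstacle.

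Fix $j\in[n]$. The key observation is that for every $A\subset[n]$ with $|A|\ge 2$, the set $A\setminus\{j\}$ has cardinality $N_A\ge 1$ (this is trivial when $j\notin A$, and when $j\in A$ it follows from $|A|\ge 2$). Writing $\prod_{k\in A}x_k=x_i\prod_{k\in A\setminus\{i\}}x_k$ and summing this identity over $i\in A\setminus\{j\}$ yields
\[
N_A\prod_{k\in A}x_k\;=\;\sum_{i\in A\setminus\{j\}}x_i\prod_{k\in A\setminus\{i\}}x_k,
\]
and dividing by $N_A\ge 1$ gives the pointwise estimate $\prod_{k\in A}x_k\le\sum_{i\in A\setminus\{j\}}x_i\prod_{k\in A\setminus\{i\}}x_k$.

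Summing over $A\subset[n]$ with $|A|\ge 2$ and switching the order of summation (each $A$ with $i\in A\setminus\{j\}$ is uniquely of the form $A=\{i\}\cup C$ with $C\subset[n]\setminus\{i\}$, $C\ne\emptyset$, and then $A\setminus\{i\}=C$) gives
\[
\sum_{|A|\ge 2}\prod_{k\in A}x_k\;\le\;\sum_{i\ne j}x_i\sum_{\emptyset\ne C\subset[n]\setminus\{i\}}\prod_{k\in C}x_k\;=\;\sum_{i\ne j}x_i\Bigl(\prod_{k\ne i}(1+x_k)-1\Bigr).
\]
Using $1+x_k\le e^{x_k}$ bounds the product by $e^{s-x_i}$, and then $e^{s-x_i}-1\le(s-x_i)e^{s-x_i}\le s\,e^s$. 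Therefore the right-hand side is at most
\[
\Bigl(\sum_{i\ne j}x_i\Bigr)\cdot s\cdot e^s\;=\;s_j\cdot s\cdot e^s.
\]
Since $j\in[n]$ was arbitrary, taking the minimum over $j$ yields the lemma.
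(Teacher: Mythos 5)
Your proof is correct. It shares the paper's overall skeleton --- fix $j$, extract a factor of $\sum_{i\ne j}x_i$ from the sum over sets $A$ with $\vert A\vert\ge 2$ (each of which necessarily meets $[n]\backslash\{j\}$), finish with $1+t\le e^t$ and $e^y-1\le ye^y$, and take the minimum over $j$ by symmetry --- but the mechanism for extracting that factor is genuinely different. The paper works with the exact identity $\sum_{\vert A\vert\ge 2}\prod_{i\in A}x_i=\prod_{i\in[n]}(1+x_i)-1-\sum_{i\in[n]}x_i$, splits off the factor $(1+x_j)$, and runs a short chain of algebraic estimates; you instead use a combinatorial double-counting bound, $\prod_{k\in A}x_k\le\vert A\backslash\{j\}\vert\prod_{k\in A}x_k=\sum_{i\in A\backslash\{j\}}x_i\prod_{k\in A\backslash\{i\}}x_k$, and then resum over the marked index $i$. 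Your route makes the role of the hypothesis $\vert A\vert\ge 2$ (namely, that $A$ must contain some $i\ne j$) completely explicit and is arguably more transparent, at the cost of a deliberate overcounting step that is harmless only because all terms are nonnegative; the paper's route is slightly tighter at the intermediate stage (it reaches $\bigl(\sum_{i\ne j}x_i\bigr)(e^s-1)$ before the final relaxation) but both land on the same stated bound.
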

\begin{proof}
We compute and estimate
 \begin{align*}
 \sum_{\substack{A\in 2^{[n]}, \\\vert A\vert\geq 2}} \prod_{i\in A}x_i&=\prod_{i\in [n]}(1+x_i)-1-\sum_{i\in [n]}x_i\\
 &= (1+x_n)\prod_{i\in [n-1]}(1+x_i)-1-x_n-\sum_{i\in [n-1]}x_i\\
&\leq (1+x_n)e^{\sum_{i\in [n-1]}x_i}-(1+x_n)-\sum_{i\in [n-1]}x_i=(1+x_n)\left(e^{\sum_{i\in [n-1]}x_i}-1\right)-\sum_{i\in [n-1]}x_i\\
&\leq e^{x_n}\left(\sum_{i\in [n-1]}x_i\right)e^{\sum_{i\in [n-1]}x_i}-\sum_{i\in [n-1]}x_i=\left(\sum_{i\in [n-1]}x_i\right)\left(e^{\sum_{i\in [n]}x_i}-1\right)\\
&\leq \left(\sum_{i\in [n-1]}x_i\right)\left(\sum_{i\in [n]}x_i\right)e^{\sum_{i\in [n]}x_i}.
\end{align*}
Due to the symmetry of the left-hand side in the $x_i$, $i\in [n]$, the desired conclusion follows.
\end{proof}
With Lemmas~\ref{lemma:limit-involving-complex-measure} and~\ref{lemma:elementary-inequality} in hand we are ready to introduce the exponential map $\Exp:\HHH(\{K=1\})\to \HHH(\{K<\infty\})$, which mimicks  $e_{\int^{\oplus}\FF_s\nu(\dd s)}:\int^{\oplus}\FF_s\nu(\dd s)\to \boldsymbol{\Upgamma}_s(\int^{\oplus}\FF_s\nu(\dd s))$ of Definition~\ref{definition:noise-factorization}. The construction is a remake of the partial commutative result of \cite[Lemma~A5]{vershik-tsirelson} with some  twists to handle the full generality.
\begin{proposition}\label{prop:Exp}
Let $g\in \HHH(\{K=1\})$. We have the following assertions.
\begin{enumerate}[(i)]
\item\label{prop:Exp-1} $g=\oplus_{p\in P}\phi_p(g)$ for all partitions of unity $P$ of $\BBB$. Also, $\phi_{0_\HH}(g)=0$, i.e. $\langle \Omega,g\rangle=0$. And $\phi_x(g)\in \HHH(\{K=1\})\cap \HH_x$ for all $x\in\BBB$.
\item\label{prop:Exp-2} The limit 
\begin{equation}\label{eq:exponential}
\Exp(g):=\lim_{n\to\infty}\otimes_{p\in \at(b_n)}(\Omega+\phi_p(g))
\end{equation}
exists for any $\uparrow$ sequence $(b_n)_{n\in \mathbb{N}}$ in $\mathfrak{F}$ satisfying the property that $\cup_{n\in \mathbb{N}}\{S_{x}:x\in b_n\}$ generates $\Sigma$ on $ \{K=1\}$ up to $\mu$-trivial sets (equivalently, by standardness, separates the points of a $\mu$-conegligible set of $\{K=1\}$); 
such sequences exist and the limit does not depend on the choice of the sequence.
\item\label{prop:Exp-3} $\Exp(g)$ is a multiplicative vector. For $x\in \BBB$, $\phi_x(\Exp(g))=\Exp(\phi_x(g))$. Besides, $\Exp(0)=\Omega$.
\item\label{prop:Exp-4} 
 Let also $h\in \HHH(\{K=1\})$. Then
\begin{equation}\label{scalar-product-exps}
\langle \Exp(h),\Exp(g)\rangle=e^{\mu_{h,g}(S\backslash\at(\mu))}\prod_{s\in \at(\mu)}(1+\mu_{h,g}(\{s\})).
\end{equation}
 \item\label{prop:Exp-5}  The family
\begin{equation*}
\{\Exp(f):f\in \HHH(\{K=1\})\}
\end{equation*}
is total in $\HHH(\{K<\infty\})$.

\end{enumerate}
\end{proposition}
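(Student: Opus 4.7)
My plan addresses the five parts in order, with the computational heart being a single tensor-product inner-product formula that drives both (ii) and (iv). For (i), I would invoke the local tensor identification $\HH=\otimes_{p\in P}\HH_p$ from Proposition~\ref{rmk:parition-of-unity-tensor-decomposition}: together with \eqref{K-local} and \eqref{projections-and-subspaces} this decomposes $\HHH(\{K=1\})$ as the orthogonal direct sum $\oplus_{p\in P}(\HHH(\{K=1\})\cap\HH_p)$, each summand embedded in $\HH$ via $\Omega$ in the complementary slots --- exactly $g=\oplus_{p\in P}\phi_p(g)$. The remaining claims of (i) reduce to $\HHH(\{K=0\})\perp\HHH(\{K=1\})$ (whence $\langle\Omega,g\rangle=0$) and $\phi_x(g)\in\HHH(S_x\cap\{K=1\})\subset\HHH(\{K=1\})\cap\HH_x$.

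For (ii), I fix $b_m\supset b_n$ in $\mathfrak{F}$ and work inside each $\HH_p$, $p\in\at(b_n)$, along the finer decomposition $\HH_p=\otimes_{q\in\at(b_m)\cap 2^p}\HH_q$. The cross terms $\langle\phi_q g,\Omega\rangle=0$ (from (i)) collapse the inner product to $\langle E_{b_m}^{(p)},\Omega+\phi_p g\rangle_{\HH_p}=1+\mu_g(S_p)$; multiplying over $p\in\at(b_n)$ gives
\begin{equation*}
\langle E_{b_m},E_{b_n}\rangle=\prod_{p\in\at(b_n)}(1+\mu_g(S_p))=\Vert E_{b_n}\Vert^2,
\end{equation*}
hence $\Vert E_{b_m}-E_{b_n}\Vert^2=\Vert E_{b_m}\Vert^2-\Vert E_{b_n}\Vert^2$. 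Lemma~\ref{lemma:limit-involving-complex-measure} applied with $\nu=\mu_g$ on the dissecting system $(\{S_p\cap\{K=1\}:p\in\at(b_n)\}\cup\{\{K\ne 1\}\})_{n\in\mathbb{N}}$ then forces convergence of $\Vert E_{b_n}\Vert^2$, hence of $(E_{b_n})$; existence of separating exhausting sequences follows from standardness and the generation of $\Sigma$ (up to $\mu$-null sets) by $\{S_x:x\in\BBB\}$, and independence of the choice of $(b_n)$ from the common refinement $b_n\lor b_n'$.

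Parts (iii)--(iv) are consequences of the same computation. $\Exp(0)=\Omega$ and $\langle\Omega,\Exp(g)\rangle=\lim\prod_p 1=1$ are immediate. Given $x\in\BBB$, I would choose an exhausting $(b_n)$ with $\{x,x'\}\subset b_1$, so $E_{b_n}$ factorizes as $E_{b_n,x}\otimes E_{b_n,x'}$ under $\HH=\HH_x\otimes\HH_{x'}$ and $\Exp(g)=\Exp(\phi_x g)\otimes\Exp(\phi_{x'}g)$ in the limit; applying $\phi_x=\mathbf{1}_{\HH_x}\otimes\vert\Omega\rangle\langle\Omega\vert$ (using $\langle\Omega,\Exp(\phi_{x'}g)\rangle=1$) gives $\phi_x\Exp(g)=\Exp(\phi_x g)$, whence multiplicativity. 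For (iv), the analogous computation with $h$ replacing one $g$ yields $\langle E_{b_n}^{(h)},E_{b_n}^{(g)}\rangle=\prod_p(1+\mu_{h,g}(S_p))$, and Lemma~\ref{lemma:limit-involving-complex-measure} applied to the complex measure $\mu_{h,g}\vert_{\{K=1\}}$ (noting $\mu_{h,g}\ll\mu$ so $\at(\vert\mu_{h,g}\vert)\subset\at(\mu)$, and $\mu_{h,g}(\{s\})=0$ for $s\in\at(\mu)\backslash\{K=1\}$) delivers \eqref{scalar-product-exps}.

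For (v), I would combine Proposition~\ref{proposition:products-total} with a multivariate analyticity argument. Taking $X_p=f_p$ for $p$ in a finite $A\subset P$ and $X_p=\Omega$ otherwise, Proposition~\ref{proposition:products-total} declares the vectors $u_A:=\otimes_{p\in A}f_p\otimes\Omega^{\otimes(P\backslash A)}$ (with $f_p\in\HHH(\{K=1\})\cap\HH_p$) total in $\HHH(\{K<\infty\})$. Setting $h(\mathbf{t}):=\sum_{p\in A}t_p f_p\in\HHH(\{K=1\})$ for $\mathbf{t}\in\mathbb{C}^A$, part (iii) applied iteratively inside $\HH_{\lor A}=\otimes_{p\in A}\HH_p$ gives $\Exp(h(\mathbf{t}))=\otimes_{p\in A}\Exp_p(t_p f_p)\otimes\Omega^{\otimes(P\backslash A)}$, jointly norm-entire in $\mathbf{t}$ by (iv), and Lemma~\ref{lemma:elementary-inequality} applied to the defining limit yields $\Exp_p(t_p f_p)=\Omega+t_p f_p+O(\vert t_p\vert^2)$. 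Consequently the mixed partial $\partial_{t_1}\cdots\partial_{t_{\vert A\vert}}\vert_{\mathbf{t}=0}\Exp(h(\mathbf{t}))=u_A$, and the multivariate Cauchy integral formula presents $u_A$ as a norm-limit of finite linear combinations of exponential vectors $\Exp(h(\mathbf{t}))$, placing $u_A\in\overline{\mathrm{span}}\{\Exp(h):h\in\HHH(\{K=1\})\}$; totality follows. The hardest step is (ii): pinning down the cross-term vanishing and the resulting product formula is delicate but computationally explicit; once it is in place, (iii)--(iv) cascade, and (v) reduces to a fairly standard vector-valued analyticity argument.
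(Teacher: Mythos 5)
Your proposal is correct, and for parts \ref{prop:Exp-1}, \ref{prop:Exp-3}, \ref{prop:Exp-4} and \ref{prop:Exp-5} it runs essentially parallel to the paper's argument (the paper proves \ref{prop:Exp-5} by iterated difference quotients $\lim_{\epsilon\downarrow 0}\frac{\Exp(\epsilon f)-\Omega}{\epsilon}=f$ applied slot by slot rather than by multivariate analyticity and the Cauchy integral formula, but these are two dressings of the same idea and both work). The genuine divergence is in part \ref{prop:Exp-2}. The paper establishes the Cauchy property by brute force: it expands $\Vert E_b-E_c\Vert^2$ for nested $b\subset c$, cancels terms, invokes Lemma~\ref{lemma:elementary-inequality} to reach the bound \eqref{eq:main-estimate}, and concludes by bounded convergence using $S_{\underline{b_n}(s)}\downarrow\{s\}$. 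You instead exploit the identity $\langle E_{b_m},E_{b_n}\rangle=\prod_{p\in\at(b_n)}(1+\mu_g(S_p))=\Vert E_{b_n}\Vert^2$ for nested partitions, so that $\Vert E_{b_m}-E_{b_n}\Vert^2=\Vert E_{b_m}\Vert^2-\Vert E_{b_n}\Vert^2$, reducing everything to convergence of the nondecreasing sequence of norms, which Lemma~\ref{lemma:limit-involving-complex-measure} supplies; this is slicker, dispenses with Lemma~\ref{lemma:elementary-inequality} entirely for the existence step, and makes uniqueness of the limit immediate since the limiting value of the norms is intrinsic to $\mu_g$. The trade-off is that the paper's quantitative estimate \eqref{eq:main-estimate} is reused nowhere else, so nothing is lost. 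One small point of hygiene: your dissecting system must live on $\{K=1\}$ (after discarding a $\mu$-negligible set on which the $S_p$ fail to literally partition or separate), not on all of $S$ with $\{K\ne 1\}$ adjoined as a never-refined cell, since Lemma~\ref{lemma:limit-involving-complex-measure} requires the union of the partitions to separate the points of the whole space; as $\mu_g$ is carried by $\{K=1\}$ this is cosmetic and is exactly the ``after discarding some $\mu$-negligible set'' caveat the paper itself makes in its proof of \ref{prop:Exp-4}.
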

\begin{proof}
\ref{prop:Exp-1}.  By the very definition of $K$, a.e.-$\mu$ on $\{K=1\}$, $S=\sqcup_{p\in P}S_p$ (disjoint union), which gives the first claim. The second assertion  follows from the two facts that $\HHH(\{K=1\})$ is orthogonal to $\HHH(\{K=0\})=\HHH(\{\emptyset_S\})=\mathbb{C}\Omega$ and that $\phi_{0_\HH}=\vert\Omega\rangle\langle\Omega\vert$. The last observation is immediate from the fact that $\Psi(\phi_x(g))=\mathbbm{1}_{S_x}\Psi(g)$.

\ref{prop:Exp-2}. Take any $b\subset c$ from $\mathfrak{F}$. We compute and estimate
\begin{align}
\nonumber&\Vert \otimes_{p\in \at(b)}(\Omega+\phi_p(g))-\otimes_{q\in \at(c)}(\Omega+\phi_q(g))\Vert^2\\\nonumber
&=\Vert \otimes_{p\in \at(b)}(\Omega+\oplus_{q\in \at(c)\cap 2^p}\phi_q(g))-\otimes_{q\in \at(c)}(\Omega+\phi_q(g))\Vert^2 \\\nonumber
&=\Vert \oplus_{A\in 2^{\at(c)}, \exists p\in \at(b)\backepsilon \vert A\cap 2^p \vert\geq 2}\otimes_{q\in A}\phi_q(g)\Vert^2\\\nonumber
&=\sum_{\substack{A\in 2^{\at(c)},\\ \exists p\in \at(b)\backepsilon \vert A\cap 2^p \vert\geq 2}}\prod_{q\in A}\Vert\phi_q(g)\Vert^2\\\nonumber
&\leq \sum_{p\in \at(b)}\sum_{\substack{A\in 2^{\at(c)},\\ \vert A\cap 2^p\vert\geq 2}}\prod_{q\in A}\Vert\phi_q(g)\Vert^2\\\nonumber
&=\sum_{p\in \at(b)}\left(\sum_{\substack{A\in 2^{\at(c)\cap 2^p},\\\vert A\vert\geq 2}}\prod_{q\in A}\Vert\phi_q(g)\Vert^2\right)\prod_{r\in \at(b)\backslash \{p\}}\left(\sum_{A\in 2^{\at(c)\cap 2^r}}\prod_{q\in A}\Vert\phi_q(g)\Vert^2\right)\\\nonumber
&=\sum_{p\in \at(b)}\left(\sum_{\substack{A\in 2^{\at(c)\cap 2^p},\\ \vert A \vert\geq 2}}\prod_{q\in A}\Vert\phi_q(g)\Vert^2\right)\prod_{r\in \at(b)\backslash \{p\}}\prod_{q\in \at(c)\cap 2^r}(1+\Vert \phi_q(g)\Vert^2)\\\nonumber
&\leq e^{\Vert g\Vert^2}\sum_{p\in \at(b)}\left(\Vert\phi_p(g)\Vert^2-\max_{q\in \at(c)\cap 2^p}\Vert \phi_q(g)\Vert^2\right)\Vert\phi_p(g)\Vert^2\\
&= e^{\Vert g\Vert^2}\mu_g\left[\sum_{p\in \at(b)}\land_{q\in \at(c)\cap 2^p}\mu_g(S_p\backslash S_q)\mathbbm{1}_{S_p}\right],\label{eq:main-estimate}
\end{align}
where 
\begin{itemize}
 \item the first equality uses \ref{prop:Exp-1}, 
 \item the second one is got by  expanding the tensor products, canceling the terms got from $\otimes_{p\in \at(b)}(\Omega+\oplus_{q\in \at(c)\cap 2^p}\phi_q(g))$ with some of those of $\otimes_{p\in \at(c)}(\Omega+\phi_p(g))$, the remaining terms in evidence being orthogonal due to \ref{prop:Exp-1} again (recall also \eqref{identification-another'}),
 \item the second inequality uses  that $1+v\leq e^v$ for $v\in [0,\infty)$, as well as Lemma~\ref{lemma:elementary-inequality} together with $\Vert g\Vert^2=\sum_{q\in \at(c)}\Vert \phi_q(g)\Vert^2$ and $\Vert \phi_p(g)\Vert^2=\sum_{q\in \at(c)\cap 2^p}\Vert \phi_q(g)\Vert^2$ for $p\in \at(b)$,
\end{itemize}
while the remaining steps we feel are sufficiently self-explanatory as to not warrant further comment.

\eqref{eq:main-estimate} gives (the Cauchy property and hence) existence of the limit \eqref{eq:exponential} by bounded convergence, since for $\mu$-a.e. $s\in \{K=1\}$, $S_{\underline{b_n}(s)}\downarrow \{s\}$ as $n\to\infty$ a.e.-$\mu$ by the assumed property on $(b_n)_{n\in \mathbb{N}}$. 

If $(\tilde b_n)_{n\in\mathbb{N}}$ is another sequence such as $(b_n)_{n\in\mathbb{N}}$ then so too is $(b_n\lor \tilde b_n)_{n\in \mathbb{N}}$ and we easily conclude uniqueness of the limit using again \eqref{eq:main-estimate}. 

Lastly, a sequence $(b_n)_{n\in\mathbb{N}}$ with the stipulated property surely exists because a countable subfamily of $\{S_x:x\in \BBB\}$ generates $\Sigma$ up to $\mu$-trivial sets.

\ref{prop:Exp-3}. For any fixed $x\in \BBB$ we may insist in  \eqref{eq:exponential} that $x\in b_1$.

\ref{prop:Exp-4}. 
The same sequence $(b_n)_{n\in \mathbb{N}}$ may be, and is here taken for $g$ and $h$ in defining $\Exp(g)$ and $\Exp(h)$ through \eqref{eq:exponential} respectively. 
Then we compute
\begin{align*}
\langle \Exp(h),\Exp(g)\rangle&=\lim_{n\to\infty}\prod_{p\in \at(b_n)}\left(1+\langle \phi_p(h),\phi_p(g)\rangle\right)=\lim_{n\to\infty}\prod_{p\in \at(b_n)}\left(1+\mu_{h,g}(S_p)\right)\\
&=e^{\mu_{h,g}(S\backslash\at(\mu))}\prod_{s\in \at(\mu)}(1+\mu_{h,g}(\{s\})),
\end{align*}
where we have applied Lemma~\ref{lemma:limit-involving-complex-measure} (after discarding some $\mu$-negligible set) in the last equality.

\ref{prop:Exp-5}. Each term of the limit \eqref{eq:exponential} belongs to the collection of Proposition~\ref{proposition:products-total}, a fortiori to $\HHH(\{K<\infty\})$. 
Next  notice that 
\begin{equation*}
\lim_{\epsilon\downarrow 0}\Exp(\epsilon g)=\Omega,
\end{equation*}
since, in the setting of \eqref{eq:exponential},
\begin{equation*}
\Vert \Exp(\epsilon g)- \Omega\Vert^2=\lim_{n\to\infty}\sum_{A\in 2^{\at(b_n)}\backslash \{\emptyset\}}\prod_{a\in A}\epsilon^{2}\Vert \phi_a(g)\Vert^2= \lim_{n\to\infty}\prod_{a\in \at(b_n)}(1+\epsilon^2\Vert \phi_a(g)\Vert^2)-1 \leq e^{\epsilon^2\Vert g\Vert^2}-1,
\end{equation*}
which is $\to 0$ as $\epsilon\downarrow 0$; moreover, 
\begin{equation*}
\lim_{\epsilon\downarrow 0}\frac{\Exp(\epsilon g)-\Omega}{\epsilon}=g,
\end{equation*}
because, again in the setting of \eqref{eq:exponential}, 
\begin{align*}
\left\Vert \frac{\Exp(\epsilon g)- \Omega}{\epsilon}-g\right\Vert^2&=\epsilon^{-2}\lim_{n\to\infty}\sum_{A\in 2^{\at(b_n)},\Vert A\Vert\geq 2}\prod_{a\in A}\epsilon^{2}\Vert \phi_a(g)\Vert^2=\epsilon^{-2} \lim_{n\to\infty}\prod_{a\in \at(b_n)}(1+\epsilon^2\Vert \phi_a(g)\Vert^2)-\epsilon^2\Vert g\Vert^2-1\\
&\leq \frac{e^{\epsilon^2\Vert g\Vert^2}-\epsilon^2\Vert g\Vert^2-1}{\epsilon^2}\to 0\text{ as }\epsilon\downarrow 0.
\end{align*}
Therefore, for any partition of unity $P$ of $\BBB$, for any $Q\subset P$, for all choices of $f\in \prod_{p\in Q}(\HHH(\{K=1\})\cap \HH_p)$, setting $X_p:=f_p$ for $p\in Q$ and $X_p:=\Omega$ for $p\in P\backslash Q$ we see that $\otimes_{p\in P}X_p$ belongs to the closure of the linear span of $\{\Exp(f):f\in \HHH(\{K=1\})\}$ by considering $\Exp(\oplus_{p\in Q}\epsilon_pf_p)=[\otimes_{p\in Q}\Exp(\epsilon_p f_p)]\otimes [\otimes_{p\in P\backslash Q}\Omega]$ for $\epsilon\in (0,\infty)^Q$, and taking a series of limits of linear combinations. Now the requisite totality follows from Proposition~\ref{proposition:products-total}.
\end{proof}
 \begin{corollary}\label{CE}
\ref{classical:C} implies  \ref{classical:E}.
 \end{corollary}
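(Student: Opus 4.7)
The plan is to simply combine parts~\ref{prop:Exp-3} and~\ref{prop:Exp-5} of Proposition~\ref{prop:Exp}, having first translated hypothesis~\ref{classical:C} into the statement $\HHH(\{K<\infty\})=\HH$.

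More concretely, assume \ref{classical:C}, so $K<\infty$ a.e.-$\mu$. Then $\{K<\infty\}=S$ up to a $\mu$-negligible set, and since the spectral subspace only depends on the $\mu$-equivalence class of the underlying measurable set, we have $\HHH(\{K<\infty\})=\HHH(S)=\HH$ (the latter being just the pull-back of $\int^\oplus\GG_s\mu(\dd s)$ under $\Psi$).

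Now invoke Proposition~\ref{prop:Exp}\ref{prop:Exp-5}: the family $\{\Exp(f):f\in \HHH(\{K=1\})\}$ is total in $\HHH(\{K<\infty\})$, hence, by the previous paragraph, total in $\HH$. On the other hand, Proposition~\ref{prop:Exp}\ref{prop:Exp-3} asserts that $\Exp(f)$ is a multiplicative vector for every $f\in \HHH(\{K=1\})$. Thus the multiplicative vectors contain a total subset of $\HH$ and are therefore themselves total, which is \ref{classical:E}.

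There is no real obstacle here: the deduction is essentially a one-line combination of the two cited items of Proposition~\ref{prop:Exp}, the only substantive content being the trivial observation that $K<\infty$ a.e.-$\mu$ collapses the ``classical part'' $\HHH(\{K<\infty\})$ onto the whole Hilbert space $\HH$. All the genuine work has already been carried out in constructing the exponential map and establishing its properties.
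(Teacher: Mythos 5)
Your proposal is correct and follows exactly the paper's own (very terse) proof: combine Proposition~\ref{prop:Exp}\ref{prop:Exp-3} and~\ref{prop:Exp-5}, noting that under \ref{classical:C} the classical part $\HHH(\{K<\infty\})$ is all of $\HH$. You have merely spelled out the details that the paper leaves implicit.
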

 \begin{proof}
Proposition~\ref{prop:Exp}, Items~\ref{prop:Exp-3} and~\ref{prop:Exp-5}.
 \end{proof}
Recall the notation of Definition~\ref{definition:noise-factorization} and that $\mu^{(1)}=\mu\vert_{\{K=1\}}$ is the restriction of $\mu$ to $\{K=1\}$.
\begin{proposition} \label{proposition:fock-spectrum}
 There exists a unique unitary isomorphism between  $\boldsymbol{\Upgamma}_s(\int^\oplus\GG_s\mu^{(1)}(\dd s))$ and $\HHH(\{K<\infty\})$, which maps $e_{\int^\oplus\GG_s\mu^{(1)}(\dd s)}(g)$ to $\Exp(\Psi^{-1}(g))$ for all $g\in \int^\oplus\GG_s\mu^{(1)}(\dd s)$.
\end{proposition}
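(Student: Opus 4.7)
\smallskip

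\noindent\textbf{Proof proposal.} The plan is to exhibit the unitary as the unique continuous linear extension of the prescribed assignment on exponential vectors, invoking the standard criterion that an inner-product-preserving map defined on a total subset of a Hilbert space extends uniquely to an isometric embedding of the whole space, and becomes a unitary precisely when its image is total as well.

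First I would dispose of uniqueness: the exponential vectors $\{e_{\int^\oplus\GG_s\mu^{(1)}(\dd s)}(g):g\in \int^\oplus\GG_s\mu^{(1)}(\dd s)\}$ are, by the very construction in Definition~\ref{definition:noise-factorization} (discrete part spanned by ``geometric'' tensors, continuous part by the usual Guichardet-type totality), total in $\boldsymbol{\Upgamma}_s(\int^\oplus\GG_s\mu^{(1)}(\dd s))$; so any continuous linear map is determined by its values on them. For existence, I would check that the assignment
\begin{equation*}
U_0:\;e_{\int^\oplus\GG_s\mu^{(1)}(\dd s)}(g)\longmapsto \Exp(\Psi^{-1}(g)),\qquad g\in \int^\oplus\GG_s\mu^{(1)}(\dd s),
\end{equation*}
is inner-product-preserving between totally-spanning subsets. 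The image-side totality in $\HHH(\{K<\infty\})$ is exactly Proposition~\ref{prop:Exp}\ref{prop:Exp-5} (noting that $\Psi$ carries $\HHH(\{K=1\})$ isomorphically onto $\int^\oplus\GG_s\mu^{(1)}(\dd s)$, so $f$ in that proposition is $\Psi^{-1}(g)$ for some $g$).

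The heart of the argument is the scalar product check. For $g,h\in \int^\oplus\GG_s\mu^{(1)}(\dd s)$, writing $\tilde g:=\Psi^{-1}(g)$, $\tilde h:=\Psi^{-1}(h)$, the measure $\mu_{\tilde h,\tilde g}=\langle \Psi(\tilde h),\Psi(\tilde g)\rangle\cdot\mu$ is supported in $\{K=1\}$ (since $\Psi(\tilde g)$ and $\Psi(\tilde h)$ vanish off $\{K=1\}$), hence
\begin{equation*}
\mu_{\tilde h,\tilde g}(S\setminus\at(\mu))=\int_{\{K=1\}\setminus\at(\mu)}\langle h(s),g(s)\rangle\mu(\dd s)=\int\langle h(s),g(s)\rangle(\mu^{(1)})_c(\dd s),
\end{equation*}
and $\mu_{\tilde h,\tilde g}(\{s\})=\mu(\{s\})\langle h(s),g(s)\rangle$ for $s\in\at(\mu)\cap\{K=1\}=\at(\mu^{(1)})$, while vanishing for $s\in\at(\mu)\setminus\{K=1\}$. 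Substituting into Proposition~\ref{prop:Exp}\ref{prop:Exp-4}:
\begin{equation*}
\langle \Exp(\tilde h),\Exp(\tilde g)\rangle=e^{\int\langle h(s),g(s)\rangle(\mu^{(1)})_c(\dd s)}\prod_{s\in\at(\mu^{(1)})}\bigl(1+\mu^{(1)}(\{s\})\langle h(s),g(s)\rangle\bigr),
\end{equation*}
which is precisely the right-hand side of \eqref{equation:scalar-product-in-exponential-Fock-space} computed with $\nu=\mu^{(1)}$ and field $(\GG_s)$; i.e. it equals $\langle e(h),e(g)\rangle$ in $\boldsymbol{\Upgamma}_s(\int^\oplus\GG_s\mu^{(1)}(\dd s))$.

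With inner products matching, $U_0$ extends by linearity and continuity to an isometry $U$ from $\boldsymbol{\Upgamma}_s(\int^\oplus\GG_s\mu^{(1)}(\dd s))$ into $\HHH(\{K<\infty\})$; surjectivity onto $\HHH(\{K<\infty\})$ follows because the image contains every $\Exp(f)$, $f\in \HHH(\{K=1\})$, and those are total by Proposition~\ref{prop:Exp}\ref{prop:Exp-5}. The main obstacle I anticipate is purely bookkeeping: carefully matching the continuous and atomic parts of $\mu$ with those of $\mu^{(1)}$ (remembering that $\emptyset_S\in\at(\mu)\setminus\{K=1\}$ contributes to $\at(\mu)$ but not to $\at(\mu^{(1)})$, and that $\mu_{\tilde h,\tilde g}(\{\emptyset_S\})=0$ since $\HHH(\{K=1\})\perp\HHH(\{K=0\})=\mathbb{C}\Omega$ by Proposition~\ref{prop:Exp}\ref{prop:Exp-1}), so that no spurious factor of $(1+\mu(\{\emptyset_S\})\cdot 0)=1$ is mishandled. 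Once that is in order, the rest is a routine Hilbert-space extension argument.
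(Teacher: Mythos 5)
Your proposal is correct and follows essentially the same route as the paper: both establish that the prescribed assignment is scalar-product-preserving between total sets (totality of the exponential vectors in $\boldsymbol{\Upgamma}_s(\int^\oplus\GG_s\mu^{(1)}(\dd s))$ on one side, Proposition~\ref{prop:Exp}\ref{prop:Exp-5} on the other, and the inner-product match via Proposition~\ref{prop:Exp}\ref{prop:Exp-4} against \eqref{equation:scalar-product-in-exponential-Fock-space}), and then invoke the standard unique unitary extension. Your explicit bookkeeping of $\at(\mu)$ versus $\at(\mu^{(1)})$ and the harmless factor at $\emptyset_S$ is a welcome elaboration of what the paper leaves implicit.
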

\begin{proof}
It is well-known that the exponential vectors are total in $\Gamma_s(\int^\oplus\GG_s\mu^{(1)}_c(\dd s))$ and one verifies easily that this is true also of $\gamma_s(\oplus_{a\in \at(\mu^{(1)})}\mu^{(1)}(\{a\})\GG_a)$ (it is indeed implicit in the observation concerning the unitary equivalence of Examples~\ref{example:factorizations:discrete-Fock}  and~\ref{example:discrete-factorization} that was made on p.~\pageref{page:obseervation}). We conclude that the exponential vectors are total in $\boldsymbol{\Upgamma}_s(\int^\oplus\GG_s\mu^{(1)}(\dd s))$. Thus, by  Proposition~\ref{prop:Exp}, Items~\ref{prop:Exp-4}  and~\ref{prop:Exp-5}, and by \eqref{equation:scalar-product-in-exponential-Fock-space}, the stipulated association is scalar product-preserving between total sets, therefore it extends uniquely to a unitary isomorphism.
\end{proof}

 \begin{corollary}\label{CD}
\ref{classical:C} implies \ref{classical:D}.
 \end{corollary}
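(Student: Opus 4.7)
The plan is to invoke Proposition~\ref{proposition:fock-spectrum} directly. Hypothesis \ref{classical:C} gives $\HHH(\{K<\infty\})=\HH$, so that proposition yields a unitary isomorphism $U\colon\HH\to \boldsymbol{\Upgamma}_s(\int^\oplus\GG_s\mu^{(1)}(\dd s))$ satisfying $U(\Exp(\Psi^{-1}(g)))=e_{\int^\oplus\GG_s\mu^{(1)}(\dd s)}(g)$ for every $g\in \int^\oplus\GG_s\mu^{(1)}(\dd s)$. In particular $U(\Omega)=U(\Exp(0))=e_{\int^\oplus\GG_s\mu^{(1)}(\dd s)}(0)=1$, the vacuum vector of the Fock unital factorization.

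For each $x\in\BBB$ set $W:=S_x\cap\{K=1\}$. The goal is to show that $UxU^\ast$ coincides with the Fock factor $F_{\int^\oplus\GG_s\mu^{(1)}(\dd s)}(W)$, which would exhibit $U(\BBB)$ as a unital subfactorization of the Fock factorization and establish \ref{classical:D}. The key is to match the tensor decomposition $\HH=\HH_x\otimes \HH_{x'}$ under the identification \eqref{identification} with the Fock decomposition of the target into the pieces over $W$ and $\{K=1\}\setminus W$. I first note that on $\{K=1\}$ the relation \eqref{K-local} applied to the partition of unity $\{x,x'\}$ reads $1=K(\pr_x)+K(\pr_{x'})$, and $\pr_y^{-1}(\{\emptyset_S\})=S_{y'}$ together with $\{K=0\}=\{\emptyset_S\}$ a.e.-$\mu$ gives $\{K=1\}\subset S_x\sqcup S_{x'}$ a.e.-$\mu$. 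Thus $W$ and $\{K=1\}\setminus W=S_{x'}\cap\{K=1\}$ partition $\{K=1\}$ modulo $\mu^{(1)}$-null sets, so the Fock target splits canonically as
\begin{equation*}
\boldsymbol{\Upgamma}_s\Bigl(\int^\oplus\GG_s\mu^{(1)}(\dd s)\Bigr)=\boldsymbol{\Upgamma}_s\Bigl(\int^\oplus_W\GG_s\mu^{(1)}(\dd s)\Bigr)\otimes \boldsymbol{\Upgamma}_s\Bigl(\int^\oplus_{\{K=1\}\setminus W}\GG_s\mu^{(1)}(\dd s)\Bigr),
\end{equation*}
with the exponential vector on the left becoming the tensor product of exponential vectors on the right whenever the argument splits according to this partition.

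Now, for any $g\in \HHH(\{K=1\})$ one has $\phi_x(g)+\phi_{x'}(g)=g$ (because $\phi_x\phi_{x'}=\phi_{x\land x'}=\vert\Omega\rangle\langle\Omega\vert$ and $g\perp\Omega$), with $\phi_x(g)\in \HHH(W)$ and $\phi_{x'}(g)\in \HHH(\{K=1\}\setminus W)$; on the source side, Proposition~\ref{prop:Exp}\ref{prop:Exp-3} combined with the multiplicativity of $\Exp(g)$ and Definition~\ref{definition:multiplicative-vector} gives the factorization $\Exp(g)=\Exp(\phi_x(g))\otimes \Exp(\phi_{x'}(g))$ under $\HH=\HH_x\otimes\HH_{x'}$; on the target side, the exponential vector factorizes analogously. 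Since $\{\Exp(g):g\in\HHH(\{K=1\})\}$ is total in $\HH=\HHH(\{K<\infty\})$ by Proposition~\ref{prop:Exp}\ref{prop:Exp-5}, these two matching factorizations force $U$ to send $\HH_x$ onto $\boldsymbol{\Upgamma}_s(\int^\oplus_W\GG_s\mu^{(1)}(\dd s))$ and $\HH_{x'}$ onto $\boldsymbol{\Upgamma}_s(\int^\oplus_{\{K=1\}\setminus W}\GG_s\mu^{(1)}(\dd s))$ while preserving the tensor structure. Because $x$ equals $1_{\HH_x}\otimes 0_{\HH_{x'}}$, it follows at once that $UxU^\ast=F_{\int^\oplus\GG_s\mu^{(1)}(\dd s)}(W)$, finishing the proof.

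The only real obstacle is the alignment of the two tensor decompositions; the crucial ingredient that makes it work is the partition property $\{K=1\}=W\sqcup(\{K=1\}\setminus W)$ a.e.-$\mu$, since without $K<\infty$ one would be losing the ``classical part'' outside $\{K<\infty\}$, and in general the sets $S_x\cap\{K=1\}$ and $S_{x'}\cap\{K=1\}$ would fail to cover $\{K=1\}$. Once this combinatorial fact is in place, everything else reduces to the multiplicativity of $\Exp(g)$ and the matching behavior of the exponential vectors, both already recorded in Proposition~\ref{prop:Exp}.
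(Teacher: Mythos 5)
Your proof is correct and follows essentially the same route as the paper's: both invoke Proposition~\ref{proposition:fock-spectrum} and use the multiplicativity of $\Exp$ (Proposition~\ref{prop:Exp}\ref{prop:Exp-3}) together with the multiplicative character of the Fock exponential vectors to identify the image of each $x\in\BBB$ with $F_{\int^\oplus\GG_s\mu^{(1)}(\dd s)}(S_x\cap\{K=1\})$. The only cosmetic difference is that the paper secures surjectivity of $U\vert_{\HH_x}$ by localizing the spectral resolution to $x$ and reapplying Proposition~\ref{prop:Exp}\ref{prop:Exp-5}, whereas you extract it from the surjectivity of $U$ combined with its tensor-factorized form, which is an equally valid inference.
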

 \begin{proof}
 For all $x\in \BBB$, ``localizing'' everything to  $x$ in the manner of the first paragraph of Subsection~\ref{subsection:spectral-everything}, noting that the counting map associated to $(\BBB_x,\Omega)$ and $(\mu_x,\Phi_x)$ is given by $K\vert_{S_x}$ (a.e.-$\mu_x$),  and then applying Proposition~\ref{prop:Exp}\ref{prop:Exp-5}, gives that $\{\Exp(g):g\in \HHH(\{K=1\})\cap \HH_x\}$ is total in $\HH_x\cap \HHH(\{K<\infty\})=\HH_x$ (the equality thanks to \ref{classical:C}: $K<\infty$ a.e.-$\mu$). This, the multiplicative character of the exponential vectors of $\boldsymbol{\Upgamma}_s(\int^\oplus\GG_s\mu^{(1)}(\dd s))$ and Proposition~\ref{prop:Exp}\ref{prop:Exp-3} imply that the inverse of  the unitary isomorphism of  Proposition~\ref{proposition:fock-spectrum} carries $\Omega$ to $1$ and $\BBB$ onto a subfactorization of $\mathrm{Fock}(\int^\oplus\GG_s\mu^{(1)}(\dd s))$, indeed an $x\in \BBB$ onto $F_{\int^\oplus\GG_s\mu^{(1)}(\dd s)}(S_x\cap \{K=1\})$. But by definition this means that $(\BBB,\Omega)$ is isomorphic to a unital subfactorization of $(\mathrm{Fock}(\int^\oplus\GG_s\mu^{(1)}(\dd s)),1)$ and is thus of Fock type.
 \end{proof}
 Trivially
 \begin{proposition}\label{DA}
 \ref{classical:D} implies \ref{classical:A}.
 \end{proposition}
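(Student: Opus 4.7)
The strategy is to pull the complete ambient Fock factorization back through the isomorphism witnessing the Fock type of $(\BBB,\Omega)$. By hypothesis~\ref{classical:D} and Definition~\ref{definition:noise-factorization}, there exists a standard $\sigma$-finite measure space $(T,\TT,\nu)$, a measurable field $(\FF_s)_{s\in T}$ of separable a.e.-$\nu$ non-zero Hilbert spaces, a unital subfactorization $(\tilde\BBB,1)$ of $(\mathrm{Fock}(\int^\oplus\FF_s\nu(\dd s)),1)$, and a unitary isomorphism $U\colon \HH\to \boldsymbol{\Upgamma}_s(\int^\oplus\FF_s\nu(\dd s))$ sending $\Omega$ to $1$ and carrying $\BBB$ onto $\tilde \BBB$.

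First, I would set $\overline{\BBB}:=\{U^{-1}FU:F\in \mathrm{Fock}(\int^\oplus\FF_s\nu(\dd s))\}$. Conjugation by the unitary $U$ is a bijection $\hat{\boldsymbol{\Upgamma}_s(\int^\oplus\FF_s\nu(\dd s))}\to \hat\HH$ that preserves the commutant operation, meets, arbitrary joins, and the property of being a type $\mathrm{I}$ factor; hence $\overline{\BBB}$ is again a factorization of $\HH$, it is complete because $\mathrm{Fock}(\int^\oplus\FF_s\nu(\dd s))$ is complete (as noted right after~\eqref{equation:scalar-product-in-exponential-Fock-space}), and it plainly contains $\BBB=U^{-1}\tilde\BBB U$ as a subfactorization.

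Next, I would verify that $\Omega$ is a factorizable vector of $\overline{\BBB}$, making $(\overline\BBB,\Omega)$ a unital factorization. This is immediate from Proposition~\ref{proposition:factorizable-myriad}\ref{proposition:factorizable-myriad:iv}: the vacuum vector $1$ is a factorizable vector of $\mathrm{Fock}(\int^\oplus\FF_s\nu(\dd s))$, so for every $F\in \mathrm{Fock}(\int^\oplus\FF_s\nu(\dd s))$ the pair $(F,F')$ admits a tensor-product decomposition of the ambient Hilbert space in which $1$ splits as $\Theta\otimes\Theta'$; applying $U^{-1}$ pointwise transports this data into a tensor-product decomposition of $\HH$ that exhibits $\Omega$ as a product of norm one vectors relative to any element of $\overline{\BBB}$ and its commutant. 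With this, $(\overline{\BBB},\Omega)$ is a complete unital factorization of $\HH$ having $(\BBB,\Omega)$ as a (unital) subfactorization, which is exactly~\ref{classical:A}.

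There is no real obstacle here; the entire content of the implication is that completeness and the choice of unit are preserved under the evident unitary transport of structure, and the role of the Fock hypothesis is merely to supply a concrete complete unital factorization into which $(\BBB,\Omega)$ embeds.
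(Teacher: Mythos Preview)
Your proof is correct and takes essentially the same approach as the paper. The paper's own proof is a single sentence---``A Fock factorization is complete''---leaving the unitary transport implicit; you have simply unpacked that transport explicitly, which is entirely sound.
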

 \begin{proof}
 A Fock factorization  is complete. 
 \end{proof}
   \begin{remark}
 The proof of Corollary~\ref{CD} shows that the factors of a classical unital factorization $(\BBB,\Omega)$ can be indexed by the $\mu^{(1)}$-equivalence classes of the sets $S_x\cap \{K=1\}$, $x\in \BBB$; moreover, this indexation extends to an indexation by the measure algebra of $\mu^{(1)}$ of the factors of a complete unital factorization having $(\BBB,\Omega)$ as a subfactorization.
\end{remark}

\subsection{Multiplicative vectors are carried by the classical part}\label{subsection:multiplicative-carried}
We follow in this subsection quite closely \cite[Section~9]{vidmar-noise}, but trimmed down to our needs.

It is easy to connect multiplicative vectors to spectral independence probabilities:
\begin{proposition}\label{proposition:multiplicative-and-independence}
If $f$ is a multiplicative vector then $\mu_{f/\Vert f\Vert}$ is a spectral independence probability. 
\end{proposition}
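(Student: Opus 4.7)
The plan is to set $\nu:=\mu_{f/\Vert f\Vert}=\Vert f\Vert^{-2}\mu_f$ and verify the three requirements in the definition of a spectral independence probability in turn. Absolute continuity w.r.t. $\mu$ is immediate from the definition of $\mu_g$. To see that $\nu(\{\emptyset_S\})>0$, note that $\phi_{0_\HH}=\vert\Omega\rangle\langle\Omega\vert$ together with $\langle\Omega,f\rangle=1$ gives $\phi_{0_\HH}(f)=\Omega$, whence $\mu_f(\{\emptyset_S\})=\Vert\phi_{0_\HH}(f)\Vert^2=1$ and so $\nu(\{\emptyset_S\})=\Vert f\Vert^{-2}>0$.

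The bulk of the work is the independence of $\pr_x$ and $\pr_{x'}$ under $\nu$ for each $x\in\BBB$. Since $\Sigma_x=\sigma(\{S_{u\lor x'}:u\in \BBB_x\})\lor\mathcal{N}_\mu$ and $\Sigma_{x'}=\sigma(\{S_{v\lor x}:v\in\BBB_{x'}\})\lor\mathcal{N}_\mu$ by definition, and since each of these generating families is a $\pi$-system (by \eqref{spectral-sets:pi-system}, $S_{u\lor x'}\cap S_{u_1\lor x'}=S_{(u\lor x')\land (u_1\lor x')}=S_{(u\land u_1)\lor x'}$ a.e.-$\mu$ after applying distributivity in $\BBB$), Dynkin's $\pi$--$\lambda$ lemma reduces the claim to checking the product rule
\begin{equation*}
\nu(S_{u\lor x'}\cap S_{v\lor x})=\nu(S_{u\lor x'})\,\nu(S_{v\lor x}),\qquad (u,v)\in \BBB_x\times\BBB_{x'}.
\end{equation*}

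For such $u,v$, distributivity in $\BBB$ combined with $u\land x'=0_\HH=v\land x$ yields $(u\lor x')\land(v\lor x)=u\lor v$; hence by \eqref{spectral-sets:pi-system}, $S_{u\lor x'}\cap S_{v\lor x}=S_{u\lor v}$ a.e.-$\mu$, and the identity to prove reads, after multiplying by $\Vert f\Vert^4$,
\begin{equation*}
\Vert\phi_{u\lor v}(f)\Vert^2\,\Vert f\Vert^2=\Vert\phi_{u\lor x'}(f)\Vert^2\,\Vert\phi_{v\lor x}(f)\Vert^2.
\end{equation*}
This is where multiplicativity enters decisively: under the identification $\HH=\HH_x\otimes\HH_{x'}$ from \eqref{identification}, Proposition~\ref{proposition:restriction-of-noise-factorization} lets us write, for any $y\in\BBB$, $\phi_y=\phi_{y\land x}\vert_{\HH_x}\otimes \phi_{y\land x'}\vert_{\HH_{x'}}$. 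Applying this to $y\in\{u\lor v,\,u\lor x',\,v\lor x,\,1_\HH\}$ and using $f=\phi_x(f)\otimes\phi_{x'}(f)$, the four norms factor as products of the two ``local'' norms $\Vert\phi_u\vert_{\HH_x}(\phi_x(f))\Vert$ and $\Vert\phi_v\vert_{\HH_{x'}}(\phi_{x'}(f))\Vert$ together with $\Vert\phi_x(f)\Vert$ and $\Vert\phi_{x'}(f)\Vert$, and the required equality is an immediate bookkeeping check.

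The only potentially tricky step is the bookkeeping of projections across the identification \eqref{identification}; everything else is a direct appeal to the definitions and to Dynkin's lemma.
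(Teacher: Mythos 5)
Your proof is correct and follows essentially the same route as the paper's: both verify the product rule $\nu(S_{u\lor x'}\cap S_{v\lor x})=\nu(S_{u\lor x'})\nu(S_{v\lor x})$ on the generating $\pi$-systems by exploiting the multiplicativity of $f$ across the partition $\{x,x'\}$ (the paper via the identity $\Vert\phi_{u\lor v}(f)\Vert^2=\Vert\phi_u(f)\Vert^2\Vert\phi_v(f)\Vert^2/\vert\langle\Omega,f\rangle\vert^2$ and clever substitutions $u=x$, $v=x'$; you via the explicit tensor bookkeeping under \eqref{identification}), and both conclude with Dynkin's lemma.
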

\begin{proof}
For greater notational ease replace $f$ with $f/\Vert f\Vert$ (so now $\Vert f\Vert=1$  and $f/\langle \Omega,f\rangle$ is a multiplicative vector). Since $\langle \Omega,f\rangle\ne 0$, $\mu_{f}$ charges $\emptyset_S$.  Further, we have $\mu_f(S)=\Vert f\Vert^2=1$. Besides, for $x\in \BBB$, $u\in \BBB_x$ and $v\in \BBB_{x'}$, 
\begin{equation*}
\mu_f(S_{u\lor x'}\cap S_{x\lor v})=\mu_f(S_{u\lor v})=\Vert \phi_{u\lor v}(f)\Vert^2=\frac{\Vert\phi_u(f)\Vert^2\Vert\phi_v(f)\Vert^2}{\vert \langle\Omega,f\rangle\vert^2},
\end{equation*}
by the multiplicativity of $\frac{1}{\langle\Omega,f\rangle}\phi_{u\lor v}f$. Taking $u=x$, resp. $v=x'$, resp. $u=x$ and $v=x'$, we get $\mu_f(S_{x\lor v})=\frac{\Vert\phi_x(f)\Vert^2\Vert\phi_v(f)\Vert^2}{\vert \langle\Omega,f\rangle\vert^2}$, resp. $\mu_f(S_{u\lor x'})=\frac{\Vert\phi_u(f)\Vert^2\Vert\phi_{x'}(f)\Vert^2}{\vert \langle\Omega,f\rangle\vert^2}$, resp. $1=\frac{\Vert\phi_x(f)\Vert^2\Vert\phi_{x'}(f)\Vert^2}{\vert \langle\Omega,f\rangle\vert^2}$, which altogether allows to verify that $\mu_f(S_{u\lor x'}\cap S_{x\lor v})= \mu_f(S_{u\lor x'}) \mu_f(S_{x\lor v})$. Independence of $\pr_x$ and $\pr_{x'}$, i.e. of $\Sigma_x$ and $\Sigma_{x'}$ under $\mu_f$ now follows by an application of Dynkin's lemma (the general principle of which was noted in the comment immediately following the statement of Proposition~\ref{proposition:raise-independence}). 
\end{proof}

 \begin{proposition}\label{CF}
\ref{classical:C} implies \ref{classical:F}.
 \end{proposition}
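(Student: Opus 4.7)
The plan is to exhibit a single multiplicative vector $f$ whose associated spectral measure $\mu_f$ is equivalent to $\mu$; once this is done, $\mu_{f/\Vert f\Vert}$ serves as the required spectral independence probability by Proposition~\ref{proposition:multiplicative-and-independence}, equivalence to $\mu$ being preserved by normalization.

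I piggyback on Corollary~\ref{CD} (the Fock identification already established): the unitary $U$ of Proposition~\ref{proposition:fock-spectrum} unitarily identifies $(\BBB,\Omega)$ with the unital subfactorization $\{F_{\int^\oplus\GG_s\mu^{(1)}(\dd s)}(S_x\cap\{K=1\}):x\in\BBB\}$ of $(\mathrm{Fock}(\int^\oplus\GG_s\mu^{(1)}(\dd s)),1)$. Since $\{S_x\cap\{K=1\}:x\in\BBB\}$ generates the $\sigma$-field of $\mu^{(1)}$ modulo $\mu^{(1)}$-trivial sets (inherited from the corresponding property of $\{S_x:x\in\BBB\}$), Subsection~\ref{subsection:spectrum-of-Fock} identifies the spectrum of $(\BBB,\Omega)$ with the symmetric measure spectrum $(\widetilde{\mu^{(1)}},\tilde\Psi)$ on $((2^{\{K=1\}})_{\mathrm{fin}},\widetilde{\mu^{(1)}})$. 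By mod-$0$ uniqueness of spectra of abelian von Neumann algebras, I work henceforth with this concrete spectrum in place of $(\mu,\Psi)$.

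Next, select an $\mathrm{L}^2$-section $u\in\int^\oplus\GG_s\mu^{(1)}(\dd s)$ with $u(s)\ne 0$ for $\mu^{(1)}$-a.e.\ $s$: combine a measurable field of unit vectors in $(\GG_s)_{s\in\{K=1\}}$ (furnished by the standardness of the measurable field) with a strictly positive weight in $\mathrm{L}^2(\mu^{(1)})$ (available by $\sigma$-finiteness of $\mu^{(1)}$). Put $f:=e_{\int^\oplus\GG_s\mu^{(1)}(\dd s)}(u)$. As an exponential vector, $f$ is multiplicative relative to the ambient Fock factorization; since multiplicativity is phrased through the intrinsic projections $\phi_x$, the same $f$ is multiplicative relative to the subfactorization $(\BBB,\Omega)$.

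It remains to verify that $\tilde\Psi(f)(A)\ne 0$ for $\widetilde{\mu^{(1)}}$-a.e.\ $A$. From Subsection~\ref{subsection:spectrum-of-Fock}, $\tilde\Psi(f)(A)=\otimes_{a\in A}u(a)$, so the vanishing locus equals $\{A:A\cap N\ne\emptyset\}$ with $N:=\{u=0\}$ being $\mu^{(1)}$-null. Expanding $\widetilde{\mu^{(1)}}=\sum_{n\in\mathbb{N}_0}(n!)^{-1}(q_n)_\star(\mu^{(1)})^n\vert_{(\{K=1\}^n)_{\ne}}$ and using $\sigma$-finiteness of $\mu^{(1)}$ (so $(\mu^{(1)})^n(\pi_j^{-1}(N))=0$ for each $j\in[n]$, via covering by boxes of finite measure in the remaining coordinates), one deduces $\widetilde{\mu^{(1)}}(\{A:A\cap N\ne\emptyset\})=0$. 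The principal structural obstacle is pinning the spectrum of $(\BBB,\Omega)$ down concretely enough to compute $\Psi(f)$ pointwise; the Fock model of Subsection~\ref{subsection:spectrum-of-Fock} combined with Corollary~\ref{CD} accomplishes exactly this, after which non-vanishing reduces to the Fubini-type product-measure observation above.
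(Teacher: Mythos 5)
Your proposal is correct and follows essentially the same route as the paper: both arguments produce a $\mu^{(1)}$-a.e. nonvanishing section $u$ of the direct integral (your unit-field-times-positive-weight construction is the paper's appeal to orthonormal frames), take the exponential/$\Exp$ vector as the multiplicative vector, invoke Proposition~\ref{proposition:multiplicative-and-independence}, and use the Fock spectral model of Subsection~\ref{subsection:spectrum-of-Fock} together with mod-$0$ uniqueness of spectral resolutions to see that the resulting spectral measure charges $\mu$-a.e. point, hence is equivalent to $\mu$. Your explicit Fubini verification that $\{A:A\cap N\ne\emptyset\}$ is $\widetilde{\mu^{(1)}}$-null merely spells out a step the paper leaves implicit.
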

 \begin{proof}
Thanks e.g. to the existence of orthonormal frames in direct integrals \cite[Proposition~II.1.1(ii)]{dixmier1981neumann} there is $f\in \HHH(\{K=1\})$ for which $\Psi(f)\ne 0$ a.e.-$\mu$ on $\{K=1\}$. The latter ensures that, in the notation of Subsection~\ref{subsection:spectrum-of-Fock}, $E(\Psi(f)\vert_{\{K=1\}})\ne 0$  a.e.-$\widetilde{\mu^{(1)}}$. From Propositions~\ref{prop:Exp}\ref{prop:Exp-3} and~\ref{proposition:multiplicative-and-independence}, and from $f\in \HHH(\{K=1\})$, we deduce that $\mu_{\Exp(f)/\Vert \Exp(f)\Vert}$ is a spectral independence probability. Finally, Proposition~\ref{proposition:fock-spectrum}, the proof of Corollary~\ref{CD}, the discussion of Subsection~\ref{subsection:spectrum-of-Fock} and the ``uniqueness of spectral resolutions of abelian von Neumann algebras'' \cite[A.85]{dixmier-c-star} yield that the property $E(\Psi(f)\vert_{\{K=1\}})\ne 0$  a.e.-$\widetilde{\mu^{(1)}}$ renders also  $\Psi(\Exp(f))\ne 0$ a.e.-$\mu$. This in turn results in $\mu_{\Exp(f)/\Vert \Exp(f)\Vert}$ being equivalent to $\mu$. 
 \end{proof}

The result of Proposition~\ref{theorem:new-conditioon-for-classicality} to follow is key. We quote beforehand an observation of purely probabilistic flavour,  which shall be applied in its proof. 

\begin{lemma}\label{proposition:random-set-domincance}
Let $n\in \mathbb{N}$. Under a probability $\PP$ let $\Gamma$ be a random subset of $[n]$ which excludes each $i\in [n]$ with probability $q_i$ independently of the others (in symbols, ${(\mathbbm{1}_{\{i\notin\Gamma\}})_{i\in [n]}}_\star\PP=\times_{i\in [n]}((1-q_i)\delta_0+q_i\delta_1)$); similarly let $\tilde{\Gamma}$ be a random subset of $[n]$ which excludes each $i\in [n]$ with probability $\sqrt[n]{q_1\cdots q_n}$ independently of the others. Then $\vert \Gamma\vert\leq \vert\tilde{\Gamma}\vert$ in first-order stochastic dominance. \cite[Proposition~9.9(ii)]{vidmar-noise}\qed
\end{lemma}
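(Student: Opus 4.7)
The plan is a smoothing argument that reduces the general case to a two-variable check via the AM--GM inequality. Write $g := \sqrt[n]{q_1 \cdots q_n}$ and, for a chosen pair $i \ne j$ in $[n]$, let $T_{ij}$ denote the operation on parameter vectors that replaces both $q_i$ and $q_j$ by their geometric mean $\sqrt{q_i q_j}$ while leaving the other coordinates unchanged. The key elementary observation is that $T_{ij}$ preserves the product $\prod_k q_k$, so it preserves $g$, and iterated applications can steer $(q_1,\ldots,q_n)$ arbitrarily close to the constant vector $(g,\ldots,g)$.

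First I would verify the two-variable base case directly: if $\Gamma$ uses parameters $(q_1, q_2)$ and $\Gamma'$ uses $(\sqrt{q_1 q_2}, \sqrt{q_1 q_2})$, then trivially $\PP(\vert\Gamma\vert \ge 0) = \PP(\vert\Gamma'\vert \ge 0) = 1$, while $\PP(\vert\Gamma\vert \ge 1) = 1 - q_1 q_2 = \PP(\vert\Gamma'\vert \ge 1)$ is preserved under $T_{12}$, and $\PP(\vert\Gamma\vert \ge 2) = (1-q_1)(1-q_2) = 1 - (q_1 + q_2) + q_1 q_2 \le 1 - 2\sqrt{q_1 q_2} + q_1 q_2 = (1-\sqrt{q_1 q_2})^2 = \PP(\vert\Gamma'\vert \ge 2)$ by AM--GM applied to $q_1, q_2$. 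Next I would promote this to general $n$ by conditioning: couple $\Gamma$ and $T_{ij}\Gamma$ using the same Bernoulli draws on every coordinate outside $\{i,j\}$, so that $\vert\Gamma\vert = S + X_i + X_j$ and $\vert T_{ij}\Gamma\vert = S + X_i' + X_j'$ for a common $S$. Averaging the two-variable inequality over the conditional law of $S$ yields $\vert\Gamma\vert \le \vert T_{ij}\Gamma\vert$ in first-order stochastic dominance.

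Finally I would iterate. In log-coordinates $r_k := \log q_k$ the operation $T_{ij}$ is arithmetic averaging of $(r_i, r_j)$, preserving $\sum_k r_k = n \log g$; choosing any ``fair'' cyclic scheme of pairwise averages (repeatedly picking the indices achieving $\max r_k - \min r_k$ works) produces a sequence of parameter vectors converging to $(g,\ldots,g)$. Each step weakens nothing and strengthens the dominance by the previous paragraph, and because $\vert\Gamma\vert$ takes only finitely many integer values, $\PP(\vert\Gamma\vert \ge k)$ is a polynomial in the Bernoulli parameters, so stochastic dominance passes to the limit and $\vert\Gamma\vert \le \vert\tilde\Gamma\vert$ stochastically. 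The main obstacle is organizational rather than analytical: arranging the coupling cleanly so that the two-variable AM--GM computation actually bootstraps to arbitrary $n$, and confirming that the iterated geometric-mean smoothing indeed converges to the target constant vector (which is clean in log-coordinates).
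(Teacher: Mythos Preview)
The paper does not actually prove this lemma: it is stated with a citation to \cite[Proposition~9.9(ii)]{vidmar-noise} and a \qedsymbol, so there is no in-paper argument to compare against. Your smoothing proof is correct and self-contained, and would serve as a genuine replacement for the bare citation.

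A couple of small points worth tightening. First, the log-coordinate description presumes $q_i>0$; if some $q_i=0$ then $g=0$, $\tilde\Gamma=[n]$ almost surely, and the dominance is trivial, so you may dispose of that case separately. Second, the convergence of the max--min averaging scheme deserves one line of justification: each $T_{ij}$ with $r_i=\max_k r_k$, $r_j=\min_k r_k$ reduces the variance $\sum_k (r_k-\bar r)^2$ by $\tfrac12(r_i-r_j)^2=\tfrac12(\max-\min)^2$, while the range $\max-\min$ is nonincreasing; since the variance is bounded below, the range must tend to $0$, whence the parameter vector converges to $(g,\ldots,g)$. With those two remarks added, the argument is complete: the two-variable AM--GM check, the coupling on the remaining coordinates (using that stochastic dominance is preserved under independent addition), transitivity along the smoothing chain, and continuity of the finitely many tail probabilities in the parameters together yield $\vert\Gamma\vert\le\vert\tilde\Gamma\vert$ in first-order stochastic dominance.
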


 \begin{proposition}\label{theorem:new-conditioon-for-classicality}
Let $\nu$ be a spectral independence probability. Then $\nu(K<\infty)=1$. 
\end{proposition}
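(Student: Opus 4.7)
The approach is to establish the integral bound $\nu[K] \leq -\log \nu(\{\emptyset_S\}) < \infty$, from which $\nu(K < \infty) = 1$ follows at once. The proof rests on a single multiplicative identity for the $\nu$-measures of the spectral sets $(S_{p'})_{p \in P}$ as $p$ ranges over an arbitrary partition of unity $P$ of $\BBB$.

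First, since $\land_{p \in P} p' = (\lor P)' = 1_\HH' = 0_\HH$, an induction on $|P|$ based on \eqref{spectral-sets:pi-system} yields $\cap_{p \in P} S_{p'} = \{\emptyset_S\}$ a.e.-$\mu$. Moreover, since $\pr_p^{-1}(\{\emptyset_S\}) = \pr_p^{-1}(S_{0_\HH}) = S_{0_\HH \lor p'} = S_{p'}$ a.e.-$\mu$, the event $S_{p'}$ is determined by $\pr_p$. The joint independence of $(\pr_p)_{p \in P}$ under $\nu$, remarked just after \eqref{projections-composition}, therefore makes $(S_{p'})_{p \in P}$ a $\nu$-independent family, and taking $\nu$-measure yields
\begin{equation*}
\nu(\{\emptyset_S\}) = \nu\bigl(\cap_{p \in P} S_{p'}\bigr) = \prod_{p \in P} \nu(S_{p'}),
\end{equation*}
with each factor $\geq \nu(\{\emptyset_S\}) > 0$, so all logarithms below are finite.

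Applying this with $P = \at(b)$ for arbitrary $b \in \mathfrak{F}$, taking logs, and using the elementary inequality $-\log x \geq 1 - x$ valid on $(0, 1]$, one obtains from \eqref{eq:Kb}
\begin{equation*}
\nu[K_b] = \sum_{p \in \at(b)} (1 - \nu(S_{p'})) \leq -\sum_{p \in \at(b)} \log \nu(S_{p'}) = -\log \nu(\{\emptyset_S\}) =: C.
\end{equation*}
Finally, for any exhausting sequence $(b_n)_{n \in \mathbb{N}}$ in $\mathfrak{F}$, $K_{b_n} \uparrow K$ a.e.-$\mu$, hence a.e.-$\nu$ since $\nu \ll \mu$, and monotone convergence gives $\nu[K] \leq C < \infty$, whence $\nu(K = \infty) = 0$.

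No serious obstacle presents itself: the only delicate point is identifying $S_{p'}$ as a pre-image under $\pr_p$, which reduces independence of the events $S_{p'}$ to the asserted joint independence of spectral projections. Lemma~\ref{proposition:random-set-domincance}, which one might have anticipated entering this proof by analogy with the first-order stochastic dominance arguments employed in the commutative landscape, turns out not to be needed --- the logarithmic bound $-\log x \geq 1 - x$ is already tight enough to dispense with it.
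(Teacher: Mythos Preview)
Your proof is correct and takes a genuinely different, more elementary route than the paper's. The paper proceeds by interpreting $K_{b_n}$ as the number of ``included'' vertices at level $n$ of a tree-indexed Bernoulli process, invokes Lemma~\ref{proposition:random-set-domincance} to stochastically dominate $K_{b_n}$ by a symmetric Bernoulli sum with the same product of exclusion probabilities, and then computes the explicit Poisson limit $\PP_k(\vert\Gamma\vert\leq m)\to p_0\sum_{l=0}^m\frac{(-\log p_0)^l}{l!}$ as $k\to\infty$. You bypass all of this: the same multiplicative identity $\prod_{p\in\at(b)}\nu(S_{p'})=\nu(\{\emptyset_S\})$ that the paper uses implicitly (it is what fixes the geometric mean in the dominance argument) is combined directly with $1-x\leq -\log x$ to bound $\nu[K_b]$, and monotone convergence finishes. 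Your argument in fact yields the stronger quantitative conclusion $\nu[K]\leq -\log\nu(\{\emptyset_S\})$, which is exactly the mean of the limiting Poisson distribution the paper's proof is converging to. What the paper's approach buys is the full stochastic domination of $K$ by a $\mathrm{Poisson}(-\log\nu(\{\emptyset_S\}))$ random variable, but for the stated proposition your first-moment bound is entirely sufficient and considerably cleaner.
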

\begin{proof}
The case of $\nu(\{\emptyset_S\})=1$, i.e. $\nu=\delta_{\emptyset_S}$ (in particular, the case when $0_\HH=1_\HH$) is trivial and excluded. 


Let $(b_n)_{n\in \mathbb{N}}$ be an exhausting sequence in $\mathfrak{F}$; in addition we put $b_0:=\{0_\HH,1_\HH\}$.


Set $T:=\cup_{n\in \mathbb{N}_0}\{n\}\times \at(b_n)$. The set $T$ is endowed  with a natural rooted tree structure: for $n\in \mathbb{N}_0$, $(n+1,b)$ is connected to $(n,a)$ iff $b\subset a$, no other connections; root $\{0\}\times \{1_\HH\}$. We define a $T$-indexed Bernoulli process $(\LL_t)_{t\in T}$ under the probability $\nu$ (in other words, a $\nu$-random subset of $T$): 
\begin{equation*}
\LL_{(n,a)}:=\mathbbm{1}_{\{\pr_a\ne \emptyset_S\}}\text{ for }(n,a)\in T.
\end{equation*}
 Then $\LL_{(0,1_\HH)}=\mathbbm{1}_{S\backslash \{\emptyset_S\}}$ a.e.-$\mu$; also, for each $(n,a)\in T$, $\mu$-a.e., 
 \begin{equation*}
 \{\LL_{(n,a)}=0\}=\{\pr_a=\emptyset_S\}=S_{a'}=\cap_{b\in \at(b_{n+1})\cap 2^a}S_{b'}=\cap_{b\in \at(b_{n+1})\cap 2^a}\{\pr_b=\emptyset_S\}=\cap_{b\in \at(b_{n+1})\cap 2^a}\{\LL_{(n+1,b)}=0\},
 \end{equation*} i.e.  $\mu$-a.e. a vertex of the tree is labeled zero by $\LL$ (is ``excluded''  from the random subset in question) iff all its descendants are. Besides, at each tree level $n\in \mathbb{N}_0$, the Bernoulli random variables $\mathbbm{1}_{(n,a)}$, $a\in\at(b_n)$, are independent under $\nu$ and the $\nu$-probability that all of them are zero at once is the constant $p_0:=\nu(\{\emptyset_S\})\in(0,1)$. It is helpful to think of $\LL$ as an exploration process: as $n$ increases we pierce with more and more precision into the structure of $\BBB$, at least as far as the counting map $K$ is concerned.

Now, for $n\in \mathbb{N}_0$, setting $\Gamma_n:=\{(n,a):a\in \at(b_n),\LL_{(n,a)}=1\}$, which is the random set of points of $T$ at level $n$ that are included by the exploration process $\LL$, we have
\begin{equation*}
\vert\Gamma_n\vert=\sum_{a\in \at(b_n)}\mathbbm{1}_{\{\pr_a\ne \emptyset_S\}}=\sum_{a\in \at(b_n)}\mathbbm{1}_{S\backslash S_{a'}}=K_{b_n}
\end{equation*}
a.e.-$\mu$. Therefore, to show that $\nu(K<\infty)=1$, by continuity of probability, it suffices (and is indeed equivalent) to prove that 
\begin{equation*}\lim_{m\to\infty}\lim_{n\to\infty}\nu(\vert\Gamma_n\vert\leq m)=1.
\end{equation*}
By Lemma~\ref{proposition:random-set-domincance} it reduces to establishing that 
$\lim_{m\to\infty}\lim_{k\to\infty}\PP_k(\vert\Gamma\vert\leq m)=1$, where, for $k\in \mathbb{N}$, under $\PP_k$, $\Gamma$ is a random subset of $[k]$ that excludes each point with probability $\sqrt[k]{p_0}$ independently of the others. But for $\{k,m\}\subset \mathbb{N}$, 
\begin{align*}
\PP_k(\vert\Gamma\vert\leq m)&=\sum_{l=0}^m{k\choose l}(1-\sqrt[k]{p_0})^l\sqrt[k]{p_0}^{k-l}=p_0\sum_{l=0}^m{k\choose l}(\sqrt[k]{p_0}^{-1}-1)^l\\
&=p_0\sum_{l=0}^m\frac{1}{l!}k(p_0^{-1/k}-1)\cdots (k-l+1)(p_0^{-1/k}-1)\xrightarrow[]{k\to\infty} p_0\sum_{l=0}^m\frac {(-\log p_0)^l}{l!}\xrightarrow[]{m\to\infty} p_0e^{-\log p_0}=1,
\end{align*}
(the limit ``$k\to\infty$''' uses e.g. l'H\^ospital's rule).
\end{proof}

\begin{corollary}\label{EF,C}
[\ref{classical:E} or \ref{classical:F}] implies \ref{classical:C}.
\end{corollary}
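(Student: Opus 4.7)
The plan is to read off \ref{classical:C} from each of \ref{classical:F} and \ref{classical:E} by a direct appeal to Proposition~\ref{theorem:new-conditioon-for-classicality}, which has already done the heavy lifting: every spectral independence probability $\nu$ satisfies $\nu(\{K=\infty\})=0$. In addition, the bridge to \ref{classical:E} is provided by Proposition~\ref{proposition:multiplicative-and-independence}, which turns multiplicative vectors into spectral independence probabilities.

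First the easy half. Under \ref{classical:F}, pick a spectral independence probability $\nu$ equivalent to $\mu$. Proposition~\ref{theorem:new-conditioon-for-classicality} gives $\nu(\{K=\infty\})=0$, and by the assumed equivalence $\nu\sim \mu$ this promotes to $\mu(\{K=\infty\})=0$, i.e.\ \ref{classical:C}.

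Now the implication from \ref{classical:E}. For each multiplicative vector $f$ the probability $\mu_{f/\Vert f\Vert}$ is a spectral independence probability by Proposition~\ref{proposition:multiplicative-and-independence}, hence Proposition~\ref{theorem:new-conditioon-for-classicality} yields $\mu_{f/\Vert f\Vert}(\{K=\infty\})=0$, equivalently $\mu_f(\{K=\infty\})=0$. Since by the very definition of $\mu_f$ we have
\begin{equation*}
\mu_f(\{K=\infty\})=\int_{\{K=\infty\}}\Vert \Psi(f)(s)\Vert^2\,\mu(\dd s)=\Vert \mathbbm{1}_{\{K=\infty\}}\Psi(f)\Vert^2,
\end{equation*}
this means $f$ is orthogonal to $\HHH(\{K=\infty\})$. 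By assumption the multiplicative vectors are total in $\HH$, so $\HHH(\{K=\infty\})=\{0\}$, i.e.\ $\int^\oplus_{\{K=\infty\}}\GG_s\mu(\dd s)=\{0\}$. As the fibres $\GG_s$ are non-zero a.e.-$\mu$, we conclude $\mu(\{K=\infty\})=0$, again giving \ref{classical:C}.

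There is no real obstacle: both implications are immediate consequences of Proposition~\ref{theorem:new-conditioon-for-classicality}, the only additional ingredient being the elementary fact that a closed subspace orthogonal to a total set is trivial. If anything merits care, it is only to record the identification of $\mu_f(\{K=\infty\})$ with the squared length of the $\HHH(\{K=\infty\})$-component of $f$, which is just a rewriting of the defining relation $\mu_f=\Vert \Psi(f)\Vert^2\cdot \mu$.
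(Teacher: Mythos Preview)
Your proof is correct and follows exactly the route the paper intends: both implications rest on Proposition~\ref{theorem:new-conditioon-for-classicality}, with Proposition~\ref{proposition:multiplicative-and-independence} supplying the bridge from multiplicative vectors to spectral independence probabilities. The paper's own proof is the single line ``Combine Propositions~\ref{proposition:multiplicative-and-independence} and~\ref{theorem:new-conditioon-for-classicality}'', and what you have written is precisely the intended unpacking of that sentence, including the observation that $\mu_f(\{K=\infty\})=0$ forces $f\perp \HHH(\{K=\infty\})$ and hence, by totality and a.e.\ non-triviality of the fibres, $\mu(\{K=\infty\})=0$.
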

\begin{proof}
Combine Propositions~\ref{proposition:multiplicative-and-independence} and~\ref{theorem:new-conditioon-for-classicality}.
\end{proof}

\subsection{A digression: additive vectors and blackness; the classical part}\label{subsection:blackness}
The present subsection is somewhat out of the main line of sight of our considerations. Still, the results to feature are at our fingertips and seem worth recording given their importance in the commutative setting.
 
 In analogy to the definition of a multiplicative vector (Definition~\ref{definition:multiplicative-vector}) we have

\begin{definition}
An additive vector is an element $\xi$ of $\HH$  such that for all $x\in \BBB$, $\xi=\phi_x(\xi)+\phi_{x'}(\xi)$.
\end{definition}
$0$ is always an additive vector. For any additive vector $\xi$ it is immediate from the definition  that $\langle \Omega,\xi\rangle=0$, also that $\phi_x(\xi)$ is an additive vector in turn for all $x\in \BBB$, finally that $\xi=\oplus_{p\in P}\phi_p(\xi)$ for all partitions of unity $P$ of $\BBB$.
\begin{example}\label{example:noise-Boolean-additive}
In the context of Example~\ref{example:noise-boolean} an additive vector of $(B^\uparrow,\mathbbm{1})$ is a square-integrable additive integral of $B$, i.e. a $\xi\in \LLL^2(\PP)$ such that $\xi=\PP[\xi\vert x]+\PP[\xi\vert x']$ for all $x\in B$. 
\end{example}

\begin{proposition}\label{proposition:first-chaos}
$\HHH(\{K=1\})$ comprises precisely the additive vectors.
\end{proposition}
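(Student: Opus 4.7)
I would prove the two inclusions separately, using only the spectral machinery of Section~\ref{section:spectrum} (especially the identities \eqref{K-local}, \eqref{projections-and-subspaces}) together with the local tensor decomposition \eqref{identification-another}.

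\textbf{The easy inclusion:} every $\xi\in\HHH(\{K=1\})$ is additive. Fix $x\in\BBB$ and apply \eqref{K-local} with $P=\{x,x'\}$: $K=K(\pr_x)+K(\pr_{x'})$ a.e.-$\mu$. Since $\{K=0\}=\{\emptyset_S\}$ a.e.-$\mu$ and $\pr_{x'}^{-1}(\{\emptyset_S\})=S_{0_\HH\lor x}=S_x$ a.e.-$\mu$ (by the defining property of the spectral projection), the set $\{K=1\}$ splits a.e.-$\mu$ into the disjoint union of $\{K=1\}\cap S_x$ and $\{K=1\}\cap S_{x'}$ (disjointness comes from $S_x\cap S_{x'}=S_{0_\HH}=\{\emptyset_S\}$ a.e.-$\mu$, which lies in $\{K=0\}$). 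Since $\Psi(\xi)$ is supported on $\{K=1\}$, multiplying by $\mathbbm{1}_{S_x}$ and $\mathbbm{1}_{S_{x'}}$ and adding gives $\Psi(\xi)=\Psi(\phi_x\xi)+\Psi(\phi_{x'}\xi)$; hence $\xi=\phi_x(\xi)+\phi_{x'}(\xi)$.

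\textbf{The other inclusion:} additive vectors live in $\HHH(\{K=1\})$. Let $\xi$ be additive. From $\langle\Omega,\xi\rangle=0$ and $\HHH(\{\emptyset_S\})=\mathbb{C}\Omega$ we have $\Psi(\xi)(\emptyset_S)=0$. Since $\{K\geq 2\}=\bigcup_{b\in\mathfrak{F}}\{K_b\geq 2\}$ a.e.-$\mu$ by \eqref{xK-limit} and the upward-directedness of $(K_b)_{b\in\mathfrak{F}}$, it suffices to show $\Psi(\xi)=0$ a.e.-$\mu$ on $\{K_b\geq 2\}$ for each $b\in\mathfrak{F}$. Fix $b$, set $P:=\at(b)$. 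The key step is that the additivity hypothesis propagates to partitions: by induction on $|P|$, using that $\phi_{p'}\phi_y=\phi_{y\land p'}$ so that $\phi_{p'}(\xi)$ is additive for the restricted unital factorization $(\BBB_{p'}\vert_{\HH_{p'}},\Omega)$ of Proposition~\ref{proposition:restriction-of-noise-factorization}, one gets $\xi=\oplus_{p\in P}\phi_p(\xi)$. Under the identification \eqref{identification-another} this reads
\begin{equation*}
\xi=\sum_{p\in P}\Omega\otimes\cdots\otimes\underbrace{\phi_p(\xi)}_{\text{slot }p}\otimes\cdots\otimes\Omega,
\end{equation*}
so $\xi$ is orthogonal to every ``higher-chaos'' summand $\otimes_{p\in Q}(\HH_p\ominus\mathbb{C}\Omega)\otimes\bigotimes_{p\in P\setminus Q}\mathbb{C}\Omega$ with $|Q|\geq 2$. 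By \eqref{projections-and-subspaces} the latter coincides with $\HHH\bigl(\bigcap_{p\in Q}\pr_p^{-1}(S\setminus\{\emptyset_S\})\cap\bigcap_{p\in P\setminus Q}\pr_p^{-1}(\{\emptyset_S\})\bigr)$. Observing via \eqref{K-local} (or \eqref{eq:ad-def-K}) that $\{K_b=k\}$ is, up to $\mu$-null sets, the disjoint union over $Q\in\binom{P}{k}$ of precisely these sets, we conclude $\Psi(\xi)=0$ a.e.-$\mu$ on $\{K_b\geq 2\}$.

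\textbf{Anticipated obstacle.} The only non-mechanical point is verifying that additivity descends under the restriction homomorphism of Proposition~\ref{proposition:restriction-of-noise-factorization}, which bootstraps the two-piece additivity relation to an $n$-piece orthogonal decomposition along an arbitrary partition of unity; once that is in place, the structural identity \eqref{projections-and-subspaces} does all the work of pairing spectral sets with tensor factors, and the rest is bookkeeping against $\{K_b=k\}$ and a pass to the $\esssup$.
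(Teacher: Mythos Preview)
Your proposal is correct. For the easy inclusion the paper simply cites Proposition~\ref{prop:Exp}\ref{prop:Exp-1}, whose proof is essentially the argument you give. For the harder inclusion the paper also starts from the partition relation $\xi=\oplus_{p\in\at(b)}\phi_p(\xi)$ (already recorded just before the proposition) but then takes a shorter path: rather than invoking the tensor chaos decomposition and \eqref{projections-and-subspaces}, it just applies $\Psi$ to obtain $\Psi(\xi)=\bigl(\sum_{p\in\at(b)}\mathbbm{1}_{S_p}\bigr)\Psi(\xi)$, whence $\sum_{p}\mathbbm{1}_{S_p}=1$ on $\{\Psi(\xi)\neq0\}$. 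Since for $K_b(s)\geq 2$ the minimal $\underline{b}(s)$ is contained in no single atom, the sum vanishes there; thus $K_b\leq 1$ on $\{\Psi(\xi)\neq0\}$ for every $b\in\mathfrak{F}$, giving $K=1$ on $\{\Psi(\xi)\neq0\}$ via an exhausting sequence. Your route via orthogonality to the higher-chaos pieces $\otimes_{p\in Q}(\HH_p\ominus\mathbb{C}\Omega)$ is a valid alternative that reaches the same conclusion one $b$ at a time, but uses more machinery (namely \eqref{projections-and-subspaces}) where the one-line spectral identity $\Psi(\phi_p\xi)=\mathbbm{1}_{S_p}\Psi(\xi)$ already suffices.
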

\begin{proof}
Let $\xi$ be an additive vector. Then for all $b\in \mathfrak{F}$, $\Psi(\xi)=(\sum_{x\in \at(b)}\mathbbm{1}_{S_x})\Psi(\xi)$ a.e.-$\mu$, i.e. $1=\sum_{x\in \at(b)}\mathbbm{1}_{S_x}$ a.e.-$\mu$ on $\{\Psi(\xi)\ne 0\}$ for all $x\in \BBB$. Taking an exhausting sequence $(b_n)_{n\in \mathbb{N}}$ in $\mathfrak{F}$ we have $1=\sum_{x\in \at(b_n)}\mathbbm{1}_{S_x}$ simultaneously for all $n\in \mathbb{N}$ a.e.-$\mu$ on  $\{\Psi(\xi)\ne 0\}$, which means that $K=1$ a.e.-$\mu$ on  $\{\Psi(\xi)\ne 0\}$, yielding $\xi\in \HHH(\{K=1\})$. The converse appeared already in Proposition~\ref{prop:Exp}\ref{prop:Exp-1}. 
\end{proof}

The theorem to feature next concerns the situation that is as singular to the classical one described in Theorem~\ref{thm:main-for-noise-factorization} as can be.  It is only interesting when $\HH$ is not one-dimensional, i.e. $0_\HH\ne 1_\HH$ (otherwise all its statements hold true trivially). 
 \begin{theorem}\label{thm:black}
The following are equivalent. 
 \begin{enumerate}[(I)]
 \item\label{black:A} $\Omega$ is the only multiplicative vector.
 \item\label{black:B} $0$ is the only additive vector.
 \item\label{black:C} $\delta_{\emptyset_S}$ is the only spectral independence probability.
 \item\label{black:D} $K=\infty$ a.e.-$\mu$ off $\{\emptyset_S\}$. 
 \end{enumerate}
 \end{theorem}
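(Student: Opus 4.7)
The plan is to close the cycle \ref{black:A}$\Rightarrow$\ref{black:B}$\Rightarrow$\ref{black:D}$\Rightarrow$\ref{black:C}$\Rightarrow$\ref{black:A}, each implication being a short deduction from machinery already assembled.

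First I would prove \ref{black:A}$\Rightarrow$\ref{black:B}. For an additive vector $g$, Proposition~\ref{proposition:first-chaos} places $g$ in $\HHH(\{K=1\})$, so $\Exp(\epsilon g)$ is defined and is a multiplicative vector for every $\epsilon\in \mathbb{C}$ by Proposition~\ref{prop:Exp}\ref{prop:Exp-3}. Under \ref{black:A} this forces $\Exp(\epsilon g)=\Omega$ for all $\epsilon$; the derivative computation carried out inside the proof of Proposition~\ref{prop:Exp}\ref{prop:Exp-5} then yields $g=\lim_{\epsilon\downarrow 0}\epsilon^{-1}(\Exp(\epsilon g)-\Omega)=0$.

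Next \ref{black:B}$\Rightarrow$\ref{black:D}: Proposition~\ref{proposition:first-chaos} recasts \ref{black:B} as $\HHH(\{K=1\})=\{0\}$, whence $\{K=1\}$ is $\mu$-null. Proposition~\ref{prop:Exp}\ref{prop:Exp-5} now renders $\{\Omega\}=\{\Exp(0)\}$ total in $\HHH(\{K<\infty\})$, forcing $\HHH(\{K<\infty\})=\mathbb{C}\Omega=\HHH(\{\emptyset_S\})$. Because the map $A\mapsto \HHH(A)$ is injective modulo $\mu$-null sets (the fibres $\GG_s$ being a.e.-$\mu$ non-zero), we conclude $\{K<\infty\}=\{\emptyset_S\}$ a.e.-$\mu$, which is \ref{black:D}.

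For \ref{black:D}$\Rightarrow$\ref{black:C}, any spectral independence probability $\nu$ is absolutely continuous w.r.t. $\mu$ and satisfies $\nu(K<\infty)=1$ by Proposition~\ref{theorem:new-conditioon-for-classicality}, so \ref{black:D} forces $\nu$ to be concentrated on the singleton $\{\emptyset_S\}$; being a probability, $\nu=\delta_{\emptyset_S}$. Finally \ref{black:C}$\Rightarrow$\ref{black:A}: for a multiplicative vector $f$, Proposition~\ref{proposition:multiplicative-and-independence} exhibits $\mu_{f/\Vert f\Vert}$ as a spectral independence probability, which by \ref{black:C} equals $\delta_{\emptyset_S}$; thus $\Psi(f/\Vert f\Vert)$ is supported on $\{\emptyset_S\}$, i.e.\ $f\in \HHH(\{\emptyset_S\})=\mathbb{C}\Omega$, and the normalization $\langle \Omega,f\rangle=1$ yields $f=\Omega$.

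The only mildly delicate step is \ref{black:B}$\Rightarrow$\ref{black:D}, where the totality assertion of Proposition~\ref{prop:Exp}\ref{prop:Exp-5} is essential to promote vanishing of $\HHH(\{K=1\})$ into vanishing of $\HHH(\{K<\infty\})\ominus\mathbb{C}\Omega$; the remaining three implications are essentially one-line corollaries of results already in place.
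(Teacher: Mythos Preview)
Your proof is correct and uses the same ingredients as the paper; the only difference is cosmetic routing. The paper closes the cycle \ref{black:A}$\Rightarrow$\ref{black:D}$\Rightarrow$\ref{black:C}$\Rightarrow$\ref{black:A} and handles \ref{black:B}$\Leftrightarrow$\ref{black:D} separately (citing Proposition~\ref{proposition:products-total} rather than Proposition~\ref{prop:Exp}\ref{prop:Exp-5} for \ref{black:B}$\Rightarrow$\ref{black:D}), whereas you thread \ref{black:B} into the main cycle via the derivative argument for \ref{black:A}$\Rightarrow$\ref{black:B}; since Proposition~\ref{prop:Exp}\ref{prop:Exp-5} is itself derived from Proposition~\ref{proposition:products-total}, the two arguments are substantively identical.
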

 \begin{proof}
\ref{black:A} $\Rightarrow$ \ref{black:D}: Proposition~\ref{prop:Exp}, Items~\ref{prop:Exp-3} and~\ref{prop:Exp-5}. \ref{black:D} $\Rightarrow$ \ref{black:C}: Proposition~\ref{theorem:new-conditioon-for-classicality}. \ref{black:C} $\Rightarrow$ \ref{black:A}:   Proposition~\ref{proposition:multiplicative-and-independence}.  \ref{black:D} $\Rightarrow$ \ref{black:B}:   Proposition~\ref{proposition:first-chaos}. \ref{black:B} $\Rightarrow$ \ref{black:D}: Propositions~\ref{proposition:products-total} and~\ref{proposition:first-chaos}. 
 \end{proof}
 \begin{definition}
 $(\BBB,\Omega)$ is black if it meets one and then all of the conditions of Theorem~\ref{thm:black} but $0_\HH\ne 1_\HH$.
 \end{definition}
 Compare with Definition~\ref{definition:classical}.  Both are in line with the usage of these concepts for noise Boolean algebras \cite[Definition~1.3]{tsirelson} (due to \cite[Theorem~7.7]{tsirelson} in the classical case). We dwell for a while on this ``$\sigma$-field'' setting in

\begin{extendedremark}\label{extendedremark:noise-Boolean}
Let $\PP$ be an essentially separable probability, i.e. such that $\LLL^2(\PP)$ is separable. Recalling the content of Examples~\ref{example:noise-boolean},~\ref{example:noise-Boolean-bis},~\ref{example:noise-boolean-contd} and~\ref{example:noise-Boolean-additive}, let $\HH=\LLL^2(\PP)$ and $(\BBB,\Omega)=(B^\uparrow,\mathbbm{1})$. 

The following are equivalent (to the classicality of $B$ \cite[Definition~1.3(a)]{tsirelson}). 
\begin{enumerate}[(A')]
\item\label{classical:a} There is a complete (:= closed for arbitrary joins and meets) noise Boolean algebra of $\PP$ containing $B$. 
\item\label{classical:b}  For every sequence $(x_n)_{n\in \mathbb{N}}$ in $B$, $(\lor_{n\in \mathbb{N}}x_n)\lor (\land_{n\in \mathbb{N}}x_n')=1_\PP$.
\item\label{classical:c} $K<\infty$ a.e.-$\mu$.
\item\label{classical:d} $(B^\uparrow,\mathbbm{1})$ is of Fock type.
\item\label{classical:e} The square-integrable  multiplicative integrals of $B$, together with $0_\PP$, generate $1_\PP$.
\item\label{classical:f} There exists a spectral independence probability equivalent to $\mu$.
\item\label{classical:g} The square-integrable additive integrals of $B$,  together with $0_\PP$, generate $1_\PP$.
\end{enumerate}
\begin{proof}
The equivalence of \ref{classical:a}-\ref{classical:b}-\ref{classical:c}-\ref{classical:g} was established in \cite[Theorems~1.5 and~7.7]{tsirelson} (modulo a small detail overlooked in the proof of \cite[Corollary~4.7]{tsirelson}, which is however easily filled in \cite[Remark~4.10]{vidmar-noise}), while the equivalence of \ref{classical:c}-\ref{classical:d}-\ref{classical:f} is just a specialization of Theorem~\ref{thm:main-for-noise-factorization}. It remains to check e.g. that \ref{classical:e} is equiveridical with \ref{classical:g}, which will certainly follow if  it can be argued that 
\begin{align*}
\sigma_{\mathrm{add}}(B)&:=\sigma(\{\xi\in \LLL^2(\PP):\xi\text{ an additive integral of }B\})\lor 0_\PP\\
&=\sigma(\{\xi\in \LLL^2(\PP):\xi\text{ a multiplicative integral of }B\})\lor 0_\PP=:\sigma_{\mathrm{mult}}(B).
\end{align*}
But if $\xi$ is a square-integrable additive integral of $B$, which thanks to Proposition~\ref{proposition:first-chaos} is equivalent to $\xi\in \HHH(\{K=1\})$, then $\xi=\lim_{\epsilon\downarrow 0}\frac{\Exp(\epsilon \xi)-1}{\epsilon}$ (see proof of  Proposition~\ref{prop:Exp}\ref{prop:Exp-5}); combined with Proposition~\ref{prop:Exp}\ref{prop:Exp-3} we get $\sigma_{\mathrm{add}}(B)\subset\sigma_{\mathrm{mult}}(B)$. For the converse it suffices to verify that ($\star$) there is a (automatically then unique) complete sub-$\sigma$-field $\GG$ of $1_\PP$ for which $\HHH(\{K<\infty\})=\LLL^2(\PP\vert_\GG)$. For  Propositions~\ref{proposition:multiplicative-and-independence} and~\ref{theorem:new-conditioon-for-classicality} will then render $\sigma_{\mathrm{mult}}(B)\subset \GG$, while Proposition~\ref{proposition:products-total} ensures that $\GG\subset \sigma_{\mathrm{add}}(B)$; altogether it will deliver
\begin{equation*}
\sigma_{\mathrm{stb}}(B):=\sigma_{\mathrm{add}}(B)=\sigma_{\mathrm{mult}}(B)\text{ and }\HHH(\{K<\infty\})=\LLL^2(\PP\vert_{\sigma_{\mathrm{stb}}(B)}).
\end{equation*}
Now, in order to establish $(\star)$ we need \cite[Fact~2.1]{tsirelson} only show that $\HHH(\{K<\infty\})$ is closed under applications of the absolute value. Take then $f\in \HHH(\{K<\infty\})$. With $(b_n)_{n\in \mathbb{N}}$ an exhausting sequence, $\mathbbm{1}_{\{K<\infty\}}=\lim_{\rho\uparrow 1}\lim_{n\to\infty}\rho^{K_{b_n}}=\lim_{\rho\uparrow 1}\lim_{n\to\infty}\prod_{a\in \at(b_n)}((1-\rho)\mathbbm{1}_{S_{a'}}+\rho)$, which shows that $\mathbbm{1}_{\{K<\infty\}}$ belongs to the smallest convex subset $\mathcal{S}$ of $\LLL^0(\mu)$ containing all the $\mathbbm{1}_{S_x}$, $x\in B$, and closed under pointwise a.e.-$\mu$ limits, such $\mathcal{S}$ being then automatically closed under multiplication thanks to \eqref{spectral-sets:pi-system}. But for $x\in B$, $\mu_{\vert f\vert}(S\backslash S_x)=\PP[f^2]-\PP[\PP[f\vert x]^2]=\PP[\mathrm{var}(\vert f\vert \vert x)]\leq \PP[\mathrm{var}( f\vert x)]=\mu_f(S\backslash S_x)$ ($\because$ the contraction $\vert\cdot\vert$ reduces  [conditional] variance); by linearity and bounded convergence we see that $\mu_{\vert f\vert}[1-\phi]\leq \mu_f[1-\phi]$ for all $\phi\in \mathcal{S}$, and  in particular with $\phi=\mathbbm{1}_{\{K<\infty\}}$ we  get $\mu_{\vert f\vert}(K=\infty)\leq \mu_f(K=\infty)=0$, i.e. $\vert f\vert \in \HHH(\{K<\infty\})$.
\end{proof}
We see that \ref{classical:a}-\ref{classical:f} are exactly parallel to \ref{classical:A}-\ref{classical:F} of Theorem~\ref{thm:main-for-noise-factorization}, while \ref{classical:g} is more specific to $\sigma$-fields.  
\end{extendedremark}
Returning now to the general setting,  \eqref{projections-and-subspaces} with $E_p=\{K<\infty\}$ for all $p\in P$ and \eqref{K-local} yield
\begin{equation*}
\HH^\star:=\HHH(\{K<\infty\})=\otimes_{p\in P}(\HH^\star\cap \HH_p)
\end{equation*}
for all partitions of unity $P$ of $\BBB$; in particular, for all $x\in \BBB$, $\HH^\star=(\HH^\star\cap \HH_x)\otimes (\HH^\star\cap \HH_{x'})$. This means that the map 
\begin{equation*}
\BBB\ni x\mapsto x^\star:=1_{\HH^\star\cap \HH_x}\otimes 0_{\HH^\star\cap \HH_{x'}}\in \widehat{\HH^\star}
\end{equation*} carries $\BBB$ onto a type $\I$ factorization $\BBB^\star$ of $\HH^\star$ as a homomorphism of Boolean algebras. Moreover, $(\BBB^\star,\Omega)$ is a unital factorization, and, because $\{K<\infty\}\in \Sigma$, the pair $(\mu^\star,\Psi^\star):=(\mu\vert_{\{K<\infty\}},\Psi\vert_{\HH^\star}\vert_{\{K<\infty\}})$ is a spectral resolution thereof with spectral sets $S_{x^\star}^\star=S_x\cap \{K<\infty\}$, $x\in \BBB$, and (hence) counting map $K^\star=K\vert_{\{K<\infty\}}<\infty$ a.e.-$\mu^\star$. It follows that $(\BBB^\star,\Omega)$ is a classical unital factorization, equal to  $(\BBB,\Omega)$ or $(\{0_\HH,1_\HH\},\Omega)$ according as to whether $(\BBB,\Omega)$ is classical or black. In the setting of Extended remark~\ref{extendedremark:noise-Boolean} it corresponds to 
\begin{equation*}
B\ni x\mapsto x\land \sigma_{\mathrm{stb}}(B)\in \reallywidehat{\PP\vert_{\sigma_{\mathrm{stb}}(B)}}
\end{equation*} mapping $B$ onto a classical noise Boolean algebra $B_{\mathrm{stb}}$ of $\PP\vert_{\sigma_{\mathrm{stb}}(B)}$ (again as a homomorphism of Boolean algebras) --- $B_{\mathrm{stb}}$ is the so-called classical or stable part of $B$. For more on the latter see \cite[Section~7]{vidmar-noise} and the references therein.
%

\section{Complete type $\mathrm{I}$ factorizations}\label{section:factorizations-sans}
In this section 
\begin{equation*}
\text{$\KK$ is a  non-zero Hilbert space and $\FFF$ is a type $\mathrm{I}$ factorization of $\KK$.}
\end{equation*}

 Proposition~\ref{thm:a-facorizable-vector} to feature presently establishes that any complete type $\I$ factorization acting on a non-zero separable Hilbert space  can in fact be enhanced into a unital factorization. Before proceeding to this let us recall some notions and facts from \cite{araki-woods} that we shall avail ourselves of in the proof of Proposition~\ref{thm:a-facorizable-vector}. 
 
First  is the deep structural result of \cite[Theorem~4.1]{araki-woods} stating that a complete atomic type $\I$ factorization $\FFF$ is isomorphic to a factorization of the form $\prod(\otimes_{\beta\in \mathfrak{B}}\JJ_\beta^{e_\beta})$ of Example~\ref{example:discrete-factorization} with $\mathfrak{B}$ the collection of the atoms of $\FFF$, and it is so via  a unitary isomorphism that sends an atom $a$ of $\FFF$ onto $F_{\{a\}}:=$ (the ``copy'' of $1_{\JJ_a}$ in $\otimes_{\beta\in \mathfrak{B}}\JJ_\beta$) $:=1_{\JJ_a}\otimes 0_{\JJ_{\mathfrak{B}\backslash \{a\}}}$ (up to the natural unitary equivalence), the minimal non-zero projections of $a$ corresponding to copies of the one-dimensional projections of $\JJ_a$. In particular,  a finite type $\I$ factorization is always isomorphic to one of those given by Example~\ref{example:discrete-factorization} with $\mathfrak{B}$ finite \cite[Section~2]{araki-woods}, the presence of the stabilising family being then superfluous.   Compare this with $(\otimes)$ of the second paragraph of Subsection~\ref{subsection:factorizable}: it corresponds to $\FFF=\{0_\KK,x,x',1_\KK\}$ for a type $\I$ factor $x$ of $\KK$.

For  $\psi\in \KK$ we set next \cite[Definition~6.1 and Eq.~(6.1)]{araki-woods}
\begin{equation*}
\dd(\psi;\FFF):=\inf_P\sup_Q\left\langle \psi,\prod_{f\in P}Q_f\psi\right\rangle,
\end{equation*}
where $P$ runs over all  partitions of unity of (the Boolean algebra) $\FFF$ and then $Q$ over all choice functions on $P$ having as $Q(p)$ a minimal non-zero projection of $p$ for all $p\in P$. Notice that $\dd(\psi;\FFF)\leq \Vert\psi\Vert^2$ for all $\psi\in\KK$; and that if $\psi$ is a norm one factorizable vector of $\FFF$, then thanks to Corollary~\ref{corollary:factorizable-and-partition-of-unity} $\dd(\psi;\FFF)=1$. 

A partition of $\FFF$ is defined naturally: it is a subset $P$ of $\FFF\backslash \{0_\KK\}$ such that $\lor P=1_\KK$ and such that $x\land y=0_\KK$ for $x\ne y$ from $\FFF$. Thus a finite partition of $\FFF$ is nothing but a partition of unity of (the Boolean algebra) $\FFF$. However,  partitions of $\FFF$ can be infinite. 

Assume now until the end of this paragraph that $\FFF$ is complete nonatomic. Let $P$ be a  partition of $\FFF$. For a family of Hilbert spaces $(\KK_p)_{p\in P}$ of dimension at least two and  relative to a stabilising family $\zeta=(\zeta_p)_{p\in P}$ of norm one vectors, up to a unitary isomorphism $\Theta_P$, we have that $\KK=\otimes_{p\in P}\KK_p$ turning each $p\in P$ into the copy of $1_{\KK_p}$ in $\otimes_{p\in P}\KK_p$ and transforming $\FFF\cap 2^p$ into a complete nonatomic type $\I$ factorization  of $\KK_p$ in the natural way, namely into
\begin{equation}\label{naturally-transformed}
\FFF_{p}:=\{\{X\in 1_{\KK_p}:X\otimes \mathbf{1}_{\otimes_{q\in P\backslash \{p\}}\KK_q}\in x\}:x\in \FFF\cap 2^{p}\}.
\end{equation}
By \cite[Lemma~6.1]{araki-woods} applied with $\Psi_p=\zeta_p$ for $p\in P$  (here $\Psi_p$ is the notation of \cite[Lemma~6.1]{araki-woods}),
\begin{equation}\label{adiuvat:one}
\text{if $P$ is countable then }\sum_{p\in P}(1-\dd(\zeta_p;\FFF_p))<\infty,\text{ in particular }\dd(\zeta_p;\FFF_p)>0 \text{ for all but finitely many }p\in P.
\end{equation}
(The assumption $\otimes_{p\in P} \Psi_p\ne 0$ is missing (but  needed) in \cite[Lemma~6.1]{araki-woods}, however it is a trivial omission/it is implicitly understood from context.) 

Another observation of \cite{araki-woods} we formulate, again under the assumption that $\FFF$ is complete nonatomic, as follows: for a norm one $\psi\in \KK$,
\begin{equation}\label{adiuvat:two}
\text{if $\dd(\psi;\FFF)>0$ then $\forall\delta\in (0,1)$  $\exists$ a factorizable vector $\phi$ of $\FFF$ satisfying $\langle \psi,\phi\rangle \geq \delta \dd(\psi;\FFF)$ and $\Vert \phi\Vert\leq 1$}.
\end{equation}
This follows from the \emph{proof} of \cite[Lemma~6.3]{araki-woods} after trivial modifications (namely, in \cite[Eq.~(6.3)]{araki-woods} one can ask for, in the notation thereof, ``$>\delta\dd(\Psi;B)$'' rather than ``$>\frac{1}{2}\dd(\Psi;B)$''). (A consequence of this is the \emph{statement} of \cite[Lemma~6.3]{araki-woods} asserting, in our notation, that if  $\dd(\psi;\FFF)>0$ for all $\psi\in \KK\backslash \{0\}$, then the factorizable vectors are total in $\KK$. Incidentally, the ``$\backslash\{0\}$'' is missing (but should be present) in \cite[Lemma~6.3]{araki-woods}, however again this is a trivial omission/is implicit from context.)

We are  ready to establish
\begin{proposition}\label{thm:a-facorizable-vector}
Suppose $\KK$ is separable. If $\FFF$ is complete  then it admits a  factorizable vector.
\end{proposition}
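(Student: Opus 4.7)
The plan is to decompose $\FFF$ into its atomic and nonatomic components, treat each separately, then tensor. By completeness, $a^\star := \lor \at(\FFF) \in \FFF$; applying $(\otimes)$ to the type $\I$ factor $a^\star$ identifies $\KK = \GG \otimes \GG'$ with $a^\star$ corresponding to $1_\GG \otimes 0_{\GG'}$. Distributivity in $\FFF$ gives $x = (x \land a^\star) \lor (x \land (a^\star)')$ for every $x \in \FFF$, exhibiting $\FFF$ as a product of a complete atomic type $\I$ factorization $\FFF_1$ on $\GG$ and a complete nonatomic one $\FFF_2$ on $\GG'$, both on separable spaces. Via Proposition~\ref{proposition:factorizable-myriad}\ref{proposition:factorizable-myriad:iv} applied piece by piece, a tensor product of factorizable vectors for $\FFF_1$ and $\FFF_2$ is factorizable for $\FFF$, so it suffices to treat the two pure cases.

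The atomic case follows immediately from the Araki--Woods structure theorem \cite[Theorem~4.1]{araki-woods}: $\FFF_1$ is unitarily isomorphic to a factorization of the form $\prod(\otimes_\beta \JJ_\beta^{e_\beta})$ of Example~\ref{example:discrete-factorization} indexed by $\at(\FFF_1)$, and the stabilising vector $\otimes_\beta e_\beta$ (the unit of the corresponding unital factorization) pulls back to a factorizable vector.

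For the nonatomic case, I would exhibit a norm-one $\psi \in \KK$ with $\dd(\psi; \FFF) > 0$ and then invoke \eqref{adiuvat:two}. Separability of $\KK$ combined with Proposition~\ref{proposition:orthogonal} gives the countable chain condition in $\FFF$, and Zorn together with nonatomicity yields a countably infinite partition $P$ of $\FFF$. Identify $\KK = \otimes_{p \in P} \KK_p$ with a stabilising family $\zeta = (\zeta_p)$ so that each $p$ is the copy of $1_{\KK_p}$; by \eqref{naturally-transformed} each $\FFF_p$ is itself a complete nonatomic type $\I$ factorization of $\KK_p$, and \eqref{adiuvat:one} delivers $\sum_p (1 - \dd(\zeta_p; \FFF_p)) < \infty$. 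The candidate is $\psi := \otimes_p \zeta_p$, and the target inequality $\dd(\psi; \FFF) \geq \prod_p \dd(\zeta_p; \FFF_p)$ should follow from the observation that any partition of unity of $\FFF$ admits a common refinement with $P$ compatible with the tensor structure, across which the supremum over choice functions of minimal projections factors multiplicatively.

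The main obstacle is ensuring that the product $\prod_p \dd(\zeta_p; \FFF_p)$ is strictly positive, as \eqref{adiuvat:one} only rules out finitely many zero factors. My plan here is to iteratively refine $P$: each bad $p$ (with $\dd(\zeta_p; \FFF_p) = 0$) is further partitioned within $\FFF_p$ via a new local tensor decomposition of $\KK_p$, producing a refined global decomposition with an adjusted stabilising family; at each round the bad pieces again form a finite set by \eqref{adiuvat:one} applied inside $\FFF_p$. A careful transfinite or limiting argument, in the spirit of the induction underlying Araki--Woods' proof of their Theorem~6.1, should yield a partition in which all $\dd$ factors are positive, whence $\dd(\psi; \FFF) > 0$. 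Finally \eqref{adiuvat:two} applied with $\delta = 1/2$ delivers a factorizable vector $\phi$ of $\FFF$ satisfying $\langle \psi, \phi \rangle \geq (1/2)\dd(\psi; \FFF) > 0$, and in particular $\phi \neq 0$, completing the proof.
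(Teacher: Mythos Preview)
Your reduction to the nonatomic case via the atomic/nonatomic split and the Araki--Woods structure theorem matches the paper and is fine. The gap is entirely in the nonatomic case.

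The iterative refinement you propose does not terminate in any evident way. After one step each bad piece $p$ (with $\dd(\zeta_p;\FFF_p)=0$) is replaced by countably many sub-pieces, of which finitely many are again bad by \eqref{adiuvat:one}; but the total number of bad pieces can increase, and there is no decreasing ordinal or well-founded structure to force stabilisation. Invoking ``the induction underlying Araki--Woods' Theorem~6.1'' does not help: that theorem \emph{assumes} totality of factorizable vectors and proves Fock structure, it does not manufacture a factorizable vector. A limit of your process would require handling a limiting partition together with compatible limiting stabilising data, which is precisely the difficulty. A secondary issue: the inequality $\dd(\otimes_p\zeta_p;\FFF)\geq\prod_p\dd(\zeta_p;\FFF_p)$ you invoke is not obvious and not proved, since the partitions of unity in the definition of $\dd$ are finite while $P$ is infinite, so there is no common finite refinement through which the supremum over choice functions would factor.

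The paper circumvents both problems. It applies Zorn not to partitions with controlled $\dd$-values but to \emph{factorizing} partial partitions: those each of whose pieces $q$ already carries a factorizable vector of $\FFF_q$. Maximality together with \eqref{adiuvat:one}--\eqref{adiuvat:two} forces a maximal such $P$ to satisfy $\lor P=1_\KK$, since otherwise $(\lor P)'$ could be countably partitioned and cofinitely many of its pieces would have factorizable vectors, extending $P$. The Zorn step thus absorbs in one stroke the transfinite iteration you attempt by hand. There is then a second step your outline lacks: having local factorizable vectors $\chi_p$ is not enough, because $\otimes_p\chi_p$ need not converge in the incomplete tensor product relative to $\zeta$. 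The paper uses \eqref{adiuvat:two} with carefully chosen $\delta_p\uparrow 1$ on the cofinite set $\{p:\dd(\zeta_p;\FFF_p)>0\}$ to replace $\chi_p$ by factorizable $\chi'_p$ close enough to $\zeta_p$ that $\sum_p\lvert 1-\langle\zeta_p,\chi'_p\rangle\rvert<\infty$, whence $\omega:=\otimes_p\chi'_p$ exists and is nonzero; its factorizability is then verified directly via strong limits of projections.
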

Completeness is essential: in \cite[Example on p.~91]{vershik-tsirelson} is described  a type $\I$ factorization on a non-zero separable Hilbert space having no  factorizable vectors, which therefore  cannot be upgraded to a unital factorization and cannot be complete. On the other hand, whether or not one can do away with the assumption of separability of the underlying Hilbert space, while still retaining the validity of Proposition~\ref{thm:a-facorizable-vector}, and more generally of the results of this paper (perhaps properly reinterpreted), is a question that lies  beyond the scope of this investigation (the author is not aware of any counterexample, nor would he feel confident making a conjecture either way).
\begin{proof}
Consider $x_0:=\lor\at(\FFF)\in \FFF$, the join of all the atoms of $\FFF$, which belongs to $\FFF$ by completeness. Up to a unitary isomorphism $\Delta_{x_0}$ we have a decomposition $\KK=\KK_0\otimes \KK_0'$ into the tensor product of two non-zero Hilbert spaces such that $x_0=1_{\KK_0}\otimes 0_{\KK_0'}$.

The unitary isomorphism $\Delta_{x_0}$ transforms $\FFF\cap 2^{x_0}$ into a complete atomic type $\I$ factorization $\FFF_{x_0}$ of $\KK_0$ in the natural way, $\FFF_{x_0}=\{\{X_0\in 1_{\KK_0}:X_0\otimes \mathbf{1}_{\KK_0'}\in x\}:x\in \FFF\cap 2^{x_0}\}$.  We know $\FFF_{x_0}$ is isomorphic to a discrete Fock factorization, therefore certainly admits a factorizable vector.

On the other hand, $\FFF_{x_0'}$ corresponding to $\FFF\cap 2^{x_0'}$ is a complete nonatomic type $\I$  factorization of $\KK_0'$. To ease the notation we may and do assume at this point that $\FFF$ was atomless to begin with, forget about $x_0$, and seek to show that it admits a  factorizable vector. 

Some more vocabulary. A $Q\subset \FFF\backslash \{0_\KK\}$ having $x\land y=0_\KK$ for all $x\ne y$ from $Q$ we call  a partial partition of $\FFF$ (note: no restriction on the cardinality of $Q$). 
We say such $Q$ is factorizing if 
for each $q\in Q$ there exists a minimal non-zero projection $D\in q$ which decomposes into the product $D=AA'$ for some $A\in q\land x$ and some $A'\in q\land x'$ for all $x\in \FFF$. 

Consider now the collection 
\begin{equation*}
\mathfrak{P}:=\{Q\in 2^{\FFF\backslash \{0_\KK\}}:Q\text{ is a factorizing partial partition of $\FFF$}\}.
\end{equation*}
We partially order $\mathfrak{P}$ by inclusion, note that $\mathfrak{P}$ is non-empty (since $\emptyset \in \mathfrak{P}$) and that every linearly ordered subset of $\mathfrak{P}$ admits an upper bound, namely its union. By Zorn's lemma there is $P\in \mathfrak{P}$ maximal w.r.t. inclusion. 

We claim that $\lor P=1_\KK$. Suppose to the contrary. By completeness $\lor P\in \FFF$, hence there are non-zero Hilbert spaces $\KK_P$ and $\KK_{P'}$ for which, up to a unitary isomorphism $\Gamma_P$, $\KK=\KK_P\otimes \KK_{P'}$ and $\lor P=1_{\KK_P}\otimes 0_{\KK_{P'}}$. Since $\lor P\ne 1_\KK$, $\KK_{P'}$ is at least two-dimensional. The type $\I$ factorization $\FFF_{P'}$   of $\KK_{P'}$ got from $\FFF\cap 2^{(\lor P)'}$ via $\Gamma_P$ is atomless and complete. Since $1_{\KK_{P'}}\ne 0_{\KK_{P'}}$ we deduce existence of a countably infinite partition  $Z'$ of $\FFF_{P'}$, corresponding via $\Gamma_P^{-1}$ to a partial partition $Z$ of $\FFF$ having $\lor Z\subset (\lor P)'$, a fortiori $Z\cap P= \emptyset$. By \eqref{adiuvat:one} applied to the partition $Z'$ of $\FFF_{P'}$  and then by \eqref{adiuvat:two} with $\delta=\frac{1}{2}$ (say)
there is a non-empty (even infinite and moreover cofinite) subset $R$ of $Z$ such that $P\cup R$ is a factorizing partial partition of $\FFF$. This contradicts the maximality of $P$. We have thus established that $P$ is a partition of $\FFF$. 

We are now in the setting and notation of the  paragraph surrounding \eqref{naturally-transformed}, in particular we have access to $\Phi_P$ and the associated identifications. 

By the factorization property of $P$, for each $p\in P$, $\FFF_p$ admits a factorizable vector $\chi_p\in \KK_p$ of norm one, i.e. the copy $D_p$ of $\vert \chi_p\rangle\langle\chi_p\vert$ in $\otimes_{p\in P}\KK_p$ is, up to $\Theta_P$, a non-zero minimal projection of $p$  which decomposes into the product $D_p=AA'$ for some $A\in p\land x$ and some $A'\in p\land x'$ for all $x\in \FFF$. We improve on the family $(\chi_p)_{p\in P}$ as follows.  Since $\KK$ is separable, $P$ is countable. Referring again to \eqref{adiuvat:one} we get that $\sum_{p\in P}(1-\dd(\zeta_p;\FFF_p))<\infty$, in particular that the set \begin{equation*}
P':=\{p\in P:\dd(\zeta_p;\FFF_p)>0\}
\end{equation*}
 is cofinite in $P$. Seeing as how $P$, a fortiori $P'$ is countable, there exists a $\delta:P'\to (0,1)$ satisfying 
\begin{equation*}
\sum_{p\in P'}(1-\delta_p\dd(\zeta_p;\FFF_p))<\infty.
\end{equation*}
 Applying \eqref{adiuvat:two} we get a family $(\phi_p)_{p\in P'}$ having the property that for each $p\in P'$, $\phi_p$ is a factorizable vector of $\FFF_p$ verifying $\langle \zeta_p,\phi_p\rangle\geq \delta_p\dd(\zeta_p;\FFF_p)$ and $\Vert \phi_p\Vert\leq 1$. With this family in hand, if we set 
\begin{equation*}
\chi_p':=
\begin{cases}
\chi_p &\text{for } p\in P\backslash P'\\
\phi_p  & \text{for }p\in P'
\end{cases},
\end{equation*}
then 
\begin{enumerate}[(i)]
\item\label{property:1} for all $p\in P$, $\chi'_p$  is a factorizable vector of $\FFF_p$, 
\item\label{property:2} $\sum_{p\in P}\vert 1-\langle \zeta_p,\chi_p'\rangle\vert<\infty$ (due to $\langle \zeta_p,\chi_p'\rangle\geq \delta_p\dd(\zeta_p;\FFF_p)$ for $p\in P'$ and in virtue of $\sum_{p\in P'}(1-\delta_p\dd(\zeta_p;\FFF_p))<\infty$) and (hence) 
\item\label{property:3} $0< \prod_{p\in P}\vert\langle \zeta_p,\chi_p'\rangle\vert \leq \prod_{p\in P}\Vert \chi'_p\Vert\leq 1<\infty$ (since $\Vert \phi_p\Vert\leq 1$ for $p\in P'$,  $\Vert \chi_p \Vert=1$ for $p\in P\backslash P'$, and by Cauchy-Schwartz).
\end{enumerate}

We claim next that
\begin{equation*}
\omega:=\otimes_{p\in P} \chi'_p:=\lim_{F\in (2^P)_{\mathrm{fin}}}\otimes (\chi'\vert_F\cup \zeta\vert_{P\backslash F})
\end{equation*}
 is a well-defined non-zero vector of $\otimes_{p\in P}\KK_p$, the limit being w.r.t. the partial order of inclusion on $(2^P)_{\mathrm{fin}}$. 
To see why, take $F\subset G$ finite subsets of $P$, and evaluate
\begin{equation*}
\Vert \otimes(\chi'\vert_F\cup \zeta\vert_{P\backslash F})-\otimes(\chi'\vert_G\cup \zeta\vert_{P\backslash G})\Vert^2=\left(\prod_{f\in F}\Vert\chi'_f\Vert^2\right)\left(\left(-1+\prod_{f\in G\backslash F}\Vert \chi'_f\Vert^2\right)+2\Re\left(1-\prod_{f\in G\backslash F}\langle \zeta_f,\chi'_f\rangle\right)\right).
\end{equation*}
This expression is $\to 0$ as $F\uparrow P$ due to \ref{property:2}-\ref{property:3}. Thus the existence of the limit defining $\omega$ follows by completeness of $\KK$. Certainly $\omega$ is not zero, indeed $\Vert \omega \Vert=\prod_{p\in P}\Vert \chi'_p\Vert>0$. 

We may and do view $\omega$ also as a member of $\KK$, on passing through $\Theta_P$. We shall show that it is a factorizable vector. To this end take finally any $x\in \FFF$. Thanks to \ref{property:1} just above, for each $p\in P$, the copy $D'_p$ of $\vert \chi'_p\rangle\langle \chi'_p\vert$  in $\otimes_{p\in P}\KK_p$ is, up to $\Theta_P$, of the form $A_pA_p'$ for some $A_p\in p\land x$ and some $A_p'\in p\land x'$, which moreover by Corollary~\ref{corollary:factorizable-and-partition-of-unity} we may and do assume are projections. Then, still up to $\Theta_P$ and natural identifications,
\begin{equation*}
\vert \omega\rangle\langle \omega\vert=\stronglim_{F\in (2^P)_{\mathrm{fin}}}(\otimes_{p\in F}\vert \chi'_p\rangle\langle \chi'_p\vert)\otimes \mathbf{1}_{\otimes_{p\in P\backslash F}\KK_p}=\stronglim_{F\in (2^P)_{\mathrm{fin}}}\prod_{p\in F}A_pA_p'=\left(\stronglim_{F\in (2^P)_{\mathrm{fin}}}\prod_{p\in F}A_p\right)\left(\stronglim_{F\in (2^P)_{\mathrm{fin}}}\prod_{p\in F}A_p'\right),
\end{equation*}
s-lim being strong limits, the final two existing because they are even of noinincreasing nets of projections. But   $\stronglim_{F\in (2^P)_{\mathrm{fin}}}\prod_{p\in F}A_p\in x$ and $\stronglim_{F\in (2^P)_{\mathrm{fin}}}\prod_{p\in F}A_p'\in x'$, establishing that $\omega$ is a factorizable vector.
\end{proof}


\begin{theorem}\label{thm:main-two}
Suppose $\KK$ is separable, $\FFF$ complete. Then the factorizable vectors of $\FFF$ are total. 
\end{theorem}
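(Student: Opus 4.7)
The plan is to combine Proposition~\ref{thm:a-facorizable-vector}, which produces at least one factorizable vector, with Theorem~\ref{thm:main-for-noise-factorization}, which, once a unit is available, delivers totality under completeness.

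First I would apply Proposition~\ref{thm:a-facorizable-vector} to obtain a factorizable vector of $\FFF$, and after rescaling I may insist it has norm one; call it $\Omega$. Since factorizability of a non-zero $\omega$ is defined purely in terms of its normalization, this rescaling is harmless, and the pair $(\FFF,\Omega)$ is a bona fide unital factorization based on the separable Hilbert space $\KK$.

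Next I would observe that the completeness hypothesis on $\FFF$ makes condition \ref{classical:A} of Theorem~\ref{thm:main-for-noise-factorization} hold trivially: the complete unital factorization $\overline{\FFF}$ containing $(\FFF,\Omega)$ as a subfactorization can simply be taken to be $(\FFF,\Omega)$ itself. Invoking the equivalence \ref{classical:A}$\Leftrightarrow$\ref{classical:E} of Theorem~\ref{thm:main-for-noise-factorization}, I conclude that the multiplicative vectors of $(\FFF,\Omega)$ are total in $\KK$. By Proposition~\ref{proposition:multiplicative-factorizable}, every multiplicative vector is in particular a factorizable vector of $\FFF$ (namely one satisfying $\langle\Omega,\xi\rangle=1$), so the factorizable vectors a fortiori span a dense linear subspace of $\KK$.

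In short, the theorem is an immediate corollary of the work already done: the existence half comes from Proposition~\ref{thm:a-facorizable-vector}, which promotes $\FFF$ to a unital factorization, and the abundance half comes from the implication \ref{classical:A}$\Rightarrow$\ref{classical:E} in Theorem~\ref{thm:main-for-noise-factorization}, which spectral theory has already delivered. There is no real obstacle at this stage; the only point worth a moment's care is the verification that \ref{classical:A} is satisfied ``for free'' by taking $\overline{\FFF}:=\FFF$, rather than needing any strict enlargement — and this is immediate from the definitions.
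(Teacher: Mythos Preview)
Your proof is correct and follows essentially the same route as the paper: apply Proposition~\ref{thm:a-facorizable-vector} to obtain a norm-one factorizable vector $\Omega$, note that $(\FFF,\Omega)$ itself witnesses condition~\ref{classical:A} of Theorem~\ref{thm:main-for-noise-factorization}, invoke \ref{classical:A}$\Rightarrow$\ref{classical:E} to get totality of the multiplicative vectors, and then use Proposition~\ref{proposition:multiplicative-factorizable} to conclude totality of the factorizable vectors.
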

 \begin{proof}
 Combine Proposition~\ref{thm:a-facorizable-vector},  Theorem~\ref{thm:main-for-noise-factorization}, \ref{classical:A} $\Rightarrow$ \ref{classical:E}, and Proposition~\ref{proposition:multiplicative-factorizable}, in this order. 
 \end{proof}
 \begin{remark}
 Actually, the same proof shows that, under the assumption of Theorem~\ref{thm:main-two}, for any given  factorizable vector $\omega$ of $\FFF$ (which exists), the factorizable vectors $\xi$ of $\FFF$ for which $\langle\omega,\xi\rangle=1$ are already total.
\end{remark}
\begin{corollary}\label{cor:main}
Suppose $\KK$ is separable. 
The following are equivalent. 
\begin{enumerate}[(A)]
\item \label{characterization:factorizations:A} $\FFF$ is complete.
\item \label{characterization:factorizations:C}$\FFF$ is isomorphic to a Fock factorization. 
\end{enumerate}
\end{corollary}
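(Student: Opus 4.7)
The plan is to dispatch the two implications separately. For \ref{characterization:factorizations:C} $\Rightarrow$ \ref{characterization:factorizations:A}, it suffices to recall that every Fock factorization is complete (noted in the paragraph following \eqref{equation:scalar-product-in-exponential-Fock-space}), and completeness is manifestly preserved by isomorphism.

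For the substantive direction \ref{characterization:factorizations:A} $\Rightarrow$ \ref{characterization:factorizations:C}, I first apply Proposition~\ref{thm:a-facorizable-vector} to obtain a norm one factorizable vector $\Omega$ of $\FFF$. This turns $(\FFF,\Omega)$ into a (complete) unital factorization, making the tools of Section~\ref{section:fock-structure-completneess} available with $\BBB:=\FFF$. Taking $\overline{\BBB}:=\FFF$ in \ref{classical:A} of Theorem~\ref{thm:main-for-noise-factorization}, the implication \ref{classical:A} $\Rightarrow$ \ref{classical:D} yields that $(\FFF,\Omega)$ is of Fock type; unpacking the proof of Corollary~\ref{CD}, the inverse of the unitary isomorphism of Proposition~\ref{proposition:fock-spectrum} sends $\Omega$ to $1$ and identifies $\FFF$ with the subfactorization
\begin{equation*}
\left\{F_{\int^\oplus\GG_s\mu^{(1)}(\dd s)}(S_x\cap \{K=1\}):x\in \FFF\right\}
\end{equation*}
of $\mathrm{Fock}(\int^\oplus\GG_s\mu^{(1)}(\dd s))$.

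It remains to lift ``subfactorization'' to ``full factorization''. The route I intend to take is the remark immediately following Corollary~\ref{CD}: precisely because $\FFF$ is already complete, the indexation $x\mapsto S_x\cap \{K=1\}$ ranges over the whole measure algebra of $\mu^{(1)}$, so the subfactorization above exhausts $\mathrm{Fock}(\int^\oplus\GG_s\mu^{(1)}(\dd s))$, and $\FFF$ is thereby isomorphic to a Fock factorization.

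The principal obstacle is exactly this surjectivity. The map $x\mapsto S_x\cap \{K=1\}$ is a Boolean algebra monomorphism into the measure algebra of $\mu^{(1)}$, but its Boolean complement $S_x\cap \{K=1\}\mapsto S_{x'}\cap \{K=1\}$ need not coincide with the set-theoretic one (as the model Fock computations of Subsection~\ref{subsection:spectrum-of-Fock} already illustrate). It is the completeness of $\FFF$, coupled with the generation property of $\{S_x:x\in \FFF\}$ up to $\mu$-null sets and the separability of $\KK$, that drives the argument as in the cited remark.
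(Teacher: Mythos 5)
Your proof of \ref{characterization:factorizations:C} $\Rightarrow$ \ref{characterization:factorizations:A} matches the paper's. For the converse the paper takes a shorter external route: it combines Theorem~\ref{thm:main-two} (totality of the factorizable vectors, itself obtained from Proposition~\ref{thm:a-facorizable-vector} and Theorem~\ref{thm:main-for-noise-factorization}) with the Araki--Woods structure theorems \cite[Theorems~4.1 and~6.1]{araki-woods}, which deliver the identification with a \emph{full} discrete, resp.\ continuous, Fock factorization once totality is known. Your route is genuinely different and more self-contained: you stay inside the spectral machinery of Sections~\ref{section:spectrum}--\ref{section:fock-structure-completneess}, land on a unital \emph{sub}factorization of $\mathrm{Fock}(\int^\oplus\GG_s\mu^{(1)}(\dd s))$ via the proof of Corollary~\ref{CD}, and then argue exhaustion. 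What your approach buys is a uniform treatment of the mixed atomic/nonatomic case and independence from \cite[Theorem~6.1]{araki-woods}; what it costs is that the exhaustion step must actually be proved, and here your write-up stops short.

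Concretely: you correctly isolate surjectivity of $x\mapsto S_x\cap\{K=1\}$ onto the measure algebra of $\mu^{(1)}$ as the crux, but you then defer it to the remark following Corollary~\ref{CD}, which is itself an unproved assertion in the paper and, read literally, only claims an \emph{injection} of the factors into the measure algebra. The missing argument is short and you should supply it: (i) on $\{K=1\}$ one has $S_{x'}=\{K=1\}\setminus S_x$ and $S_{x\land y}=S_x\cap S_y$ a.e.-$\mu^{(1)}$, so $x\mapsto S_x\cap\{K=1\}$ is a Boolean homomorphism into the measure algebra; (ii) for an $\uparrow$ sequence $(x_n)_{n\in\mathbb{N}}$ in $\FFF$ one has $\lor_n x_n\in\FFF$ by completeness and $\phi_{x_n}\uparrow\phi_{\lor_n x_n}$, whence $S_{\lor_n x_n}=\cup_n S_{x_n}$ a.e.-$\mu$; so the image is a $\sigma$-complete Boolean subalgebra of the measure algebra; (iii) the image generates, since $\{S_x:x\in\FFF\}$ generates $\Sigma$ up to $\mu$-trivial sets; hence the image is the whole measure algebra, and the subfactorization of your display is all of $\mathrm{Fock}(\int^\oplus\GG_s\mu^{(1)}(\dd s))$. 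With that paragraph added your proof is complete; without it, the key step rests on an unsubstantiated citation.
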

\begin{remark}
``The'' Fock factorization of Corollary~\ref{cor:main} is continuous or discrete respectively as $\FFF$ is atomless or atomic (it is both in the trivial case when $\KK$ is one-dimensional). The assumption that $\FFF$ is type $\I$ does not follow from completeness: in \cite[p.~183]{araki-woods} is described a complete atomic factorization of a non-zero separable Hilbert space not all of whose element are type $\I$ factors (some are type $\mathrm{II}$).
\end{remark}
\begin{proof}
A Fock factorization is complete so \ref{characterization:factorizations:C} implies \ref{characterization:factorizations:A}. The converse follows from Theorem~\ref{thm:main-two} and \cite[Theorems~4.1 and~6.1]{araki-woods}. 
\end{proof}
\bibliographystyle{plain}
\bibliography{QP}
\end{document}